\newcommand{\plim}{\varprojlim}
\newcommand{\mcal}{\mathcal}
\newcommand{\mbf}{\mathbf}
\newcommand{\mfrak}{\mathfrak}
\newcommand{\mbb}{\mathbb}
\newcommand{\mrm}{\mathrm}
\newcommand{\vphi}{\varphi}
\newcommand{\aet}{\mathrm{\acute{e}t}}
\newcommand{\cO}{\mathcal{O}}
\newtheorem{theorem}{Theorem}[section]
\newtheorem{corollary}[theorem]{Corollary}
\newtheorem{lemma}[theorem]{Lemma}
\newtheorem{proposition}[theorem]{Proposition}
\theoremstyle{definition}
\newtheorem{definition}[theorem]{Definition}
\newtheorem{remark}[theorem]{Remark}
\newtheorem*{claim}{Claim}
\newtheorem*{question}{Question}
\newtheorem*{acknowledgments}{Acknowledgments}
\title{Kummer-faithfulness over $p$-adic fields}
\author{Yoshiyasu Ozeki\footnote{
Faculty of Science, Kanagawa University,
3-27-1 Rokkakubashi, Kanagawa-ku, Yokohama-shi, Kanagawa 221-8686, JAPAN
\endgraf
e-mail: {\tt ozeki@kanagawa-u.ac.jp}} 
}
\begin{document}
\maketitle

\begin{abstract}
The notion of a Kummer-faithful field, 
defined by Mochizuki, is expected as one of suitable base fields for anabelian geometry. 
In this paper, we study Kummer-faithfulness  
for algebraic extension fields of $p$-adic fields. 
We show that Kummer-faithfulness for such fields are deeply related with various finiteness properties on 
torsion points of (semi-)abelian varieties. For example,
a Galois extension $K$ of a $p$-adic field is Kummer-faithful with finite residue field 
if and only if, for any finite extension $L$ of $K$ and any abelian variety over $L$,
its $L$-rational torsion subgroup is finite.
In addition, we study Kummer-faithfulness for Lubin-Tate extension fields.
\end{abstract}

      \tableofcontents


\section{Introduction}
Anabelian geometry is an area of arithmetic geometry that studies 
how much information concerning the geometry of certain geometric objects, 
so-called ``anabelian varieties", 
can be reconstructed from various data associated with their 
arithmetic fundamental groups.
The philosophy of anabelian geometry was first suggested 
by Grothendieck in {\it Esquisse d'un Programme} and 
{\it Brief an G. Faltings} (cf. \cite{ScLo97}), and 
he proposed that anabelian geometry
should be considered over fields that are 
finitely generated over their prime fields.
Nowadays, Grothendieck's original conjecture 
for hyperbolic curves over finitely generated fields over 
the prime field has been proved by 
Nakamura (\cite{Nak90a}, \cite{Nak90b}), 
Tamagawa (\cite{Tam97}), Mochizuki (\cite{Moc96}, \cite{Moc07})
and Stix (\cite{Sti02a}, \cite{Sti02b}).

However, led by Mochizuki, 
it has been revealed that anabelian geometry 
can be developed over a broader class of fields.
{\it Kummer-faithful fields}, 
defined by Mochizuki \cite{Moc15} and the main topic of this paper,
form one such class. 
A perfect field $K$ is Kummer-faithful if,
for every finite extension $L$ of $K$ and every semi-abelian variety $A$ over $L$,
the Mordell-Weil group $A(L)$ has a trivial divisible part;
see Definition \ref{Def:KF}.
(Precisely, in \cite{Moc15}, 
Kummer-faithful fields are assumed to be of characteristic zero. 
In \cite{Hos17}, Kummer-faithful fields are defined also for 
positive characteristic cases.) 
Kummer-faithfulness asserts the injectivity of the Kummer map associated with semi-abelian varieties; 
thus, roughly speaking, Kummer-faithfulness guarantees that 
``Kummer theory for semi-abelian varieties works effectively''.
Typical examples of Kummer-faithful fields are 
finitely generated fields over $\mbb{Q}$,
which are related to the Grothendieck's original setting.
Moreover, {\it sub-$p$-adic fields} are Kummer-faithful
(here, a field is sub-$p$-adic
if it is isomorphic to a finitely generated field over 
$\mbb{Q}_p$).
As one of the important results related to the Grothendieck conjecture 
over Kummer-faithful fields, 
Hoshi proved in \cite{Hos17} that a ``point-theoretic'' and 
``Galois-preserving'' isomorphism  between 
the \'etale fundamental groups of affine hyperbolic 
curves over Kummer-faithful fields arises 
from an isomorphism of schemes.

A study of various properties of Kummer-faifthfulness, together with 
the construction of examples of Kummer-faithful fields,  
is important for understanding the range of fields 
over which anabelian geometry can be developed. 
Moreover, in recent years, studying these topics  has become a research interest
in its own right.
Let $k$ be a number field and $K/k$ an algebraic extension.
In \cite[Corollary 2.15]{OzTg22}, Taguchi and the author studied  Kummer-faithfulness of $K$
in terms of ramification theory.
We showed that 
$K$ is Kummer-faithful 
if $K/k$ is a  Galois extension with ``finite maximal ramification break everywhere'' (cf.\ Definition 2.14 of 
{\it loc.\ cit.}).
As a typical example, the field 
obtained by adjoining to $\mbb{Q}$ all $\ell$-th roots of unity for all prime $\ell$ is Kummer-faithful.
\if0
Moreover, they showed the following:
Let $g>0$ be an integer and $\mbf{m}=(m_p)_p$ a family of non-negative integers where $p$ ranges over the prime numbers. Let $k_{g,\mbf{m}}$ be the extension field of $k$ obtained by adjoining all coordinates of elements of $B[p^{m_p}]$ for all semi-abelian varieties $B$ over $k$ of dimension at most $g$ and all prime numbers $p$. Then,  $k_{g,\mbf{m}}$  is Kummer-faithful.
\fi
Ohtani \cite{Oht22} and Asayama-Taguchi \cite{AsTg24} 
studied Kummer-faithfulness 
for extremely large fields. 
Let $G$ be the absolute Galois group of $k$ and  
$e$ a positive integer. 
For $\sigma =(\sigma_1,\dots, \sigma_e) \in G^e$
denote by $\overline{k}(\sigma)$  
the fixed field of $\sigma$ in $\overline{k}$, 
and by $\overline{k}[\sigma]$ the maximal Galois subextension 
of $k$ in $\overline{k}(\sigma)$. 
It is known that
$\overline{k}[\sigma]$ is
Kummer-faithful for almost all $\sigma \in G^e$
(in terms of the (normalized) Haar measure); 
see \cite[Corollary 1]{Oht22} for the case where $e\ge 2$, and \cite[Theorem 5.3]{AsTg24} for the general case.
Moreover, if $e\ge 2$, 
$\overline{k}(\sigma)$ is Kummer-faithful for almost all $\sigma \in G^e$ (cf. \cite[Theorem 5.2]{AsTg24}).
The structure of the Mordell-Weil group of (semi-)abelian varieties over $\overline{k}(\sigma)$ or $\overline{k}[\sigma]$ satisfies
interesting properties; see Section 3 of \cite{AsTg24}
for more information.
On the other hand, 
Murotani showed in \cite{Mur23b} that an algebraic extension field $\mbb{F}$ over $\mbb{F}_p$ is Kummer-faithful if and only if 
the absolute Galois group of $\mbb{F}$ is isomorphic to 
$\hat{\mbb{Z}}$.
It should be notable that he also studied Kummer-faithfulness for higher local fields
(cf.\ \cite{Mur23a}).

In this paper, we study Kummer-faithfulness for   
algebraic extension fields of
$p$-adic fields (= finite extension fields of $\mbb{Q}_p$).
Since sub-$p$-adic fields are Kummer-faithful,  
we know that $p$-adic fields are Kummer-faithful
(but this can be checked immediately from the main theorem of \cite{Mat55}).
If we restrict our attention to tamely ramified (or unramified)
Galois extensions $K$ of some $p$-adic field, 
we will see that Kummer-faithfulness has a simple interpretation; 
in fact, for such $K$, 
$K$ is Kummer-faithful if and only if $K/\mbb{Q}_p$
is quasi-finite (see Corollaries \ref{unram} and \ref{tame}).
Thus our main interest is Kummer-faithfulness for
algebraic extensions of $p$-adic fields with infinite wild ramification.
For this, we focus on Lubin-Tate
extension fields of $p$-adic fields.
For a power $q$  of $p$, 
we say that $\alpha$ is a {\it Weil $q$-integer} if
$|\iota(\alpha)|=\sqrt{q}$ 
for every embedding $\iota\colon \mbb{Q}(\alpha)\hookrightarrow \mbb{C}$. 
We denote by $W(q)$ the set of Weil $q$-integers.

\begin{theorem}[A part of Theorem \ref{refined:LT}]
\label{LT}
Let $k$ be  a $p$-adic field with residue field $\mbb{F}_q$ 
and $\pi$ a uniformizer of $k$.
Denote by $k_{\pi}$ the Lubin-Tate extension field of $k$ associated with $\pi$.
\begin{itemize}
\item[{\rm (1)}]  
If $k$ is a Galois extension of $\mbb{Q}_p$ and $k_{\pi}$ is not Kummer-faithful, 
then either of the following holds.
\begin{itemize}
\item[{\rm (a)}] $q^{-1}\mrm{Nr}_{k/\mbb{Q}_p}(\pi)\in \mu_{p-1}$.
\item[{\rm (b)}] For some $(r_{\sigma})_{\sigma \in \Gamma_k}$
with $r_{\sigma}\in \{0,1\}$, 
it holds $\prod_{\sigma \in \Gamma_k} \sigma \pi^{r_{\sigma}}\in W(q)$.
Here,  $\Gamma_k$ is the set of $\mbb{Q}_p$-algebra embeddings $k\hookrightarrow \overline{\mbb{Q}}_p$.
\end{itemize}

\item[{\rm (2)}] 
If  $q^{-1}\mrm{Nr}_{k/\mbb{Q}_p}(\pi)\in \mu_{p-1}$ or 
$\mrm{Nr}_{k/\mbb{Q}_p}(\pi)\in   W(q)$,
then $k_{\pi}$ is not Kummer-faithful.
\end{itemize}
\end{theorem}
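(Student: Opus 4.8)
The plan is to read off Kummer‑faithfulness of $k_\pi$ from the finiteness criterion recalled in the abstract and then to analyse $p$‑power torsion through the theory of complex multiplication. Since $k_\pi/k$ is totally ramified, $k_\pi$ has finite residue field $\mathbb{F}_q$ and that criterion applies: $k_\pi$ is Kummer‑faithful if and only if $A(L)_{\mathrm{tors}}$ is finite for every finite $L/k_\pi$ and every abelian variety $A$ over $L$. Descending $A$ to a $p$‑adic subfield and reducing modulo the maximal ideal, the prime‑to‑$p$ part of $A(L)_{\mathrm{tors}}$ embeds into the finite group of points of the special fibre of a Néron model, so one is reduced to deciding when some $A/L$ has infinite $p$‑power torsion. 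After a further finite extension we may take $A$ semistable; then $A(L)[p^\infty]$ is infinite precisely when either (i) the toric part of the connected special fibre contributes infinitely many $p$‑power roots of unity to $L$, or (ii) the formal group $\widehat{B}$ of the abelian part $B$ (which has good reduction) has infinite $p$‑power torsion in $\mathfrak{m}_L$.

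For (2) I would treat the two hypotheses in turn. If $u:=q^{-1}\mathrm{Nr}_{k/\mathbb{Q}_p}(\pi)\in\mu_{p-1}$ then, since $\mathrm{Gal}(k^{\mathrm{ab}}/k_\pi)$ is the closure of $\langle\mathrm{rec}_k(\pi)\rangle$ and $\mathrm{rec}_k(\pi)$ acts on $\mu_{p^\infty}$ through $\mathrm{rec}_{\mathbb{Q}_p}(\mathrm{Nr}_{k/\mathbb{Q}_p}(\pi))$, i.e.\ through the torsion unit $u^{\pm1}\in\mathbb{Z}_p^\times$, the field $k_\pi$ contains $\mathbb{Q}_p(\mu_{p^\infty})$ when $u=1$ and an infinite subextension of it (the fixed field of $\langle u\rangle$, which contains the cyclotomic $\mathbb{Z}_p$‑extension) in general; invoking that these fields are not Kummer‑faithful, together with the elementary fact that any algebraic overfield of a non‑Kummer‑faithful field is non‑Kummer‑faithful, settles this case. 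If instead $\alpha:=\mathrm{Nr}_{k/\mathbb{Q}_p}(\pi)\in W(q)$, I would first observe that $\alpha$ generates a CM field inside $\mathbb{Q}_p$, realise $k$ as the completion $E_v$ of a CM field $E$ at a place $v\neq\bar v$ above $p$ (built by globalising $k$ to a totally real field and adjoining an imaginary square root splitting $v$), pick a CM type $\Phi$ on $E$ with exactly one member lying above $v$, and use Honda--Tate to produce an abelian variety $B$ over a finite field with $\mathcal{O}_E$‑multiplication whose Frobenius restricts on $E_v=k$ to $\pi$ up to a root of unity — it is here that $\alpha\in W(q)$ enters, guaranteeing via the Shimura--Taniyama formula that such a $B$ exists. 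By Lubin--Tate rigidity the one‑dimensional formal $\mathcal{O}_k$‑module $B[v^\infty]$ has, along any lift of $B$ to an abelian scheme over an appropriate $p$‑adic discrete valuation ring, the unique lift $\mathcal{F}_\pi$ (up to an unramified twist); base‑changing the lift to a finite extension of $k_\pi$ we obtain an abelian variety whose formal group contains all $\pi$‑power torsion of $\mathcal{F}_\pi$, and since this torsion lies in $k_\pi$ we conclude that $k_\pi$ is not Kummer‑faithful.

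For (1) I would run this backwards. Alternative (i) forces $k_\pi$ to contain an infinite subextension of $\mathbb{Q}_p(\mu_{p^\infty})$, whence the class‑field‑theoretic computation above makes $u=q^{-1}\mathrm{Nr}_{k/\mathbb{Q}_p}(\pi)$ a torsion unit, i.e.\ alternative (a). Alternative (ii) requires locating a Lubin--Tate factor in $\widehat{B}$: from the infinitude of the $p$‑power torsion of $\widehat{B}(\mathfrak{m}_L)$ one gets a $G_L$‑fixed line in $V_p\widehat{B}$, and as $L$ is a finite — hence deeply ramified — extension of the abelian extension $k_\pi/k$, a Sen‑theoretic argument with Lubin--Tate characters shows this factor is $\mathcal{F}_{\pi'}$ for a uniformizer $\pi'$ of $k$ whose cyclic subgroup of $k^\times$ meets $\langle\pi\rangle$ in finite index, so that $\pi'=\pi\zeta$ for a root of unity $\zeta\in k$; then $\overline{\mathcal{F}}_{\pi'}$ is a factor of the $p$‑divisible group of the reduction of $B$, and tracing this through Honda--Tate and Shimura--Taniyama writes the Frobenius of $B$ as a partial product $\prod_{\sigma}\sigma(\pi)^{r_\sigma}$ with $r_\sigma\in\{0,1\}$ lying in $W(q)$, which is alternative (b). The crux in both directions is this passage through complex multiplication: pinning down exactly when the formal $\mathcal{O}_k$‑module attached to a CM abelian variety over a finite field is the Lubin--Tate group of $\pi$ itself and not of another uniformizer — precisely what the conditions on $\mathrm{Nr}_{k/\mathbb{Q}_p}(\pi)$ and on the partial products record — together with, in (1), the Sen‑theoretic identification of the Lubin--Tate factor.
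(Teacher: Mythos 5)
Your overall architecture (reduce via the finiteness criterion to $p$-power torsion, handle the root-of-unity case by class field theory, and use Honda--Tate/CM theory for the rest) matches the paper's, and the first hypothesis of (2) as well as the passage from ``infinitely many $p$-power roots of unity'' to alternative (a) in (1) are handled correctly. But the core CM steps contain genuine gaps. In (2), you ask for an abelian variety ``whose Frobenius restricts on $E_v=k$ to $\pi$ up to a root of unity.'' The Frobenius of an abelian variety over $\mbb{F}_q$ is an algebraic number, while $\pi$ is an arbitrary uniformizer of $k$ and in general is transcendental over $\mbb{Q}$; only its norm $\mrm{Nr}_{k/\mbb{Q}_p}(\pi)$ is assumed algebraic (it lies in $W(q)$). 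So no choice of CM datum can make the Frobenius equal $\pi$ up to a root of unity, and the Shimura--Taniyama formula only controls valuations, not the uniformizer itself. Relatedly, ``Lubin--Tate rigidity\ldots the unique lift $\mcal{F}_\pi$ up to an unramified twist'' conceals exactly the point at issue: $\mcal{F}_\pi$ and $\mcal{F}_{\pi'}$ for distinct uniformizers differ precisely by an unramified twist, and an infinite-order unramified twist destroys rationality of the torsion over finite extensions of $k_\pi$ (indeed $\chi_{\pi'}$ is trivial on $G_{k_{\pi'}}$, not on $G_{k_\pi}$). You must prove the twist is of finite order. The paper does this by computing Conrad's crystalline invariant: the CM character $\psi_i$ on the relevant factor $V_i$ of $V_p(B)$ satisfies $\Phi_K(\psi_i)=\mrm{Nr}_{k/\mbb{Q}_p}(\pi)^{f_{K/k}}=\Phi_K(\mrm{Nr}_{k/\mbb{Q}_p}\circ\chi_\pi)$, and then the Hodge--Tate weight rigidity (Lemma \ref{Phi} (5)) forces $\psi_i=\mrm{Nr}_{k/\mbb{Q}_p}\circ\chi_\pi$ exactly, whence triviality on $G_{Kk_\pi}$. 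Your sketch never supplies a substitute for this identification.

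In (1) the analogous gap reappears. Your Sen-theoretic step identifies the Galois character on the fixed line as a single Lubin--Tate character $\chi_{\pi'}$ with $\pi'=\pi\zeta$, but the correct structural statement (Lemma 2.5 of \cite{Oze23}, which is where the hypothesis that $k/\mbb{Q}_p$ is Galois enters) is that the character is a \emph{partial-norm product} $\prod_{\sigma\in\Gamma_k}\sigma\circ\chi_\pi^{r_\sigma}$ with $r_\sigma\in\{0,1\}$ --- the exponents $\{0,1\}$ coming from the Hodge--Tate weights of an abelian variety --- and this is not a Lubin--Tate character of $k$ unless all $r_\sigma$ agree. Moreover, the conclusion that $\hat{\pi}=\prod_\sigma\sigma\pi^{r_\sigma}$ is a Weil $q$-integer is not obtained from Shimura--Taniyama: the paper computes $\Phi_K$ of the partial product as an eigenvalue of the crystalline Frobenius on $D^K_{\mrm{cris}}$, uses the crystalline--\'etale comparison to identify the characteristic polynomial with that of the geometric Frobenius on $\ell$-adic cohomology of the special fibre, and invokes the Weil conjectures. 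Without that comparison step your argument does not produce the archimedean bound $|\iota(\hat{\pi})|=\sqrt{q}$. Finally, note that your semistable reduction in the preliminary step must also dispose of the lattice part $N$ of the rigid uniformization (torsion of $A$ mapping nontrivially to $N\otimes\mbb{Q}_p/\mbb{Z}_p$), which is where the Galois hypothesis on $L/K$ is used in the paper's Lemma \ref{ellfine}.
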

Applying Theorem \ref{LT} with $k=\mbb{Q}_p$, 
we obtain 
\begin{corollary}
\label{LT:k=Qp}
Assume $k=\mbb{Q}_p$. Then the following are equivalent.
\begin{itemize}
\item[{\rm (i)}] $k_{\pi}$ is Kummer-faithful.
\item[{\rm (ii)}]$p^{-1}\pi\notin \mu_{p-1}$ and $\pi\not \in W(p)$.
\end{itemize}
\end{corollary}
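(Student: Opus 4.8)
The plan is to deduce the corollary purely by specializing Theorem \ref{LT} to the case $k=\mathbb{Q}_p$, where all of the auxiliary data in that theorem degenerate. First I would record the relevant simplifications: the residue field of $\mathbb{Q}_p$ is $\mathbb{F}_p$, so $q=p$; the norm map $\mathrm{Nr}_{\mathbb{Q}_p/\mathbb{Q}_p}$ is the identity, so $\mathrm{Nr}_{k/\mathbb{Q}_p}(\pi)=\pi$; the set $\Gamma_k$ of $\mathbb{Q}_p$-algebra embeddings $\mathbb{Q}_p\hookrightarrow\overline{\mathbb{Q}}_p$ consists of the identity alone; and $\mathbb{Q}_p$ is trivially a Galois extension of $\mathbb{Q}_p$, so both parts (1) and (2) of Theorem \ref{LT} are available.

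For the implication (i)$\Rightarrow$(ii) I would argue by contraposition using Theorem \ref{LT}(2). If (ii) fails, then $p^{-1}\pi\in\mu_{p-1}$ or $\pi\in W(p)$; since $\mathrm{Nr}_{k/\mathbb{Q}_p}(\pi)=\pi$ and $q=p$, this is exactly the hypothesis of Theorem \ref{LT}(2), which then forces $k_\pi$ to be non-Kummer-faithful, i.e.\ (i) fails.

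For (ii)$\Rightarrow$(i) I would again argue by contraposition, this time via Theorem \ref{LT}(1). Assume $k_\pi$ is not Kummer-faithful; then case (a) or case (b) of Theorem \ref{LT}(1) holds. Case (a) reads $p^{-1}\pi\in\mu_{p-1}$, contradicting the first half of (ii). In case (b), since $\Gamma_k=\{\mathrm{id}\}$ the product $\prod_{\sigma\in\Gamma_k}\sigma\pi^{r_\sigma}$ collapses to $\pi^{r}$ with $r\in\{0,1\}$; the value $r=0$ is excluded because $1\notin W(p)$ (indeed $|1|=1\neq\sqrt{p}$), so $r=1$ and $\pi\in W(p)$, contradicting the second half of (ii). Hence (ii) forces $k_\pi$ to be Kummer-faithful.

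Since this is a pure specialization, there is no serious obstacle beyond bookkeeping: the only point meriting a word of care is the exclusion of the trivial exponent $r=0$ in case (b), which is dispatched by the elementary remark that $1$ is not a Weil $p$-integer.
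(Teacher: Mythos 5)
Your proof is correct and is exactly the intended argument: the paper derives the corollary simply by specializing Theorem \ref{LT} to $k=\mbb{Q}_p$, where $q=p$, $\mrm{Nr}_{k/\mbb{Q}_p}=\mrm{id}$ and $\Gamma_k=\{\mrm{id}\}$, so that conditions (a)/(b) of part (1) collapse to the negation of (ii) and part (2) gives the converse. Your extra remark ruling out the exponent $r=0$ via $1\notin W(p)$ is the one small point of care, and you handle it correctly.
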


Similar results related to assertion (1) of Theorem \ref{LT} have already been studied in 
\cite[Theorem 1.1]{Oze23}, 
and we will essentially follow the same arguments provided there in our proof.
Assertion (2), which can be seen as 
a partial converse of (1), 
is not studied in {\it loc.\ cit}.
In Section \ref{nonKE:example},
we construct an example of non-Kummer faithful 
$k_{\pi}$ such that the assumptions in Theorem \ref{LT} (2) do not hold but (b) in Theorem \ref{LT} (1) holds. 
The author has a slight hope that  $k_{\pi}$ is not Kummer-faithful
if and only if either (a) or (b) in Theorem \ref{LT} (1) 
holds.

On the other hand,
in the $p$-adic setting, 
thanks to the theory of Tate curves and non-archimedian rigid uniformization theorems, we can show that  
Kummer-faithfulness is equivalent to 
certain  finiteness properties of torsion points of abelian varieties.
\begin{theorem}[Corollary of Theorem \ref{KF:char1}]
\label{Intro:KF}
Let $K$ be a Galois extension of a $p$-adic field.
\begin{itemize}
\item[{\rm (1)}] 
The following are equivalent.
\begin{itemize}
\item[{\rm (a)}] $K$ is Kummer-faithful.
\item[{\rm (b)}] Any finite extension of $K$ has
only finitely many $\ell$-power roots of unity for every prime $\ell$, and 
the group $A(L)[p^{\infty}]$ is finite
for any finite extension $L/K$ 
and any abelian variety $A/L$ with good reduction.
\end{itemize}
\item[{\rm (2)}] 
The following are equivalent.
\begin{itemize}
\item[{\rm (a)}] $K$ is Kummer-faithful with finite residue field.
\item[{\rm (b)}] The torsion subgroup of $A(L)$ is finite
for any finite extension $L/K$ 
and any abelian variety $A/L$.
\end{itemize}
\end{itemize}
\end{theorem}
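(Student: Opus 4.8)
The plan is to analyse, for an algebraic extension $F$ of $\mbb{Q}_p$ and a semi-abelian variety $A/F$, exactly when $A(F)$ has trivial divisible part $A(F)_{\mrm{div}}$, and then read off both statements from the definition of Kummer-faithfulness (which ranges over all finite extensions of $K$ and all semi-abelian varieties over them). Three reductions organise the analysis. First, from an extension $0\to T\to A\to B\to 0$ of an abelian variety by a torus one gets, by pushing a divisible subgroup of $A(F)$ into $B(F)$ and intersecting with $T(F)$, that $A(F)_{\mrm{div}}\neq 0$ forces $T(F)_{\mrm{div}}\neq 0$ or $B(F)_{\mrm{div}}\neq 0$; splitting $T$ over a finite extension reduces the torus case to $\mbb{G}_m$, i.e.\ to divisibility in $F^{\times}$. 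Second, semistable reduction together with Raynaud's rigid-analytic uniformization of abelian varieties with semistable reduction gives, after a finite extension $F'/F$, an exact sequence $0\to M\to G(\overline{F})\to B(\overline{F})\to 0$ of $G_{F'}$-modules with $M\cong\mbb{Z}^{r}$ and $G$ an extension of an abelian variety $B'$ with good reduction by a split torus; enlarging $F'$ we may assume $M$ has trivial Galois action, so $H^1(G_{F'},M)=0$ and $B(F')=G(F')/M$. Third, for $B'$ with good reduction the reduction sequence $0\to\hat B'(\mfrak m_{F'})\to B'(\cO_{F'})\to B'(k_{F'})\to 0$ (surjective since $\cO_{F'}$ is henselian) reduces the question to divisibility in the formal group and in $B'(k_{F'})$. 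I would also record the Pontryagin-duality fact that an infinite subgroup of $(\mbb{Q}_\ell/\mbb{Z}_\ell)^{d}$ has nontrivial divisible part, so that $A(F)[\ell^\infty]$ is finite whenever it has trivial divisible part.

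The implications $\mathrm{(a)}\Rightarrow\mathrm{(b)}$ of (1) are then immediate: Kummer-faithfulness forces $F^{\times}_{\mrm{div}}=0$ and $B(F)_{\mrm{div}}=0$ for all finite $F/K$, whence in particular $\mu_{\ell^\infty}(F)$ is finite for every $\ell$ and $B(F)[p^\infty]$ is finite. The substance is the converse of (1): assuming (b)(i)--(b)(ii) one walks the reductions backwards and shows that a nonzero divisible subgroup of $B(F')$, of $F'^{\times}$, of $\hat B'(\mfrak m_{F'})$, or of $B'(k_{F'})$ always produces, in some finite extension of $K$, either an infinite group of $\ell$-power roots of unity or an infinite $p$-primary torsion group of an abelian variety with good reduction. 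For the torsion part of such a divisible subgroup one uses the uniformization sequence together with: the toric part of $G$ gives roots of unity directly; a nonzero divisible subgroup of $B'(k_{F'})[\ell^\infty]$ with $\ell\neq p$ forces $\mu_{\ell^\infty}$ to be infinite in a finite extension, via the Weil pairing on $\bar B'[\ell^{n}]$ after passage through the reduction map; a $p$-power torsion class coming from the lattice $M\otimes\mbb{Q}_\ell/\mbb{Z}_\ell$ likewise forces, via the Tate parametrization, infinitely many $\ell$-power roots of unity in the Galois closure; and the $\ell=p$ good-reduction case is exactly (b)(ii). For the torsion-free divisible part, a nonzero $\mbb{Q}$-subspace of $F'^{\times}$, of $\hat B'(\mfrak m_{F'})$, or (through the uniformization) of $B(F')$ consists of systematically divisible elements, and here the hypothesis that $K/\mbb{Q}_p$ is \emph{Galois} is essential: it converts a compatible system of $n$-th roots of an element into $n$-th roots of unity inside the Galois closure --- a finite extension of $K$ --- contradicting (b)(i); and, using Galois-ness again, the value group of $F'^{\times}$ cannot be divisible, since that would make $\mu_q\subseteq k_{F'}$ for every prime $q\neq p$, hence $\overline{\mbb{F}}_p\subseteq k_{F'}$ and $\mu_{\ell^\infty}(F')$ infinite. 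This is the content of Theorem \ref{KF:char1}.

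Part (2) follows from part (1) by two finite-residue-field arguments. For $\mathrm{(b)}\Rightarrow\mathrm{(a)}$: if some finite $L/K$ had infinite residue field, an elliptic curve with good reduction over $\mbb{Q}_p$ would already have $E(k_L)$ infinite --- an increasing union of finite groups of unbounded order --- so by Hensel its prime-to-$p$ torsion lifts to an infinite subgroup of $E(L)$, contradicting (2)(b); and if $\mu_{\ell^\infty}(L)$ were infinite, a Tate curve over $L$ would have infinite rational torsion, again contradicting (2)(b). Hence $k_K$ is finite and the hypotheses of (1)(b) hold, so by (1) the field $K$ is Kummer-faithful with finite residue field. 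For $\mathrm{(a)}\Rightarrow\mathrm{(b)}$: Kummer-faithfulness makes each $A(L)[\ell^\infty]$ finite by the duality fact, and the Weil bounds applied to the special fibre of the Néron model of $A_{L'}$ over the finite field $k_{L'}$ (a finite group, extended by an abelian variety over a finite field times a torus over a finite field, hence finite) bound the prime-to-$p$ torsion of $A(L')$ uniformly, killing $A(L)[\ell^\infty]$ for all but finitely many $\ell$; so $A(L)_{\mrm{tors}}$ is finite.

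The step I expect to be genuinely hard is $\mathrm{(b)}\Rightarrow\mathrm{(a)}$ of (1), and inside it the abelian-variety case: neither ``trivial divisible part'' nor ``finite $[\ell^\infty]$-torsion'' passes formally across the quotient $G(F')\to B(F')$ by the finitely generated lattice $M$, so one cannot argue by pure group theory and must exploit the concrete shape of the uniformization --- split torus, good-reduction quotient, trivialized lattice, and the compatibility of division points with the multiplicative periods --- to route every counterexample back to $\mu_{\ell^\infty}$, to a good-reduction abelian variety, or to an infinite residue field. Excluding torsion-free divisible subgroups of $F'^{\times}$ and of formal groups over extensions that are deeply ramified but not complete, using only Galois-ness of $K$ over $\mbb{Q}_p$ and finiteness of roots of unity everywhere, is the other delicate point.
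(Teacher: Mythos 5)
Your treatment of part (1) is essentially the paper's own route: the equivalence there is exactly Theorem \ref{KF:char1} (1) specialized to a $p$-adic base, and the ingredients you list (rigid uniformization, reduction maps, Galois descent of compatible systems of division points, Tate curves linking $\mu_{\ell^\infty}$ to abelian varieties) are the ones the paper uses via Proposition \ref{div}, Lemma \ref{ellfine} and the citation to \cite{Oze23}. One small but necessary addition: your descent of a torsion-free divisible element does not end in a direct contradiction with (b)(i) --- finiteness of $\mu_{\ell^\infty}$ forces the compatible system of division points to be Galois-invariant, hence to descend to a $p$-adic subfield, and there you still need the Kummer-faithfulness of $p$-adic fields (Mattuck's theorem) as the base case; you never invoke it.

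The genuine gap is in part (2), direction $\mathrm{(a)}\Rightarrow\mathrm{(b)}$, which you treat as routine but which is where the paper does its main work (Steps 1--4 of the proof of Theorem \ref{KF:char1} (2)). You bound the prime-to-$p$ torsion of $A(L)$ by the special fibre of the N\'eron model of $A$ over a $p$-adic subfield $L'$. This fails when $A$ has toric reduction: $L/L'$ is in general infinitely ramified, a torsion point of $A(L)$ need not extend to an $\cO_L$-point of the N\'eron model formed over $\cO_{L'}$ (it lands in a component that only appears over a deeper finite level), and the component groups grow without bound up the tower --- for the Tate curve $\mbb{G}_m/q^{\mbb{Z}}$, adjoining $q^{1/n}$ produces a genuine $n$-torsion point invisible to the special fibre over $L'$. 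A decisive symptom: your argument for this direction makes no use of the hypothesis that $K$ is Galois over a $p$-adic field, yet Remark \ref{galois} (2) of the paper exhibits a non-Galois, totally ramified, Kummer-faithful field with finite residue field that is \emph{not} AV-tor-finite (adjoin compatible $(p^k-1)$-th roots of $p$); your argument would apply verbatim to that field and prove a false statement. The paper closes this hole by uniformizing $A$ as $S^{\mathrm{an}}/N^{\mathrm{an}}$, proving $S(L)_{\mathrm{tor}}$ finite from the torus and potential-good-reduction cases, and then, for $P\in A(L)[n]$, using that $\sigma P-P\in S(L)[n]$ for all $\sigma\in G_{k_0}$ (this is where Galois-ness enters) together with the AV-tor-finiteness of the base $p$-adic field to produce a single integer $M$ with $A(L)_{\mathrm{tor}}\subset A(\overline{K})[M^2]$. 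You need this descent trick, or an equivalent routing of the lattice torsion through roots of unity in the Galois closure and its finite residue field, in place of the N\'eron special fibre.
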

The finiteness properties such as (b) of Theorem \ref{Intro:KF} (2) have been studied as a standard problem
in arithmetic theory of abelian varieties. 
It seems that many known results are for abelian varieties over number fields, but some results are also known for those  over $p$-adic fields.
It is a theorem of Imai \cite{Ima75} that,
if $K=\mbb{Q}_p(\mu_{p^{\infty}})$, 
the torsion subgroup of $A(L)$ is finite
for any finite extension $L/K$ 
and any abelian variety $A/K$ with potential good reduction.
This result is known to be essentially valuable in studies such as Iwasawa theory.
Kubo and Taguchi \cite{KbTg13} generalized Imai's theorem in the sense that
Imai's theorem holds even after replacing $\mbb{Q}_p(\mu_{p^{\infty}})$
with the field $k(k^{1/p^{\infty}})$ 
obtained by adjoining to a $p$-adic field $k$
all $p$-power roots of all element of $k$.
However, the  fields $K$ appearing here are not Kummer-faithful
since Tate curves over $\mbb{Q}_p$ contain non-trivial divisible $K$-rational element
(see also Corollary \ref{LT:k=Qp}). 

It would be very interesting to consider analogues 
of Theorem  \ref{Intro:KF} over global fields; however, 
the author currently has no idea what such statements might look like.

\begin{acknowledgments}
The author would like to express sincere gratitude to Yuichiro Taguchi for valuable advice given in the early stages of this paper.
Special thanks are also due to Akio Tamagawa for helpful comments on the results in Section 4.
The author is furthermore grateful to Takahiro Murotani for insightful advice on the historical aspects of anabelian geometry.
\end{acknowledgments}

\vspace{5mm}
\noindent
{\bf Notation.}
A {\it number field} is a finite 
extension of the field  $\mbb{Q}$ of rational numbers.
Let $p$ be a rational prime.
A {\it $p$-adic field} is a finite extension of the field  $\mbb{Q}_p$ of $p$-adic numbers.
For any field $F$, 
we fix a separable closure $\overline{F}$ of $F$
and we  denote by $G_F$
the absolute Galois group $\mrm{Gal}(\overline{F}/F)$ of $F$.

\section{General theory for Kummer-faithful fields}

In this section, we recall the definition of  Kummer-faithful fields 
(cf.\ \cite[Def.\ 1.5]{Moc15}, \cite[Def.\ 1.2]{Hos17})
and study some standard properties for Kummer-faithfulness
in general settings. 
Before starting the main part, 
we briefly recall the degree of (not necessary finite) algebraic extensions.
A {\it supernatural number} is a formal product 
$\mfrak{n}=\prod_{\ell}\ell^{n_{\ell}}$ 
where $\ell$ runs over the set of primes 
and $n_{\ell}\in \mbb{Z}\cup \{\infty \}$.
Using the unique decomposition into prime powers,
we can view any natural number as a supernatural number.
The product of supernatural numbers are defined by the natural way,
and also the greatest common divisors and the least common multiple of 
supernatural numbers. 
Let $L$ be an algebraic extension of a perfect field $K$.
We define the {\it extension degree} $[L:K]$ of $L/K$ 
by 
\begin{equation*}
[L:K]:=\mrm{lcm}\{[K':K] \mid \mbox{$K'$ is a finite extension of  $K$ contained in  $L$}\}
\end{equation*}
as a supernatural number.
By Galois theory, we have a group theoretic interpretation of $[L:K]$ as follows.
Take any Galois extension $\tilde{L}$ of $K$ which contains $L$,
and set $G:=\mrm{Gal}(\tilde{L}/K)$ and $H:=\mrm{Gal}(\tilde{L}/L)$.
Then we see  $[L:K]=(G:H)$,
where the right hand side is the index of profinite groups in the sense of Section 1.3 of \cite{Ser97}.
For algebraic extensions $K\subset L\subset M$,
we have $[M:K]=[M:L]\cdot [L:K]$.
If an algebraic extension $L/K$ is of the form $L=\bigcup_i K_i$ for 
some finite extensions $K_i$ of $K$,
one has $[L:K]=\mrm{lcm}\{[K_i:K]\}_{i}$.
\begin{definition}
Let $L$ be an algebraic extension of a  field $K$
with $[L:K]=\prod_{\ell}\ell^{n_{\ell}}$.
We say that $L/K$ is {\it quasi-finite} if $n_{\ell}$ is finite for any prime $\ell$.
\end{definition}

Every finite extensions are clearly quasi-finite.
For an algebraic extension $\mbb{E}$  of a finite field $\mbb{F}$ with 
$[\mbb{E}:\mbb{F}]=\prod_{\ell}\ell^{n_{\ell}}$,
then one sees that
$\mbb{E}/\mbb{F}$ is quasi-finite if and only if 
$G_{\mbb{E}}$ is isomorphic to $\hat{\mbb{Z}}$.
If  $L$ is an unramified extension of a $p$-adic field  $K$,
then $L/K$ is quasi-finite 
if and only if the residue field extension of  $L/K$ is quasi-finite.

Now let us recall the definition of Kummer-faithful fields
and study some basic properties.
Let $M$ be a $\mbb{Z}$-module and $\ell$ a prime. 
We say that 
$P\in M$ is {\it divisible} (resp.\ {\it $\ell$-divisible}) 
if, for any integer $n>0$, 
there exists $Q\in M$ such that $P=nQ$ (resp.\ $P=\ell^nQ$).  
We denote by $M_{\mrm{div}}$ (resp.\ $ M_{\ell\text{\rm{-div}}}$)
the set of divisible (resp.\ $\ell$-divisible) elements of $M$, that is,
$$
M_{\mrm{div}}=\bigcap_{n>0} n M,     \qquad
M_{\ell\text{\rm{-div}}}=\bigcap_{n>0}\ell^n M.
$$ 
Note that 
$P\in M$  is divisible if and only if it is $\ell$-divisible for all primes $\ell$.
In fact, we have
$M_{\mrm{div}}=\bigcap_{\ell} M_{\ell\text{\rm{-div}}}$.
If a $\mbb{Z}$-module $M$ is divisible (i.e., $M_{\mrm{div}}=M$),
it is known  (e.g., \cite[Chapter 4, Theorem 3.1]{Fuc15}) that  $M$ is isomorphic to 
$\mbb{Q}^{I}\oplus \left(\oplus_{p} \left(\mbb{Z}[1/p]/\mbb{Z}\right)^{I_p}\right)$,
where $p$ runs over the set of primes and $I$, $I_p$ are some index sets.

\begin{definition} 
\label{Def:KF}
A perfect field $K$ is {\it Kummer-faithful} 
(resp.\  {\it AVKF}, resp.\  {\it torally Kummer-faithful})
if, for every finite extension $L$ of $K$ and every semi-abelian variety 
(resp.\ abelian variety,\ resp.\ torus)
$A$ over $L$, it holds that 
$$
A(L)_{\rm div} =0.
$$ 
\end{definition}
It is clear that  any subfield of a Kummer-faithful field is also Kummer-faithful,
and any finite extension of a Kummer-faithful field is also Kummer-faithful.
As is remarked in \cite[Rem.\ 1.5.2]{Moc15}, 
by considering the Weil restriction, 
one verifies immediately that one obtains
an equivalent definition of Kummer-faithful, 
if, in Definition \ref{Def:KF}, one restricts $L$ to be equal to $K$.
The same statements holds if we replace ``Kummer-faithful" 
with ``AVKF" or ``torally Kummer-faithful".

It is  shown by Mochizuki in Remark 1.5.4  of \cite{Moc15} that 
any {\it sub-$p$-adic field} is Kummer-faithful. 
Here, recall that a field $k$ is sub-$p$-adic if there exists a prime  $p$  and 
a finitely generated field extension $L$ of $\mbb{Q}_p$ such that 
$k$ is isomorphic to a subfield of $L$. 
Let $F^{\mrm{cyc}}$ be the maximal cyclotomic field of a number field $F$. 
Then it follows from the result of Ribet (and Proposition \ref{div})
that $F^{\mrm{cyc}}$ is AVKF (however, it is not Kummer-faithful).
Furthermore, if $L$ is a quasi-finite Galois extension of $F^{\mrm{cyc}}$,
then $L$ is also AVKF (cf.\ \cite[Proposition 6.3 (i)]{HoMoTs20}).
Furthermore, it is shown by Murotani and the author 
in \cite[Theorem 1.3]{MrOz25} that the field 
$F(F^{1/\infty})$ obtained by adjoining to $F$
all roots of all elements of $F$ is AVKF.

\begin{proposition} 
\label{KFreduction}
A perfect field 
$K$ is Kummer-faithful if and only if
it is both torally Kummer-faithful and AVKF.
\end{proposition}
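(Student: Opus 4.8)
The plan is as follows. The forward implication is immediate, since a torus and an abelian variety are in particular semi-abelian varieties: if $K$ is Kummer-faithful then $T(L)_{\mathrm{div}}=0$ and $A(L)_{\mathrm{div}}=0$ for every finite extension $L/K$, every torus $T$ over $L$ and every abelian variety $A$ over $L$. So the content is the converse. Assume $K$ is torally Kummer-faithful and AVKF, fix a finite extension $L/K$, a semi-abelian variety $A$ over $L$ sitting in its canonical exact sequence $0\to T\to A\xrightarrow{\pi} B\to 0$ with $T$ a torus and $B$ an abelian variety, and a point $P\in A(L)_{\mathrm{div}}$; I must show $P=0$.

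I would begin with a harmless reduction to the case that $T$ is split. Pick a finite extension $L'/L$ over which $T$ becomes split; then $L'/K$ is still finite, so $L'$ is again torally Kummer-faithful and AVKF, $P$ is still divisible in $A(L')$, and $P=0$ in $A(L')$ would force $P=0$. So I may assume $T$ is a split torus over $L$. Then $H^1(L,T)=0$ by Hilbert's Theorem 90, and hence $0\to T(L)\to A(L)\xrightarrow{\pi} B(L)\to 0$ is exact. Applying $\pi$ to the relations $P=nQ_n$ shows $\pi(P)\in\bigcap_n nB(L)=B(L)_{\mathrm{div}}$, which is $0$ by AVKF; therefore $P$ lies in $T(L)$.

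The crux --- and the step I expect to be the main obstacle --- is that $P$ is so far only known to be divisible \emph{inside $A(L)$}: a relation $P=nQ_n$ merely gives $\pi(Q_n)\in B(L)[n]$, not $Q_n\in T(L)$, so one cannot conclude directly that $P\in T(L)_{\mathrm{div}}$. To bridge this, fix a prime $\ell$ and an integer $k\ge 1$; I will prove $P\in \ell^k T(L)$. For each $m\ge k$, the snake lemma applied to multiplication by $\ell^m$ on the exact sequence above yields a connecting homomorphism $\delta_m\colon B(L)[\ell^m]\to T(L)/\ell^m T(L)$ whose image is exactly $\ker\big(T(L)/\ell^m T(L)\to A(L)/\ell^m A(L)\big)$. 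Since $P\in\ell^m A(L)$, the class of $P$ in $T(L)/\ell^m T(L)$ lies in $\operatorname{im}\delta_m$; pushing this class forward along $T(L)/\ell^m T(L)\to T(L)/\ell^k T(L)$ and using the compatibility $\delta_k\circ(\cdot\,\ell^{m-k})=(\text{reduction})\circ\delta_m$, I obtain that the class of $P$ in $T(L)/\ell^k T(L)$ lies in $\delta_k(S_m)$, where $S_m:=\ell^{m-k}B(L)[\ell^m]\subseteq B(L)[\ell^k]$, for every $m\ge k$.

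It then remains to note two things about the subgroups $(S_m)_{m\ge k}$. They form a descending chain inside the \emph{finite} group $B(L)[\ell^k]$, which is finite because $B[\ell^k]$ is a finite group scheme (valid also when $\ell$ is the characteristic of $L$). And $\bigcap_{m\ge k}S_m=0$: an element common to all the $S_m$ is $\ell^k$-torsion and lies in $\ell^i B(L)$ for every $i$, hence --- multiplication by any prime $\ne\ell$ being bijective on $\ell$-primary torsion --- is divisible in $B(L)$, hence $0$ by AVKF. A descending chain of finite groups with trivial intersection stabilizes at $0$, so $\delta_k(S_m)=0$ for $m$ large, and therefore the class of $P$ in $T(L)/\ell^k T(L)$ vanishes, i.e.\ $P\in\ell^k T(L)$. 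Letting $\ell$ and $k$ vary gives $P\in\bigcap_n nT(L)=T(L)_{\mathrm{div}}=0$ by toral Kummer-faithfulness, so $P=0$, completing the argument. The only genuinely nontrivial ingredient is the passage from divisibility in $A(L)$ to divisibility in $T(L)$; reducing to split $T$ and exploiting the finiteness of $B[\ell^k]$ are precisely what make the connecting-homomorphism bookkeeping collapse.
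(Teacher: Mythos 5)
Your proof is correct, but it routes around the paper's key lemma in a genuinely different way. The paper isolates a purely module-theoretic statement (its Lemma~2.5): for an exact sequence $0\to L\to M\to N$ of $\mbb{Z}$-modules with $L_{\mrm{div}}=N_{\mrm{div}}=0$ and with $M[n]$ finite for every $n$, one has $M_{\mrm{div}}=0$. The proof there is a compactness argument: the sets $X_n=\{y\in M: ny=x\}$ are non-empty and finite, so $\plim_n X_n\neq\emptyset$, and any compatible system $(x_n)$ consists of divisible elements of $M$, which are then forced into $L$ and exhibit $x$ as divisible in $L$. Applied to $0\to T(L)\to A(L)\to B(L)$ this finishes the proposition with no need for surjectivity on $L$-points, hence no base change to split $T$ and no Hilbert~90; the finiteness input is $A(L)[n]<\infty$, i.e.\ on the middle term. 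You instead first place $P$ in $T(L)$ via AVKF, then transfer $\ell$-power divisibility from $A(L)$ down to $T(L)$ by tracking the snake-lemma connecting maps $\delta_m\colon B(L)[\ell^m]\to T(L)/\ell^mT(L)$ and showing that the descending chain $S_m=\ell^{m-k}B(L)[\ell^m]$ inside the finite group $B(L)[\ell^k]$ stabilizes at $0$ (using AVKF a second time, together with the correct observation that an $\ell$-divisible $\ell$-power-torsion element is divisible). This costs you the reduction to a split torus and the full exactness $0\to T(L)\to A(L)\to B(L)\to 0$, but it makes visible exactly where each hypothesis enters and works one prime at a time; the paper's lemma is the more reusable tool (it is invoked again for the sequence $0\to p^{\mbb{Z}}\to L^{\times}\to E(L)$ in Proposition~3.1). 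Both arguments are, at bottom, the same compactness principle exploiting finiteness of torsion of (semi-)abelian varieties, applied to the middle term in the paper and to the quotient in your version.
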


\begin{proof}
Suppose that $K$ is both torally Kummer-faithful and AVKF.
Let $L$ be a finite extension of $L$ and $A$ a semi-abelian variety over $L$.
There exists an exact sequence  $0\to T\to A\to B\to 0$ of $K$-group schemes
where $T$ is a torus and $B$ is an abelian variety,
which induces an exact sequence $0\to T(L)\to A(L)\to B(L)$
of $\mbb{Z}$-modules.
Since $K$ is both torally Kummer-faithful and AVKF, 
we know that both $T(L)_{\mrm{div}}$ and $B(L)_{\mrm{div}}$
are zero.
Hence it follows from Lemma \ref{divlem} below that $A(L)_{\mrm{div}}$ is also zero.
\end{proof}

\begin{lemma}
\label{divlem}
Let $0\to L\to M\to N$ be an exact sequence of $\mbb{Z}$-modules.
Assume that $L_{\mrm{div}}=0$, $N_{\mrm{div}}=0$ and 
the kernel of  the $n$-th multiplication map on $M$ is finite for any $n>0$.
Then we have $M_{\mrm{div}}=0$.
\end{lemma}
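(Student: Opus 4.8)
The plan is to reduce the statement to one purely group-theoretic fact: under the given finiteness hypothesis, the subgroup $M_{\mathrm{div}}=\bigcap_{n>0}nM$ is itself a \emph{divisible} group, not merely a group of divisible elements. Granting this, the lemma is immediate. Writing $f\colon M\to N$ for the given map and identifying $L$ with $\ker f$, the image $f(M_{\mathrm{div}})$ consists of divisible elements of $N$ (a homomorphism sends divisible elements to divisible elements), hence $f(M_{\mathrm{div}})\subseteq N_{\mathrm{div}}=0$, so $M_{\mathrm{div}}\subseteq L$. Now if $P\in M_{\mathrm{div}}$ then for every $n>0$ we can write $P=nQ$ with $Q\in M_{\mathrm{div}}\subseteq L$, so $P\in nL$; as this holds for all $n$ we get $P\in L_{\mathrm{div}}=0$. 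Thus $M_{\mathrm{div}}=0$.

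So the work is in proving: \emph{if $M[n]$ is finite for every $n>0$, then $M_{\mathrm{div}}$ is divisible}. Fix $P\in M_{\mathrm{div}}$ and $n>0$; I must find $Q\in M_{\mathrm{div}}$ with $nQ=P$. Put $S=\{Q\in M\mid nQ=P\}$. It is non-empty since $P$ is $n$-divisible, and it is a coset of $M[n]$, hence finite. For each $m>0$ set $S_m=S\cap mM$. Each $S_m$ is non-empty: writing $P=(nm)R$, which is possible because $P$ is divisible, the element $mR$ lies in $S_m$. If $m\mid m'$ then $m'M\subseteq mM$, so the family $\{S_m\}_{m>0}$ is downward directed under divisibility and satisfies $S_{\mathrm{lcm}(m_1,\dots,m_k)}\subseteq S_{m_1}\cap\dots\cap S_{m_k}$; being a family of non-empty subsets of the finite set $S$ with the finite intersection property, it has non-empty total intersection. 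Any $Q\in\bigcap_{m>0}S_m$ satisfies $nQ=P$ and $Q\in mM$ for all $m$, i.e.\ $Q\in M_{\mathrm{div}}$, as required.

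The only place a hypothesis of the lemma enters is the finiteness of $S$, coming from the finiteness of $M[n]$; everything else is a compactness argument. I expect this to be the crux: recognizing that the torsion hypothesis is precisely what lets one choose an $n$-th root of $P$ that is \emph{simultaneously} divisible by all $m$. The passage through the exact sequence, and the reduction to the divisibility of $M_{\mathrm{div}}$, is then formal. (An equivalent formulation replaces the finite-intersection-property step by noting that a member of the directed family $\{S_m\}$ of minimal cardinality must already be contained in every $S_m$.)
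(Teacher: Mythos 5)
Your proof is correct and is essentially the paper's own argument: the paper likewise uses the finiteness of $M[n]$ to see that the sets $X_n$ of $n$-th roots of $x$ are finite and non-empty, extracts a compatible system from the non-empty inverse limit $\varprojlim_n X_n$ (the same compactness step as your finite-intersection-property argument), observes that each member of that system is a divisible element of $M$, hence maps to $N_{\mathrm{div}}=0$ and lies in $L$, and concludes $x\in L_{\mathrm{div}}=0$. Your repackaging of the compactness step as the statement that $M_{\mathrm{div}}$ is itself a divisible group is only a cosmetic reorganization of the same mechanism.
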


\begin{proof}
Take any $x\in M_{\mrm{div}}$. 
For any $n>0$, 
we denote by $X_n$ the set of all $y\in M$ such that $ny=x$.
Then $\{X_n\}_{n>0}$ forms a projective system with transition maps
$f_{n,m}\colon X_m\to X_n$ given by $f_{n,m}(z)=(m/n)z$ for $n\mid m$.
Since each $X_n$ is a non-empty finite set, 
the projective limit $\plim_{n} X_n$ is non-empty.
Take any $(x_n)_n\in \plim_{n} X_n$
and denote by $f\colon M\to N$ the map in the statement of the lemma.
Since each $x_n$ is a divisible element of $M$,
 we see $f(x_n)\in N_{\mrm{div}}$. This shows $f(x_n)=0$ and thus $x_n\in L$.
Since $x=nx_n\in L$ for any $n$, we find that  $x$ is a divisible element of $L$.
This gives $x=0$ as desired. 
\end{proof}

We study some relations between Kummer-faithfulness 
and finiteness of prime power torsion part of semi-abelian varieties.

\begin{definition} 
\label{Def:fine}
(1) Let $\ell$ be a prime.
A perfect field $K$ is {\it $\ell^{\infty}$-semi-AV-tor-finite} 
(resp.\  {\it $\ell^{\infty}$-AV-tor-finite}) 
if, for every finite extension $L$ of $K$ and every semi-abelian variety (resp.\ abelian variety) 
$A$ over $L$, it holds that 
$A(L)[\ell^{\infty}]$ is finite. 

\noindent
(2) A perfect field $K$ is {\it locally semi-AV-tor-finite} 
(resp.\  {\it locally AV-tor-finite}
) if
it is  $\ell^{\infty}$-semi-AV-tor-finite (resp.\  $\ell^{\infty}$-AV-tor-finite)
for every prime $\ell$.

\noindent
(3) A perfect field $K$ is {\it semi-AV-tor-finite} 
(resp.\  {\it AV-tor-finite}) if, 
for every finite extension $L$ of $K$ and every semi-abelian variety (resp.\ abelian variety) 
$A$ over $L$, it holds that 
$A(L)_{\mrm{tor}}$ is finite. 
\end{definition}

It is helpful to the readers to refer \cite[Section 6]{HoMoTs20}
for various properties of Kummer-faithful fields, AVKF fields, 
$\ell^{\infty}$-AV-tor-finite fields and so on. 
Note that locally semi-AV-tor-finite is equivalent to 
{\it $\mfrak{Primes}^{\infty}$-AV-tor-finite}
in the sense of \cite[Definition 6.1 (iv)]{HoMoTs20}.
Any sub-$p$-adic field is semi-AV-tor-finite by Proposition 2.9 of \cite{OzTg22}. 
The theorem of Ribet \cite{KaLa81} shows that 
the maximal cyclotomic field $F^{\mrm{cyc}}$  of a number field $F$ 
is AV-tor-finite.
Moreover, it is shown in \cite[Corollary 1.2]{MrOz25} 
that the field 
$F((\cO_F^{\times})^{1/\infty})$ obtained by adjoining 
to $F$ all roots of all units of the integer ring 
of $F$ is also AV-tor-finite.

\begin{proposition} 
\label{div}
Let $K$ be an algebraic extension of a field $k$.
Let $A$ be a semi-abelian variety {\rm (}resp.\ abelian variety,\ resp.\ torus{\rm )} over $k$.
Consider the following conditions.
\begin{itemize}
\item[{\rm (a)}] $A(K)_{\rm div}$ is zero.
\item[{\rm (b)}] $A(K)[\ell^{\infty}]$ is finite for any prime  $\ell$.
\end{itemize}
Then we have $(a) \Rightarrow (b)$. 
If $k$ is  Kummer-faithful {\rm (}resp.\  AVKF, resp.\  torally Kummer-faithful{\rm )}
and $K$ is a Galois extension of $k$, then 
we have $(a)\Leftrightarrow (b)$.  
\end{proposition}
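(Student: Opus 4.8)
The plan is to prove the two implications separately: $(a)\Rightarrow(b)$ is formal and uses neither extra hypothesis, while the converse (under the stated hypotheses) rests on the observation that $(b)$ forces $A(K)_{\mrm{div}}$ to be an honest $\mbb{Q}$-vector space, after which a short Galois-descent argument closes the loop with the Kummer-faithfulness of a finite subextension.

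\smallskip
\noindent\emph{Proof of $(a)\Rightarrow(b)$.} I argue by contraposition. If $A(K)[\ell^{\infty}]$ is infinite for some prime $\ell$, then it is a subgroup of $A(\overline{k})[\ell^{\infty}]$, which is isomorphic to $(\mbb{Q}_\ell/\mbb{Z}_\ell)^{d}$ with $d<\infty$ (the $\ell$-adic Tate module of a semi-abelian variety — a fortiori of an abelian variety or a torus — has finite $\mbb{Z}_\ell$-rank, in every characteristic). An infinite subgroup $H$ of the Artinian $\mbb{Z}_\ell$-module $(\mbb{Q}_\ell/\mbb{Z}_\ell)^{d}$ has nonzero divisible part $H_{\mrm{div}}$: writing $H=H_{\mrm{div}}\oplus R$ with $R$ reduced, $R$ is a reduced Artinian $\mbb{Z}_\ell$-module, hence finite, so $H_{\mrm{div}}\ne 0$. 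Since a divisible subgroup of $A(K)$ consists of divisible elements of $A(K)$, this yields $A(K)_{\mrm{div}}\ne 0$, i.e.\ $(a)$ fails.

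\smallskip
\noindent\emph{Proof of $(b)\Rightarrow(a)$ under the extra hypotheses.} Assume $(b)$. I would first check that $B:=A(K)_{\mrm{div}}$ is a $\mbb{Q}$-vector space. Torsion-freeness: if $t\in B$ has finite order, decompose $t=\sum_{\ell}t_\ell$ into its $\ell$-primary parts (Chinese remainder theorem); each $t_\ell$ is an integer multiple of $t$, hence again in $B$, and $t_\ell\in A(K)[\ell^{\infty}]$, which is finite by $(b)$; being a divisible element of $A(K)$, $t_\ell$ is $\ell$-divisible inside the finite group $A(K)[\ell^{\infty}]$, so $t_\ell=0$, whence $t=0$. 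Divisibility as a group: fix $x\in B$ and $m\ge1$, and set $P:=\{y\in A(K):my=x\}$, a nonempty torsor under the finite group $A(K)[m]$, hence a finite set; for every $n\ge1$ the inclusion $x\in(mn)A(K)$ gives $x=mnu$ with $nu\in P\cap nA(K)$, and since $P\cap nA(K)\supseteq P\cap n'A(K)$ for $n\mid n'$ and $P$ is finite, the intersection $\bigcap_{n\ge1}(P\cap nA(K))=P\cap B$ is nonempty, i.e.\ $x\in mB$. Now suppose for contradiction $B\ne0$ and pick $0\ne x\in B$; then $x\in A(L)$ for $L$ the Galois closure of $k(x)$ inside the Galois extension $K/k$, so $L/k$ is finite Galois and $L\subseteq K$. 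For each $n$ let $x_n$ be the unique element of the $\mbb{Q}$-vector space $B$ with $nx_n=x$. Any $\sigma\in\mrm{Gal}(K/L)$ carries $B$ into $B$ and fixes $x$, so $n\,\sigma(x_n)=\sigma(x)=x=n\,x_n$, and torsion-freeness of $B$ forces $\sigma(x_n)=x_n$; hence $x_n\in A(K)^{\mrm{Gal}(K/L)}=A(L)$ for all $n$, so $x\in\bigcap_n nA(L)=A(L)_{\mrm{div}}$. But $L$ is a finite extension of $k$, which is Kummer-faithful (resp.\ AVKF with $A$ an abelian variety, resp.\ torally Kummer-faithful with $A$ a torus), hence $L$ has the same property, so $A(L)_{\mrm{div}}=0$ and $x=0$ — a contradiction. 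Thus $B=0$ and $(a)$ holds.

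\smallskip
The step I expect to require the most care is showing that $(b)$ makes $A(K)_{\mrm{div}}=\bigcap_n nA(K)$ a genuine $\mbb{Q}$-vector space — both that it is torsion-free and that it is divisible \emph{as a group}, not merely a group of ``divisible elements''; this is the one place where $(b)$ is used, through the finiteness of $A(K)[\ell^{\infty}]$ and of the torsors $P$. After that, the Galois part is just the uniqueness of $\tfrac1n x$ inside $B$ together with its $\mrm{Gal}(K/L)$-invariance. Along the way I would also record the routine points: that $(b)$ implies $A(K)[n]$ is finite for every $n$; that a divisible subgroup of an abelian group consists of divisible elements; that $K/L$ is Galois with $A(K)^{\mrm{Gal}(K/L)}=A(L)$; and that finite extensions of Kummer-faithful (resp.\ AVKF, torally Kummer-faithful) fields inherit the property, as recalled after Definition \ref{Def:KF}.
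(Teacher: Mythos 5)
Your proof is correct. Note that the paper itself gives no self-contained argument here: it simply invokes Proposition 2.4 of \cite{OzTg22} and remarks that the argument there carries over to the AVKF and torally Kummer-faithful cases. Your write-up supplies exactly the content being delegated, and by the same mechanism one would expect: $(a)\Rightarrow(b)$ via the structure of infinite subgroups of the cofinitely generated $\ell$-primary torsion of a semi-abelian variety, and $(b)\Rightarrow(a)$ by showing that finiteness of each $A(K)[\ell^{\infty}]$ forces $A(K)_{\mrm{div}}$ to be a torsion-free divisible group (your finite-torsor intersection argument for divisibility \emph{as a group} is the genuinely non-formal step, and it is right), after which the uniqueness of $\tfrac1n x$ in a $\mbb{Q}$-vector space makes it $\mrm{Gal}(K/L)$-invariant for a suitable finite Galois subextension $L/k$, so that Kummer-faithfulness (resp.\ AVKF, resp.\ toral Kummer-faithfulness) of the finite extension $L$ of $k$ kills it. The only points worth recording explicitly are the ones you already flag: that $K/k$ Galois guarantees the Galois closure of $k(x)$ stays inside $K$ and that $A(K)^{\mrm{Gal}(K/L)}=A(L)$; and, in the torsion-freeness step, that a witness $y$ to $t_{\ell}=\ell^{n}y$ automatically lies in $A(K)[\ell^{\infty}]$, so the finiteness hypothesis really does apply.
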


\begin{proof}
The statement for Kummer-faithful fields follows from Proposition 2.4 of \cite{OzTg22}. 
The arguments  in {\it loc. cit.} proceed also for 
AVKF fields and torally Kummer-faithful fields.
\end{proof}
Here is an immediate consequence of the proposition above.
\begin{corollary}
\label{fine=KF}
{\rm (1)} A perfect field is  locally semi-AV-tor-finite if it is Kummer-faithful. 

\noindent
{\rm (2)} For a Galois extension field of a Kummer-faithful field, 
it is  locally semi-AV-tor-finite if and only if it is Kummer-faithful.
\end{corollary}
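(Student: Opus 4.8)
The plan is to deduce both parts directly from Proposition~\ref{div}, using the two standard facts recorded around it: a finite extension of a Kummer-faithful field is Kummer-faithful, and (via Weil restriction, cf.\ the remark after Definition~\ref{Def:KF}) a perfect field $K$ is Kummer-faithful as soon as $A(K)_{\mathrm{div}}=0$ holds for every semi-abelian variety $A$ over $K$ itself.

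For (1), suppose $K$ is Kummer-faithful. Given a prime $\ell$, a finite extension $L/K$, and a semi-abelian variety $A$ over $L$, the definition of Kummer-faithfulness (which already ranges over all finite extensions of $K$) gives $A(L)_{\mathrm{div}}=0$. Applying the implication $(a)\Rightarrow(b)$ of Proposition~\ref{div} with base field $L$ and the trivial algebraic extension $L/L$ then shows that $A(L)[\ell^\infty]$ is finite. Hence $K$ is $\ell^\infty$-semi-AV-tor-finite for every $\ell$, i.e.\ locally semi-AV-tor-finite. Note that this argument uses neither a Galois hypothesis nor a Kummer-faithful base.

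For (2), the "if" direction is exactly (1). For "only if", let $K/k$ be Galois with $k$ Kummer-faithful, and assume $K$ is locally semi-AV-tor-finite. By the Weil-restriction reformulation it suffices to prove $A(K)_{\mathrm{div}}=0$ for an arbitrary semi-abelian variety $A$ over $K$. Since $K/k$ is algebraic, a routine spreading-out argument (the finitely many coefficients occurring in equations and group-law data defining $A$ all lie in $K$, hence are algebraic over $k$) descends $A$ to a model $A'$ over a finite subextension $k'$ of $K/k$; then $k'$ is Kummer-faithful, being a finite extension of $k$, and $K/k'$ is again Galois. Because $K$ is $\ell^\infty$-semi-AV-tor-finite and $A'_K=A$ is semi-abelian over $K$, the group $A'(K)[\ell^\infty]=A(K)[\ell^\infty]$ is finite for every prime $\ell$; so Proposition~\ref{div} (the implication $(b)\Rightarrow(a)$, valid because $k'$ is Kummer-faithful and $K/k'$ is Galois) yields $A'(K)_{\mathrm{div}}=A(K)_{\mathrm{div}}=0$, as required.

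The only point needing care is making Proposition~\ref{div} applicable in part (2): its converse direction requires \emph{both} a Kummer-faithful base field and a Galois extension, and one cannot simply take the base to be $K$ (that would be circular) nor work with $A$ over a proper finite extension $L$ of $K$ (the extension $L/k'$ arising from descent need be neither finite nor Galois). The Weil-restriction reduction is precisely what resolves this, since it lets us assume $A$ is already defined over $K$, so that after descending to a finite $k'\subseteq K$ the extension $K/k'$ is still Galois over a Kummer-faithful base. Everything else is formal.
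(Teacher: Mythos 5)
Your proof is correct and fills in the details of exactly the argument the paper intends: the paper derives the corollary as an "immediate consequence" of Proposition~\ref{div}, with part (1) coming from the unconditional implication $(a)\Rightarrow(b)$ and part (2) from the converse after reducing, via the Weil-restriction remark following Definition~\ref{Def:KF}, to a semi-abelian variety over $K$ itself and descending it to a finite subextension $k'$ of the Kummer-faithful base so that $K/k'$ is still Galois. Your closing observation about why the Weil-restriction step is what makes Proposition~\ref{div} applicable is accurate and is precisely the point the paper leaves implicit.
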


Note that there exists a  locally semi-AV-tor-finite field which is not Kummer-faithful.
Let $a>1$ be a natural number and take a system $(a_n)_{n>0}$
in $\overline{\mbb{Q}}$ such that $a_1=1$ and $a_{nm}^m=n$ for all $n,m>0$.
Denote by $K$ the extension field of $\mbb{Q}$
obtained by adjoining all $a_n$ for $n>0$.  
Then, $K$ is  locally semi-AV-tor-finite but is not Kummer-faithful since $K^{\times}$ 
contains a non-trivial divisible element $a$.

\begin{proposition}
\label{fine:ppl:fine}
Let $\Box \in \{semi\mbox{-}AV,AV\}$
and $\ell$ a prime. 
Let $K$ be a perfect field and  $L$ a potentially prime-to-$\ell$ extension of $K$.
If $K$ is $\ell^{\infty}$-$\Box$-tor-finite,
then $L$ is also $\ell^{\infty}$-$\Box$-tor-finite.

In particular, any quasi-finite extension of a  locally semi-AV-tor-finite field 
is also locally semi-AV-tor-finite. 
\end{proposition}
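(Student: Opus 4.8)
The plan is to strip off the word ``potentially'', reduce in two short steps to a single statement about the $\ell$-adic Tate module of $A$, prove that statement by a pro-$\ell$-Sylow argument, and finally deduce the last sentence by elementary bookkeeping with supernatural numbers.

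First, the reduction. Let $M/L$ be a finite extension and $A$ a semi-abelian (resp.\ abelian) variety over $M$; we must show $A(M)[\ell^{\infty}]$ is finite. By the definition of a potentially prime-to-$\ell$ extension, there is a finite extension $K'/K$ with $K'L/K'$ prime-to-$\ell$; replacing $M$ by $MK'$, which is still finite over $L$ and only enlarges $A(M)[\ell^{\infty}]$, we may assume $K'\subseteq M$, so that $M$ is a finite extension of $K'L$ while $K'L/K'$ is prime-to-$\ell$. Hence the valuations $v_{\ell}([M':K'])$ are bounded above --- by $v_{\ell}([M:K'L])$ --- as $M'$ runs over the finite subextensions of $M/K'$. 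The crucial step is now to base $A$ not over its field of definition, but over a finite subextension $M_{2}$ of $M/K'$ which contains a field of definition of $A$ and which, among such subextensions, maximizes $v_{\ell}([M_{2}:K'])$; such an $M_{2}$ exists because we are maximizing nonnegative integers subject to a finite upper bound. Then $v_{\ell}([M':M_{2}])=0$ for every finite $M'$ with $M_{2}\subseteq M'\subseteq M$, i.e.\ $M/M_{2}$ is prime-to-$\ell$; moreover $A=A_{0}\times_{M_{2}}M$ for a semi-abelian (resp.\ abelian) variety $A_{0}/M_{2}$, and $M_{2}$ is $\ell^{\infty}$-$\Box$-tor-finite because it is finite over $K$. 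Since $A(M)[\ell^{\infty}]=A_{0}(M)[\ell^{\infty}]$, everything is reduced to proving: \emph{if $F$ is $\ell^{\infty}$-$\Box$-tor-finite, $A/F$ is semi-abelian (resp.\ abelian), and $E/F$ is prime-to-$\ell$, then $A(E)[\ell^{\infty}]$ is finite.}

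To prove this I would use the standard fact that, for a semi-abelian variety $A$ over a field $F''$, the group $A(F'')[\ell^{\infty}]$ is finite if and only if $(V_{\ell}A)^{G_{F''}}=0$, where $V_{\ell}A=T_{\ell}A\otimes_{\mbb{Z}_{\ell}}\mbb{Q}_{\ell}$ (for semi-abelian $A$ this follows from the torus and abelian cases via $0\to T\to A\to B\to 0$). So the hypothesis on $F$ says that $(V_{\ell}A)^{U}=0$ for every open subgroup $U\le G_{F}$; since $\rho=\rho_{A,\ell}\colon G_{F}\to\bar G:=\rho(G_{F})$ maps $G_{F}$ onto a closed subgroup $\bar G\subseteq\mrm{GL}(T_{\ell}A)$, this is equivalent to $(V_{\ell}A)^{W}=0$ for every open $W\le\bar G$. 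Writing $N$ for the $\mbb{Z}_{\ell}$-rank of $T_{\ell}A$, the subgroup $\bar G_{0}:=\bar G\cap(1+\ell\,\mrm{M}_{N}(\mbb{Z}_{\ell}))$ is open in $\bar G$ and pro-$\ell$, hence $(V_{\ell}A)^{\bar G_{0}}=0$. Let $Q$ be a pro-$\ell$-Sylow subgroup of $\bar G$; since every pro-$\ell$ subgroup of a profinite group lies in a conjugate of a pro-$\ell$-Sylow (a standard fact, see \cite{Ser97}), we have $\bar G_{0}\subseteq gQg^{-1}$ for some $g\in\bar G$, so $(V_{\ell}A)^{gQg^{-1}}\subseteq(V_{\ell}A)^{\bar G_{0}}=0$ and therefore $(V_{\ell}A)^{Q}=g^{-1}\cdot(V_{\ell}A)^{gQg^{-1}}=0$. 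Finally, if $E/F$ is prime-to-$\ell$ then $G_{E}\le G_{F}$ contains a pro-$\ell$-Sylow $P$ of $G_{F}$ (a pro-$\ell$-Sylow of $G_{E}$ has index in $G_{F}$ dividing $(G_{F}:G_{E})(G_{E}:P)$, which is prime to $\ell$), and $\rho(P)$ is again a pro-$\ell$-Sylow (its index in $\bar G$ divides $(G_{F}:P)$), so it is conjugate to $Q$; hence $(V_{\ell}A)^{G_{E}}\subseteq(V_{\ell}A)^{P}=(V_{\ell}A)^{\rho(P)}=0$, and $A(E)[\ell^{\infty}]$ is finite.

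It remains to derive the last assertion. Let $L/K$ be a quasi-finite extension of a locally semi-AV-tor-finite field and fix a prime $\ell$; by quasi-finiteness the exponent of $\ell$ in $[L:K]$ is some $n<\infty$. Choose a finite subextension $K_{0}/K$ of $L/K$ with $v_{\ell}([K_{0}:K])=n$ (this $n$ is the supremum of the bounded set of integers $\{v_{\ell}([K':K]):K'\subseteq L\ \text{finite over}\ K\}$, hence is attained). Then $v_{\ell}([K':K_{0}])=0$ for every finite $K'$ between $K_{0}$ and $L$, i.e.\ $L/K_{0}$ is prime-to-$\ell$, so $L/K$ is potentially prime-to-$\ell$; by the first part $L$ is $\ell^{\infty}$-semi-AV-tor-finite, and since $\ell$ was arbitrary, $L$ is locally semi-AV-tor-finite. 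The step I expect to be the real obstacle --- the one the argument is arranged to circumvent --- is that basing $A$ over a field of definition is not enough: for such a field $M_{2}^{0}$, the extension $M/M_{2}^{0}$ (equivalently $M_{2}^{0}L/M_{2}^{0}$) need not be prime-to-$\ell$ even though $L/K$ is, and maximizing the $\ell$-adic valuation of the degree is exactly what repairs this. A secondary subtlety is the passage from ``no invariants over all open subgroups'' to ``no invariants over a pro-$\ell$-Sylow'', which is false for general $\ell$-adic Galois representations and works here only because $\bar G$, being a closed subgroup of some $\mrm{GL}_{N}(\mbb{Z}_{\ell})$, contains an open pro-$\ell$ subgroup.
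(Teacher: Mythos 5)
Your proof is correct, and its engine is the same as the paper's: after adjoining $A[\ell]$ the $\ell$-adic Galois image becomes pro-$\ell$, and a prime-to-$\ell$ extension cannot meet a pro-$\ell$ extension nontrivially. The packaging differs, though. The paper stays entirely at the level of fields and torsion points: it sets $L''=L'(K'(A[\ell]))$, uses the definition of ``potentially prime-to-$\ell$'' to find a finite $K''\supseteq K'(A[\ell])$ with $L''/K''$ prime-to-$\ell$, and then concludes $A(L'')[\ell^{\infty}]=A(K'')[\ell^{\infty}]$ directly from $L''\cap K''(A[\ell^{\infty}])=K''$, with no mention of Tate modules or Sylow theory. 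You instead pass to $(V_{\ell}A)^{G_E}$ and run a pro-$\ell$-Sylow conjugacy argument; this is a faithful group-theoretic translation of the same fact, at the cost of invoking the criterion ``$A(E)[\ell^{\infty}]$ infinite iff $(T_{\ell}A)^{G_E}\neq 0$'' and Serre's Sylow theory for profinite groups. What your version buys is a fully explicit reduction step: the paper asserts in one line that $L''/K'(A[\ell])$ inherits potential prime-to-$\ell$-ness from $L/K$, whereas your maximization of $v_{\ell}([M_2:K'])$ over finite subextensions containing a field of definition proves exactly this inheritance (and the same device cleanly yields the quasi-finite corollary). Both arguments are sound; the paper's is shorter, yours is more self-contained.
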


\begin{proof}
Let $L'$ be a finite extension of $L$ and $A$ a (semi-)abelian variety over $L'$.
Take any finite extension $K'/K$ contained in $L'$ such  that  
$A$ is defined over $K'$.  
Setting $L'':=L'(K'(A[\ell]))$, we know that 
$L''/K'(A[\ell])$ is potentially prime-to-$\ell$ since so is $L/K$.
Thus there exists  a finite extension  $K''$ of $K'(A[\ell])$ contained in $L''$ such that 
$L''/K''$ is prime-to-$\ell$. 
Since $K''(A[\ell^{\infty}])$ is a pro-$\ell$ extension of $K''$ but 
$L''$ is a prime-to-$\ell$ extension of $K''$,
we see that the intersection $L''\cap K''(A[\ell^{\infty}])$ is equal to $K''$.
Hence we obtain 
$A(L')[\ell^{\infty}]\subset A(L'')[\ell^{\infty}]
=A(K'')[\ell^{\infty}]$.
Since $K$ is $\ell^{\infty}$-semi-AV-tor-finite,
the finiteness of  $A(K'')[\ell^{\infty}]$ is assured,  
which gives the fact that  $A(L')[\ell^{\infty}]$ is finite.
\end{proof}

It may be helpful to write down the following implications:
$$
\displaystyle \xymatrix{
\mbox{semi-AV-tor-finite}
\ar@{=>}[r] 
&
\mbox{locally semi-AV-tor-finite}
\ar@{=>}[r] 
& \mbox{$\ell^{\infty}$-semi-AV-tor-finite}
\\
\mbox{sub-$p$-adic}
\ar@{=>}[u] 
\ar@{=>}[r] 
& 
\mbox{KF = TKF and AVKF} 
\ar@{=>}^{(\ast)}[u] 
& 
}
$$
Here, KF and TKF stand for Kummer-faithful and torally Kummer-faithful, respectively. 
The vertical arrow ($\ast$) is an equivalence relation for the class of Galois extension fields 
of Kummer-faithful fields.
It follows from Theorem \ref{KF:char1} (2)  below that,  
for Galois extension fields of $p$-adic fields,
we have an equivalence 
``semi-AV-tor-finite $\Leftrightarrow$ KF with finite residue fields".

\section{Kummer-faithfulness in $p$-adic settings}

In this section, we study Kummer-faithfulness
for algebraic extensions $K$ of $\mbb{Q}_p$.
Throughout this section,
we denote by $\mbb{F}_K$ the residue field of  $K$.
First we should point out that, by the existence of Tate curves, 
one can check 
that Kummer-faithfulness is equivalent to 
AVKF property in this situation. 
\begin{proposition}
\label{KF=AVKF}
Let $K$ be an algebraic extension of $\mbb{Q}_p$.

\begin{itemize}
\item[{\rm (1)}] 
For a prime $\ell$, 
$K$ is $\ell^{\infty}$-semi-AV-tor-finite 
if and only if it is $\ell^{\infty}$-AV-tor-finite.
\item[{\rm (2)}] 
$K$ is Kummer-faithful if and only if $K$ is AVKF.
\end{itemize}
\end{proposition}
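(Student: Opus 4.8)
The plan is to reduce Kummer-faithfulness to AVKF by handling the ``torus part'' of any semi-abelian variety using the theory of Tate curves, which is available because we are over a $p$-adic field. By Proposition \ref{KFreduction}, it suffices to show that an algebraic extension $K$ of $\mbb{Q}_p$ which is AVKF is automatically torally Kummer-faithful; then Proposition \ref{KFreduction} gives that $K$ is Kummer-faithful, and the converse implication (Kummer-faithful $\Rightarrow$ AVKF) is trivial from the definitions. So the whole content is: \emph{over a $p$-adic base, AVKF forces torally Kummer-faithful}. For part (1), the analogous reduction is that $\ell^{\infty}$-AV-tor-finite forces $\mbb{G}_m(L)[\ell^{\infty}] = \mu_{\ell^{\infty}}(L)$ to be finite for every finite extension $L/K$; combined with Proposition \ref{fine:ppl:fine}-style bookkeeping and the structure of semi-abelian varieties as extensions of abelian varieties by tori, this yields $\ell^{\infty}$-semi-AV-tor-finiteness. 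I would in fact prove (1) first and deduce (2), or prove (2) directly and note (1) follows similarly; I will describe the approach for (2).

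First I would recall that for a finite extension $L/K$ and a torus $T/L$, after passing to a further finite (even finite Galois) extension $L'/L$ that splits $T$, we have $T(L') \cong (L'^{\times})^{\dim T}$, so it suffices to show $L'^{\times}$ has trivial divisible part, i.e. $\mu_{\ell^{\infty}}(L')$ is finite for every prime $\ell$ and $L'^{\times}$ has no other divisible elements. The absence of divisible elements other than roots of unity: a divisible element $u \in L'^{\times}$ with $u = v^n$ for all $n$ would, via the $p$-adic valuation, force $v(u)$ to be infinitely divisible in $\mbb{Q}$, hence $v(u)=0$, so $u \in \cO_{L'}^{\times}$; then one works in the (pro-$p$ times finite) unit group and sees that a divisible unit must be a root of unity. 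So the only possible obstruction to torally Kummer-faithful is infinitely many $\ell$-power roots of unity in some finite extension of $K$. Here is where the Tate curve enters: choosing $q \in L'^{\times}$ with $0 < v(q) < \infty$, the Tate curve $E_q/L'$ is an elliptic curve with $E_q(L') \cong L'^{\times}/q^{\mbb{Z}}$, and from the exact sequence $0 \to \mu_{\ell^{\infty}} \to E_q(\overline{L'})[\ell^{\infty}] \to q^{\mbb{Q}}/q^{\mbb{Z}} \to 0$ one extracts that $\mu_{\ell^{\infty}}(L')$ embeds into $E_q(L')[\ell^{\infty}]$ (the $\mu$-part is exactly the image of $\cO_{L'}^{\times}[\ell^{\infty}]$, which is the $L'$-rational torsion coming from the ``connected'' part). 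Since $K$ is AVKF, $E_q(L')[\ell^{\infty}]$ is finite by Proposition \ref{div}, hence $\mu_{\ell^{\infty}}(L')$ is finite.

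Putting these together: for a finite extension $L/K$, a torus $T/L$, and $x \in T(L)_{\mrm{div}}$, I pass to a splitting extension $L'/L$, where $x$ becomes a tuple of divisible elements of $L'^{\times}$; each such element is a root of unity by the valuation argument, so lies in $\mu(L')_{\mrm{div}}$; but $\mu_{\ell^{\infty}}(L')$ is finite for all $\ell$ by the Tate curve argument, so $\mu(L')$ has trivial divisible part, forcing $x = 0$ in $T(L')$ and hence in $T(L)$. Thus $K$ is torally Kummer-faithful, and Proposition \ref{KFreduction} finishes (2). For (1) I would run the same argument with ``$\mrm{div}$'' replaced by ``$[\ell^{\infty}]$ finite'' throughout, using that $T(L')[\ell^{\infty}]$ involves $\mu_{\ell^{\infty}}(L')$, controlled by the Tate curve, together with the exact sequence $0 \to T(L) \to A(L) \to B(L)$ for a semi-abelian $A$ and the finiteness of $n$-torsion kernels as in Lemma \ref{divlem}. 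The main obstacle is the bookkeeping in the Tate-curve step: making sure that the $L'$-rational $\ell$-power torsion of $E_q$ genuinely captures $\mu_{\ell^{\infty}}(L')$ (i.e. that the ``$\mu$-part'' of $E_q[\ell^{\infty}]$ is the subgroup of points whose $q$-adic component is trivial, which are exactly the $L'$-rational ones coming from units), and handling the reduction to the case $v(q) > 0$ cleanly; the valuation argument ruling out non-root-of-unity divisible elements is routine.
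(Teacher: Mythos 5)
Your part (1) and the overall skeleton of part (2) (reduce to torally Kummer-faithful via Proposition \ref{KFreduction}, split the torus, control roots of unity by embedding $\mu_{\ell^{\infty}}(L')$ into the $\ell$-power torsion of a Tate curve) are consistent with the paper. The gap is in the step you declare routine: that every divisible element of $L'^{\times}$ is a root of unity. Both halves of that claim are false for a general algebraic extension $L'$ of $\mbb{Q}_p$, and the infinite extensions are exactly the case of interest. First, the value group of $L'$ need not be $\mbb{Z}$; it can be a divisible subgroup of $\mbb{Q}$ (e.g.\ for $\mbb{Q}_p(p^{1/n};\, n\ge 1)$, where $p$ itself is a divisible element of $L'^{\times}$ of nonzero valuation), so ``infinitely divisible valuation $\Rightarrow$ valuation zero'' fails. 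Second, the principal units of an infinite algebraic extension form a directed union of pro-$p$ groups rather than a pro-$p$ group, and can contain non-torsion divisible elements (e.g.\ $1+p$ in $\mbb{Q}_p((1+p)^{1/n};\, n\ge 1)$). So nothing in your argument excludes a divisible element of $L'^{\times}$ that is not a root of unity; to exclude one you must invoke the AVKF hypothesis again, which is precisely what your decomposition was meant to avoid.

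The paper's remedy is to feed the entire group $L^{\times}$ into the Tate curve at once: from the exact sequence $0\to p^{\mbb{Z}}\to L^{\times}\to E(L)$, with $E$ the Tate curve over $\mbb{Q}_p$ with parameter $p$, Lemma \ref{divlem} gives $(L^{\times})_{\mrm{div}}=0$ directly, using that $(p^{\mbb{Z}})_{\mrm{div}}=0$ as an abstract group, that $E(L)_{\mrm{div}}=0$ by AVKF, and that the $n$-torsion $\mu_n(L)$ is finite. Lemma \ref{divlem} is designed exactly for the subtlety your shortcut misses: an element of the subgroup $p^{\mbb{Z}}$ may admit $n$-th roots in $L^{\times}$ for every $n$ without being divisible inside $p^{\mbb{Z}}$, and the lemma's projective-limit argument produces a compatible system of division points landing in the subgroup. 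If you replace your valuation and unit-group steps by this single application of Lemma \ref{divlem}, your proof of (2) becomes the paper's; your treatment of (1) is already essentially identical to it.
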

\begin{proof}
Let $E_{/\mbb{Q}_p}$ be the Tate curve associated with uniformizing element $p$.
To show  (1), it suffices to prove that 
$\mu_{\ell^{\infty}}(L)$ is finite for any finite extension $L$ of $K$
under the assumption that $K$ is $\ell^{\infty}$-AV-tor-finite, 
but  this follows immediately from the existence of an injection
$\mu_{\ell^{\infty}}(L) \hookrightarrow E(L)[\ell^{\infty}]$.
To show  (2), 
it suffices to prove that the divisible part $(L^{\times})_{\mrm{div}}$  of $L^{\times}$ is trivial
for any finite extension $L$ of $K$
under the assumption that $K$ is AVKF, 
but  this also follows immediately from Lemma \ref{divlem} and
an exact sequence $0\to p^{\mbb{Z}}\to L^{\times}\to E(L)$ of abelian groups.
\end{proof}

\subsection{Unramified extensions and tamely ramified extensions}

In this section, we give criteria of Kummer-faithfulness
for unramified, or tamely ramified, extension fields  of some $p$-adic fields.
In addition, 
we show that Kummer-faithful fields that are Galois extensions of $p$-adic fields
admit a decomposition of quasi-finite extension fields and 
(possibly of infinite degree) $p$-power extensions.

Recall that 
$\mbb{F}_K$ is the residue field of an algebraic extension field $K$ of $\mbb{Q}_p$.
We say that a field $K$ is {\it stably $\mu_{\ell^{\infty}}$-finite} if
$\mu_{\ell^{\infty}}(L)$ is finite for any finite extension $L$ of $K$.

\begin{lemma}
\label{residue}
Let $K$ be an algebraic extension of $\mbb{Q}_p$.
Denote by $G^{\ell}_{\mbb{F}_K}$ 
the maximal pro-$\ell$ quotient of $G_{\mbb{F}_K}$
for any prime $\ell$.
\begin{itemize}
\item[{\rm (1)}] 
Assume $\ell\not=p$. Then 
$K$ is stably $\mu_{\ell^{\infty}}$-finite if and only if 
$G^{\ell}_{\mbb{F}_K}\simeq \mbb{Z}_{\ell}$.
\item[{\rm (2)}]
If $K$ is $p^{\infty}$-semi-AV-tor-finite, then 
$G^{p}_{\mbb{F}_K}\simeq \mbb{Z}_{p}$. 
\end{itemize}
\end{lemma}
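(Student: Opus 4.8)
The plan is to convert both assertions into statements about the supernatural degree $[\mbb{F}_K:\mbb{F}_p]$. The elementary input is that, $\mbb{F}_K/\mbb{F}_p$ being algebraic, $G_{\mbb{F}_K}$ is procyclic, so $G^{\ell}_{\mbb{F}_K}\cong\mbb{Z}_{\ell}$ precisely when $v_{\ell}([\mbb{F}_K:\mbb{F}_p])<\infty$ and $G^{\ell}_{\mbb{F}_K}=0$ otherwise; equivalently, $v_{\ell}([\mbb{F}_K:\mbb{F}_p])=\infty$ iff $\mbb{F}_K$ contains the subfield of $\overline{\mbb{F}_p}$ of degree $\ell^{\infty}$ over $\mbb{F}_p$ (the $\mbb{Z}_{\ell}$-extension). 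I will also use freely that for a finite extension $L/K$ the residue extension $\mbb{F}_L/\mbb{F}_K$ is finite, that conversely every finite extension of $\mbb{F}_K$ is the residue field of some finite unramified $L/K$, and hence $v_{\ell}([\mbb{F}_L:\mbb{F}_p])<\infty\iff v_{\ell}([\mbb{F}_K:\mbb{F}_p])<\infty$.

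For (1): since $\ell\neq p$, reduction modulo the maximal ideal gives an isomorphism $\mu_{\ell^{\infty}}(L)\xrightarrow{\sim}\mu_{\ell^{\infty}}(\mbb{F}_L)$ (Hensel/Teichm\"uller lifting of prime-to-$p$ roots of unity), so $K$ is stably $\mu_{\ell^{\infty}}$-finite iff $\mu_{\ell^{\infty}}(\mbb{F}')$ is finite for every finite $\mbb{F}'/\mbb{F}_K$. Next, $\mu_{\ell^{\infty}}(\mbb{F}')$ is infinite iff $[\mbb{F}_p(\mu_{\ell^{\infty}}):\mbb{F}_p]\mid[\mbb{F}':\mbb{F}_p]$, and $[\mbb{F}_p(\mu_{\ell^{\infty}}):\mbb{F}_p]=d_0\cdot\ell^{\infty}$ with $d_0=\mrm{ord}_{\ell}(p)\mid\ell-1$, the factor $\ell^{\infty}$ coming from $v_{\ell}(\mrm{ord}_{\ell^{n}}(p))\to\infty$ (lifting-the-exponent). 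If $G^{\ell}_{\mbb{F}_K}\cong\mbb{Z}_{\ell}$, every finite $\mbb{F}'/\mbb{F}_K$ has $v_{\ell}([\mbb{F}':\mbb{F}_p])<\infty$, hence $d_0\ell^{\infty}\nmid[\mbb{F}':\mbb{F}_p]$ and $\mu_{\ell^{\infty}}(\mbb{F}')$ is finite. If $G^{\ell}_{\mbb{F}_K}\not\cong\mbb{Z}_{\ell}$, then $\mbb{F}_K$ contains the $\mbb{Z}_{\ell}$-extension of $\mbb{F}_p$, so $\mbb{F}':=\mbb{F}_K(\mu_{\ell})$ is finite over $\mbb{F}_K$ and contains $\mbb{F}_p(\mu_{\ell^{\infty}})$; realizing $\mbb{F}'$ as the residue field of a finite unramified $L/K$ shows $K$ is not stably $\mu_{\ell^{\infty}}$-finite.

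For (2) I argue by contraposition: assume $G^{p}_{\mbb{F}_K}\not\cong\mbb{Z}_p$, so $\mbb{F}_K$ contains the $\mbb{Z}_p$-extension of $\mbb{F}_p$ and hence $K$ contains the unramified $\mbb{Z}_p$-extension $\mbb{Q}_p^{(p)}$ of $\mbb{Q}_p$; I must produce a finite $L/K$ and an abelian variety over $L$ with infinite $p$-power torsion. Fix an ordinary elliptic curve $\tilde{E}$ over $\mbb{F}_p$ and let $E$ be its Serre--Tate canonical lift, an elliptic curve over $\mbb{Q}_p$ with good reduction $\tilde{E}$; the defining property of the canonical lift is that $E[p^{\infty}]$ splits over $\mbb{Z}_p$ as $E[p^{\infty}]^{0}\oplus E[p^{\infty}]^{\aet}$. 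Hence for any finite unramified $L/\mbb{Q}_p$, using that an \'etale group scheme over the Henselian local ring $\cO_L$ has the same points as over $\mbb{F}_L$, we get $E(L)[p^{\infty}]=E[p^{\infty}](\cO_L)\supseteq E[p^{\infty}]^{\aet}(\cO_L)=\tilde{E}[p^{\infty}]^{\aet}(\mbb{F}_L)$. Now $\tilde{E}[p^{\infty}]^{\aet}$ is geometrically $\mbb{Q}_p/\mbb{Z}_p$ with Frobenius acting by the unit root $\alpha\in\mbb{Z}_p^{\times}$, so $\tilde{E}[p^{\infty}]^{\aet}(\mbb{F}_{p^{m}})=\ker\bigl(\alpha^{m}-1\colon\mbb{Q}_p/\mbb{Z}_p\to\mbb{Q}_p/\mbb{Z}_p\bigr)$ has order $p^{v_p(\alpha^{m}-1)}$; since $\bar{\alpha}\in\mbb{F}_p^{\times}$ we have $\alpha^{p-1}\equiv1\pmod p$, whence $v_p\bigl(\alpha^{(p-1)p^{n}}-1\bigr)=v_p(\alpha^{p-1}-1)+n\to\infty$ (lifting-the-exponent, with the usual adjustment at $p=2$). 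Thus $\tilde{E}[p^{\infty}]^{\aet}$ has infinitely many points over the residue field of $L_0$, where $L_0$ is the unramified extension of $\mbb{Q}_p^{(p)}$ whose residue field has degree $(p-1)p^{\infty}$ over $\mbb{F}_p$ (this $L_0$ is finite over $\mbb{Q}_p^{(p)}$). Taking $L:=K\cdot L_0$, finite over $K$, and $A:=E_{/L}$, we get $A(L)[p^{\infty}]\supseteq E(L_0)[p^{\infty}]$ infinite, so $K$ is not $p^{\infty}$-semi-AV-tor-finite.

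The reductions in (1) and the degree bookkeeping are routine; the step I expect to demand the most care is (2). The essential point is that the abundant $p$-power points on the special fibre must genuinely \emph{lift} to $p$-power torsion of $E$ over the non-complete (but Henselian) ring $\cO_{\mbb{Q}_p^{(p)}}$, which is exactly why one passes to a canonical lift so that the connected--\'etale sequence of $E[p^{\infty}]$ splits integrally; together with this, the arithmetic input that the unit root $\alpha$ of Frobenius becomes $\equiv1$ modulo $p$ after a bounded unramified twist, forcing arbitrarily large $p$-primary torsion along the unramified $\mbb{Z}_p$-tower, is what makes those \'etale points plentiful.
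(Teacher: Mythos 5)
Your argument is correct. Part (1) is essentially the paper's own proof: both reduce, via Teichm\"uller/Hensel lifting of prime-to-$p$ roots of unity, to the finiteness of $\mu_{\ell^{\infty}}$ in the residue field and then to the $\ell$-part of the supernatural degree $[\mbb{F}_K:\mbb{F}_p]$; your version merely makes the degree bookkeeping more explicit. Part (2) reaches the same conclusion by a genuinely different route. The paper takes a CM elliptic curve $E$ with good ordinary reduction over a $p$-adic subfield $k$ of $K$, all of whose endomorphisms are defined over $k$, writes the representation on $V_p(E)$ in upper-triangular form with unramified quotient character $\varepsilon$, notes that triviality of $G^{p}_{\mbb{F}_K}$ forces $\varepsilon|_{G_K}=1$ after a finite extension, and then exploits commutativity of the image to solve for the off-diagonal entry ($u=c(\chi_p-1)$ on $G_K$) and exhibit a nonzero $G_K$-fixed vector in $V_p(E)$. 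You instead arrange the relevant splitting integrally from the outset by taking the Serre--Tate canonical lift of an ordinary curve over $\mbb{F}_p$, so that $E[p^{\infty}]=E[p^{\infty}]^{0}\oplus E[p^{\infty}]^{\aet}$ over $\mbb{Z}_p$ and the $\cO_L$-points of the \'etale summand are computed on the special fibre; the unbounded growth of $p$-power torsion along the unramified $\mbb{Z}_p$-tower then follows from $v_p\bigl(\alpha^{(p-1)p^{n}}-1\bigr)\to\infty$ for the unit root $\alpha$. The two devices are of course cousins --- the canonical lift is itself a CM curve --- but your version trades the paper's Galois-cohomological step (showing the extension class dies on $G_K$) for a citation of Serre--Tate theory, and it produces explicit torsion points rather than an invariant line in $V_p(E)$. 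You correctly identify the one place where care is genuinely needed: for an arbitrary lift the connected--\'etale sequence of $E[p^{n}]$ over $\cO_L$ need not split, so the \'etale points of the special fibre would not lift; the canonical lift is exactly what removes this obstruction. Both arguments in fact give the slightly stronger conclusion that $K$ fails to be $p^{\infty}$-AV-tor-finite.
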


\begin{proof}
(1) We may suppose that $K$ contains $\mu_{\ell}$. 
Then the maximal pro-$\ell$ extension of $\mbb{F}_{K}$ is 
$\mbb{F}_{K}(\mu_{\ell^{\infty}})$, and 
$G^{\ell}_{\mbb{F}_K}
\simeq \mrm{Gal}(\mbb{F}_p(\mu_{\ell^{\infty}})/\mbb{F}_{K}\cap \mbb{F}_p(\mu_{\ell^{\infty}}))
\subset \mrm{Gal}(\mbb{F}_p(\mu_{\ell^{\infty}})/\mbb{F}_p(\mu_{\ell}))
\simeq \mbb{Z}_{\ell}$. 
Thus we have the following equivalent relations: 
$G^{\ell}_{\mbb{F}_K}\simeq \mbb{Z}_{\ell}$ 
$\Leftrightarrow $
$\# \mu_{\ell^{\infty}}(\mbb{F}_K)<\infty$
$\Leftrightarrow $
$\# \mu_{\ell^{\infty}}(K)<\infty$.
The result immediately follows.

(2) 
Assume  $G^{p}_{\mbb{F}_K}\not\simeq \mbb{Z}_{p}$, that is, $G^{p}_{\mbb{F}_K}$ is trivial. 
Take a CM elliptic curve  $E$ defined over a $p$-adic subfield $k$ of $K$
with the properties that 
$E$ has good ordinary  reduction over $k$ and 
every endomorphism of $E$ is defined over $k$. 
Let  $V_p(E)$ and $V_p(\bar{E})$ be the $p$-adic Tate module
of $E$ and that of the reduction $\bar{E}$ of $E$,
respectively.
With a suitable choice of a basis of $V_p(E)$,
the representation $\rho\colon G_k\to GL_{\mbb{Q}_p}(V_p(E))\simeq GL_2(\mbb{Q}_p)$
is of the form 
$$
\rho= 
\begin{pmatrix}
\chi_p \varepsilon^{-1} & u\\
0 & \varepsilon
\end{pmatrix},
$$
where $\chi_p$ is the $p$-adic cyclotomic character, $\varepsilon$
is an unramified character defined by the $G_k$-action on $V_p(\bar{E})$ 
 and $u$ is a continuous map. 
Since  $G^{p}_{\mbb{F}_K}$ is trivial, we find that  
an open subgroup of $G_{\mbb{F}_K}$ acts on $\bar{E}[p^{\infty}]$ trivial.
Replacing $K$ by a finite extension, 
we may assume that  $\bar{E}[p^{\infty}]$ is defined over  $\mbb{F}_K$.
This implies $G_K\subset \ker \varepsilon$.
Since  $\rho$ has an abelian image,
we have $\rho(\sigma)\rho(\tau)=\rho(\tau)\rho(\sigma)$
for any $\sigma,\tau\in  G_k$, which gives 
$
(\varepsilon^{-1}(\tau)\chi_p(\tau)-\varepsilon(\tau))u(\sigma)
=(\varepsilon^{-1}(\sigma)\chi_p(\sigma)-\varepsilon(\sigma))u(\tau).
$
In particular, for  $\sigma,\tau\in  G_K$, we have 
\begin{equation}
\label{CMeq}
(\chi_p(\tau)-1)u(\sigma)
=(\chi_p(\sigma)-1)u(\tau).
\end{equation}
Take an element $\tau_0\in G_K\smallsetminus \ker \chi_p$ 
(such an element exists since $p^{\infty}$-semi-AV-tor-finiteness of $K$ 
in particular implies 
$K$ is stably $\mu_{p^{\infty}}$-finite).
By \eqref{CMeq}, we have 
$u(\sigma)=\frac{u(\tau_0)(\chi_p(\sigma)-1)}{\chi_p(\tau_0)-1}$
for any $\sigma\in G_K$.
This shows that $u=c(\chi_p-1)$ on $G_K$ for some $c\in \mbb{Q}_p$.
We see that the vector $\mbf{x}={}^{\!t}(-c,1)$ satisfies $\rho(\sigma)\mbf{x}=\mbf{x}$
for any $\sigma\in G_K$.
Hence $V_p(E)^{G_K}$ is not zero, that is, 
$E(K)[p^{\infty}]$ is infinite. This contradicts the assumption that 
$K$ is  $p^{\infty}$-semi-AV-tor-finite.
\end{proof}

\begin{proposition}
\label{residueKF}
Let $K$ be an algebraic extension of $\mbb{Q}_p$.
\begin{itemize}
\item[{\rm (1)}] 
If $K$ is Kummer-faithful, then $\mbb{F}_K$ is quasi-finite, that is, 
$G_{\mbb{F}_K}\simeq \hat{\mbb{Z}}$. 

\item[{\rm (2)}] If $K$ is AV-tor-finite,  then $\mbb{F}_K$ is finite. 
\end{itemize}
\end{proposition}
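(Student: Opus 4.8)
The plan is to treat the two parts separately: (1) follows formally from the structural lemmas already established, while (2) requires producing, whenever $\mbb{F}_K$ is infinite, a single abelian variety over $K$ with infinitely many $K$-rational torsion points. For (1), if $K$ is Kummer-faithful then by Corollary \ref{fine=KF} (1) it is locally semi-AV-tor-finite, i.e.\ $\ell^{\infty}$-semi-AV-tor-finite for every prime $\ell$. For $\ell\neq p$ I would apply this to the torus $\mbb{G}_m$: then $\mu_{\ell^{\infty}}(L)=\mbb{G}_m(L)[\ell^{\infty}]$ is finite for every finite extension $L/K$, so $K$ is stably $\mu_{\ell^{\infty}}$-finite and Lemma \ref{residue} (1) gives $G^{\ell}_{\mbb{F}_K}\simeq\mbb{Z}_{\ell}$; for $\ell=p$, Lemma \ref{residue} (2) gives $G^{p}_{\mbb{F}_K}\simeq\mbb{Z}_p$ directly. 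Since $G_{\mbb{F}_K}$ is a closed subgroup of $G_{\mbb{F}_p}\simeq\hat{\mbb{Z}}=\prod_{\ell}\mbb{Z}_{\ell}$, it has the form $\prod_{\ell}\ell^{m_{\ell}}\mbb{Z}_{\ell}$ with $m_{\ell}\in\mbb{Z}_{\ge 0}\cup\{\infty\}$ and $G^{\ell}_{\mbb{F}_K}\simeq\ell^{m_{\ell}}\mbb{Z}_{\ell}$; the above says every $m_{\ell}$ is finite, which is exactly the statement that $[\mbb{F}_K:\mbb{F}_p]=\prod_{\ell}\ell^{m_{\ell}}$ is quasi-finite (equivalently $G_{\mbb{F}_K}\simeq\hat{\mbb{Z}}$).

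For (2) I argue by contradiction, assuming $\mbb{F}_K$ is infinite. Fix a supersingular elliptic curve over $\mbb{F}_p$ (these exist for every $p$) and lift a Weierstrass equation of it to one over $\mbb{Z}_p$; since the discriminant remains a unit, this defines an elliptic curve $E$ over $\mbb{Q}_p\subset K$ with good, supersingular reduction $\bar E/\mbb{F}_p$. As $\bar E$ is supersingular, $\bar E[p](\overline{\mbb{F}}_p)=0$, hence $|\bar E(\mbb{F}_q)|$ is prime to $p$ for every finite extension $\mbb{F}_q/\mbb{F}_p$. Since $\mbb{F}_K$ is infinite it contains $\mbb{F}_{p^{d}}$ for an increasing sequence of positive integers $d$, and the Hasse bound gives $|\bar E(\mbb{F}_{p^{d}})|\ge(p^{d/2}-1)^2\to\infty$; therefore $\bar E(\mbb{F}_K)$ is an infinite torsion group of order prime to $p$.

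It remains to lift this torsion to $E(K)$. Here $\cO_K$ is a Henselian local ring (an algebraic extension of the complete field $\mbb{Q}_p$) with residue field $\mbb{F}_K$; taking the smooth proper model $\mathcal{E}/\mbb{Z}_p$ of $E$, the valuative criterion of properness together with smoothness over $\cO_K$ yield a surjective reduction map $E(K)=\mathcal{E}(\cO_K)\twoheadrightarrow\bar E(\mbb{F}_K)$ with kernel $E_1(K)\simeq\hat{\mathcal{E}}(\mfrak{m}_K)$, where $\hat{\mathcal{E}}$ is the formal group of $\mathcal{E}$. Since $[\ell]$ is invertible on $\hat{\mathcal{E}}$ over $\cO_K$ for every $\ell\neq p$, the group $E_1(K)$ is uniquely $\ell$-divisible for all such $\ell$, hence contains no nontrivial prime-to-$p$ torsion. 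A short diagram chase --- given $\bar Q\in\bar E(\mbb{F}_K)$ of order $m$ prime to $p$, lift it to $Q\in E(K)$, write $mQ=mR$ with $R\in E_1(K)$, and replace $Q$ by $Q-R$ --- then shows the prime-to-$p$ torsion subgroup of $E(K)$ maps isomorphically onto $\bar E(\mbb{F}_K)$, so $E(K)_{\mrm{tor}}$ is infinite, contradicting AV-tor-finiteness of $K$ (applied with $L=K$, $A=E$). The main things to be careful about are the existence over $\mbb{Q}_p$ of a supersingular curve with good reduction and the bookkeeping for the reduction map over the (possibly non-discrete) valuation ring $\cO_K$; both are routine.
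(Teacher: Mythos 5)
Your part (1) is essentially the paper's own argument: the paper likewise reduces to Lemma \ref{residue} by observing that a Kummer-faithful $K$ is stably $\mu_{\ell^{\infty}}$-finite for all $\ell$ and $p^{\infty}$-semi-AV-tor-finite, and your explicit derivation of these facts from Corollary \ref{fine=KF} (1) applied to $\mbb{G}_m$, together with the identification of $G^{\ell}_{\mbb{F}_K}$ with the $\ell$-Sylow factor of the closed subgroup $G_{\mbb{F}_K}\subset\hat{\mbb{Z}}$, is a correct filling-in of the same route. Your part (2) is also correct but goes a genuinely different way. The paper takes the Tate curve $E_{/\mbb{Q}_p}$: since $\mbb{F}_K$ infinite forces $K$ to contain $\mu_{p^{m_i}-1}$ for $m_i\to\infty$ (Teichm\"uller lifts), and roots of unity inject into $E(K)\simeq K^{\times}/q^{\mbb{Z}}$ as torsion points, one gets infinite torsion with no reduction theory at all --- the argument is three lines. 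You instead take an elliptic curve over $\mbb{Q}_p$ with good supersingular reduction, note that $\bar{E}(\mbb{F}_K)$ is infinite prime-to-$p$ torsion by the Hasse bound, and lift it along the reduction map $E(K)\twoheadrightarrow\bar{E}(\mbb{F}_K)$ using that the kernel $\hat{\mathcal{E}}(\mfrak{m}_K)$ is uniquely $\ell$-divisible for $\ell\neq p$ (everything taking place in the union of the complete rings $\cO_L$ over finite subextensions $L$, so convergence and surjectivity are unproblematic). Your version costs more machinery (existence of supersingular curves over $\mbb{F}_p$, reduction theory over the non-discrete Henselian ring $\cO_K$) but buys a strictly stronger conclusion: the counterexample has good --- even potentially good and supersingular --- reduction, which shows that the failure of AV-tor-finiteness for infinite residue field cannot be blamed on multiplicative degeneration; the paper's version is shorter and needs only the explicit uniformization of the Tate curve. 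Both are complete proofs.
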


\begin{proof}
(1) Since Kummer-faithful fields over $p$-adic fields 
are stably $\mu_{\ell^{\infty}}$-finite
for any prime $\ell$ and $p^{\infty}$-semi-AV-tor-finite, 
the result follows immediately from Lemma \ref{residue}.

(2) Assume that $\mbb{F}_K$ is infinite.
There exists an increasing extensions
$\mbb{F}_{p^{m_1}}\subset \mbb{F}_{p^{m_2}}\subset \cdots $
of subfields of $\mbb{F}_K$ with $m_1<m_2<\cdots $.
This in particular implies that $\mbb{F}_K$ contains all the $(p^{m_i}-1)$-th 
roots of unity  for all $i$, and the same holds for $K$.
Hence, for the Tate curve $E_{/\mbb{Q}_p}$ associated with any choice of uniformizing element, 
$E(K)$ contains infinitely many torsion points but this  contradicts the assumption that 
$K$ is AV-tor-finite.
\end{proof}

\begin{corollary}
\label{unram}
Let $K$ be an unramified extension of some $p$-adic field. 
Then, the following are equivalnet.
\begin{itemize}
\item[{\rm (a)}] $K$ is Kummer-faithful.
\item[{\rm (b)}] $K/\mbb{Q}_p$ is quasi-finite. 
\item[{\rm (c)}] $\mbb{F}_K$ is Kummer-faithful.
\end{itemize}
\end{corollary}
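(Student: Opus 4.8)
The plan is to prove the cycle of implications (a) $\Rightarrow$ (b) $\Rightarrow$ (c) $\Rightarrow$ (a). For (a) $\Rightarrow$ (b): if $K$ is Kummer-faithful, then by Proposition \ref{residueKF} (1) the residue field $\mbb{F}_K$ is quasi-finite, i.e.\ $G_{\mbb{F}_K}\simeq\hat{\mbb{Z}}$. Since $K$ is an unramified extension of a $p$-adic field $k$, we have $K=k\cdot \mbb{Q}_p^{\mrm{ur}}\cap(\text{something})$; more precisely $K$ is determined by its residue field extension, so $[K:\mbb{Q}_p]=[k:\mbb{Q}_p]\cdot[\mbb{F}_K:\mbb{F}_k]$ as supernatural numbers. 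As remarked at the end of Section 2, for an unramified extension of a $p$-adic field, $K/\mbb{Q}_p$ is quasi-finite if and only if the residue field extension $\mbb{F}_K/\mbb{F}_p$ is quasi-finite; the latter holds because $G_{\mbb{F}_K}\simeq\hat{\mbb{Z}}$ forces each $\ell$-part of $[\mbb{F}_K:\mbb{F}_p]$ to be finite. This gives (b).

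For (b) $\Rightarrow$ (c): if $K/\mbb{Q}_p$ is quasi-finite, then $\mbb{F}_K/\mbb{F}_p$ is quasi-finite (same equivalence), so $G_{\mbb{F}_K}\simeq\hat{\mbb{Z}}$; by the criterion recalled in Section 2 (an algebraic extension $\mbb{E}$ of a finite field $\mbb{F}$ has $G_{\mbb{E}}\simeq\hat{\mbb{Z}}$ iff $\mbb{E}/\mbb{F}$ is quasi-finite) combined with Murotani's theorem cited in the introduction (an algebraic extension of $\mbb{F}_p$ is Kummer-faithful iff its absolute Galois group is $\hat{\mbb{Z}}$), we conclude that $\mbb{F}_K$ is Kummer-faithful. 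This is (c).

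For the remaining implication (c) $\Rightarrow$ (a), which I expect to be the substantive one: assume $\mbb{F}_K$ is Kummer-faithful, equivalently $G_{\mbb{F}_K}\simeq\hat{\mbb{Z}}$, equivalently (via the Section 2 equivalence) $\mbb{F}_K/\mbb{F}_p$ is quasi-finite, hence $K/\mbb{Q}_p$ is quasi-finite. By Proposition \ref{KF=AVKF} (2) it suffices to show $K$ is AVKF, and since $\mbb{Q}_p$ is Kummer-faithful (hence AVKF) and $K/\mbb{Q}_p$ is a quasi-finite Galois extension (unramified extensions are Galois when Galois over the base — here one should be slightly careful: an unramified extension of $k$ need not be Galois over $\mbb{Q}_p$ if $k$ isn't, but $K/\mbb{Q}_p^{\mrm{ur}}$ considerations or passing to the compositum handle this), I would invoke Proposition \ref{fine:ppl:fine}: a quasi-finite extension of a locally semi-AV-tor-finite field is locally semi-AV-tor-finite, and $\mbb{Q}_p$ is locally semi-AV-tor-finite by Corollary \ref{fine=KF} (1). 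Then $K$ is locally semi-AV-tor-finite, and by Corollary \ref{fine=KF} (2) — applicable since $K$ is an extension of the Kummer-faithful field $\mbb{Q}_p$, and one needs $K/\mbb{Q}_p$ or $K$ over some Kummer-faithful subfield to be Galois — we get that $K$ is Kummer-faithful.

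The main obstacle is the Galois hypothesis bookkeeping: Corollary \ref{fine=KF} (2) and Proposition \ref{div} require a Galois extension of a Kummer-faithful base. An unramified extension $K$ of a $p$-adic field $k$ is automatically Galois over $k$ (unramified extensions of complete discrete valuation fields with residue extension Galois — and residue extensions of finite fields are always Galois), so I would take $k$ itself as the Kummer-faithful base: $k$ is sub-$p$-adic hence Kummer-faithful, $K/k$ is Galois and quasi-finite, Proposition \ref{fine:ppl:fine} gives $K$ locally semi-AV-tor-finite, and Corollary \ref{fine=KF} (2) applied with base $k$ yields (a). This sidesteps any issue about $K/\mbb{Q}_p$ being Galois. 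I would also double-check that "unramified extension of some $p$-adic field" in the hypothesis means the residue field extension can be taken inside $\overline{\mbb{F}}_p$ so that the Section 2 equivalence between quasi-finiteness of $K/\mbb{Q}_p$ and of $\mbb{F}_K/\mbb{F}_p$ applies verbatim.
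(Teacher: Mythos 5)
Your proof is correct and follows essentially the same route as the paper, which simply cites Murotani's theorem for the equivalence (b) $\Leftrightarrow$ (c) and Propositions \ref{residueKF}, \ref{fine:ppl:fine} and \ref{div} (the latter two in the guise of Corollary \ref{fine=KF}) for (a) $\Leftrightarrow$ (b). Your extra care in taking $k$ rather than $\mbb{Q}_p$ as the Kummer-faithful Galois base is exactly the right way to justify the step the paper leaves implicit.
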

\begin{proof}
It is shown by Murotani \cite[Theorem B]{Mur23b} that $\mbb{F}_K$ is Kummer-faithful
if and only if $G_{\mbb{F}_K}\simeq \hat{\mbb{Z}}$. 
By assumption on $K$, this is equivalent to say that $K/\mbb{Q}_p$ is quasi-finite. 
Thus the result immediately follows from Propositions 
and 
\ref{div}, 
\ref{fine:ppl:fine}
and 
\ref{residueKF}.
\end{proof}
\begin{remark}
Let $L$ be the completion of an algebraic extension, with finite ramification, of some $p$-adic field.
Then, it follows from \cite[Proposition 3.7]{Mur23b} that,
if the residue field of $L$ is Kummer-faithful,
then $L$ is Kummer-faithful (see also \cite[Proposition 1.4]{Tsu23}).
Thus the equivalent conditions (a), (b) and (c) 
in Corollary \ref{unram} are also equivalent to the following condition:
\begin{itemize}
\item[(d)] The completion of $K$ is Kummer-faithful. 
\end{itemize}
\end{remark}

Next we consider Kummer-faithfulness for tamely ramified extensions.

\begin{lemma}
\label{subquasi-finite}
Let $K$ be an algebraic extension of a $p$-adic field $k$.
Let $M$ {\rm (}resp.\ $N${\rm )} be the maximal unramified 
(resp.\ maximal tamely ramified) extension of $k$ contained in $K$. 
\begin{itemize}
\item[{\rm (1)}] If $K$ is Kummer-faithful,
then $M/k$ is quasi-finite. 

\item[{\rm (2)}] If $K$ is torally Kummer-faithful
and $N$ is a Galois extension of $M$,
then  $N/M$ is quasi-finite. 

\item[{\rm (3)}] If $K$  is Kummer-faithful and is a Galois extension of $k$, then $N/k$ is quasi-finite.
\end{itemize}
\end{lemma}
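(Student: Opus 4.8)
The plan is to treat the three assertions in turn, reducing (1) and (3) to the tools already assembled in this section, and handling (2) by a direct Kummer-theoretic computation with tori. For (1): the residue field $\mbb{F}_M$ of $M$ coincides with $\mbb{F}_K$, and since $M/k$ is unramified, the extension degree $[M:k]$ equals $[\mbb{F}_M:\mbb{F}_k]=[\mbb{F}_K:\mbb{F}_k]$ as supernatural numbers. By Proposition \ref{residueKF} (1), if $K$ is Kummer-faithful then $G_{\mbb{F}_K}\simeq\hat{\mbb{Z}}$, which forces $\mbb{F}_K/\mbb{F}_k$ to be quasi-finite (a finite residue field extension of $\mbb{F}_p$ has absolute Galois group $\hat{\mbb{Z}}$ iff the extension is quasi-finite); hence $M/k$ is quasi-finite.

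For (2): here $N/M$ is the maximal tamely ramified subextension, so $N$ is obtained from $M$ by adjoining roots of uniformizers, and by the structure of tame extensions, $\mrm{Gal}(N/M)$ is (a quotient of) a procyclic pro-prime-to-$p$ group; more precisely, after enlarging $M$ by the relevant roots of unity, $N$ sits inside $M(\varpi^{1/n} : n\ \text{prime to}\ p)$ for a uniformizer $\varpi$ of $M$. The point is that a nontrivial $\ell$-divisible element in the Mordell-Weil group of a torus over a finite extension of $M$ is produced precisely when $N/M$ contains a $\mbb{Z}_\ell$-extension for some $\ell\ne p$: indeed $\varpi$ (or a unit multiple) becomes $\ell$-divisible in $\overline{M}^{\times}$, and the $\ell^n$-th roots of $\varpi$ generate a tamely ramified $\mbb{Z}_\ell$-tower. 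So if $N/M$ is not quasi-finite, some prime $\ell\ne p$ divides $[N:M]$ to infinite order, and then for a finite extension $L/M$ inside $K$ containing $\mu_\ell$ one finds a nontrivial divisible element in $\mbb{G}_m(L)=L^{\times}$, contradicting toral Kummer-faithfulness. I would phrase this via the Kummer sequence $L^\times/\ell^n \hookrightarrow H^1(G_L,\mu_{\ell^n})$ and the fact that valuations of $\ell^n$-divisible elements must be infinitely $\ell$-divisible, hence zero, so the obstruction really lives in the tame inertia.

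For (3): combine (1) and (2). Since $K/k$ is Galois, so is $M/k$ (it is characterized as the maximal unramified subextension, which is Galois-stable), and $N/k$ is Galois as well; in particular $N/M$ is Galois, so (2) applies and gives $N/M$ quasi-finite, while (1) gives $M/k$ quasi-finite. By multiplicativity of extension degrees for supernatural numbers, $[N:k]=[N:M]\cdot[M:k]$ is then quasi-finite (the lcm/product of two supernatural numbers with all exponents finite again has all exponents finite). The main obstacle I anticipate is step (2): one must argue carefully that the tame part of $N/M$ being "infinite at $\ell$" genuinely forces $\varpi$ — or an appropriate $S$-unit — to acquire a nontrivial divisible image in $L^\times$ for some finite $L$, and that passing to a finite extension to pick up $\mu_\ell$ does not destroy this; this is where the hypothesis that $N$ (equivalently $K$'s tame part) is \emph{Galois} over $M$ is used, to ensure the tame quotient is procyclic of the expected shape rather than something more complicated.
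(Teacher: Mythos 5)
Your treatments of (1) and (3) match the paper's: (1) reduces to Proposition \ref{residueKF} (the paper routes it through Corollary \ref{unram}, which rests on that proposition), and (3) is obtained from (1) and (2) plus multiplicativity of supernatural degrees, exactly as in the paper.

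Part (2), however, has a genuine gap. The contradiction you aim for is ``a nontrivial divisible element of $L^{\times}$'', but what your construction actually produces is only an \emph{$\ell$-divisible} element: a uniformizer $\varpi$ admitting $\ell^{n}$-th roots in $N$ for all $n$. Toral Kummer-faithfulness only forbids nonzero elements of $T(L)_{\mathrm{div}}=\bigcap_{n>0}nT(L)$, i.e.\ elements divisible by \emph{every} integer, and an element that is $\ell$-divisible for a single prime $\ell$ gives no contradiction. This is not a wording issue that can be patched: for $\ell\neq p$, the field obtained by adjoining to $\mbb{Q}_p$ a compatible system of $\ell$-power roots of $p$ is totally tamely ramified with $v_{\ell}$ of the degree infinite and contains an infinitely $\ell$-divisible uniformizer, yet it is known to be Kummer-faithful (cf.\ \cite{Tsu23}); so the mechanism ``infinite $\ell$-part of the tame degree $\Rightarrow$ divisible element of $\mbb{G}_m$'' is simply false without the Galois hypothesis, and with it the divisible elements one obtains are not uniformizers.

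The paper's proof extracts the correct consequence of the Galois hypothesis: a finite totally tamely ramified \emph{Galois} extension $M_i/M$ of degree $e_i$ must contain $\mu_{e_i}$ (Lang, \emph{Algebraic Number Theory}, Ch.~II, \S 5, Prop.~12). Hence $v_{\ell}([N:M])=\infty$ forces $\mu_{\ell^{\infty}}\subset N\subset K$, and \emph{this} contradicts toral Kummer-faithfulness: once all of $\mu_{\ell^{\infty}}$ lies in $K$, each of its elements is genuinely divisible in $K^{\times}$ (prime-to-$\ell$ roots of an $\ell$-power root of unity can be taken inside $\mu_{\ell^{\infty}}$ itself), equivalently the unconditional implication $(a)\Rightarrow(b)$ of Proposition \ref{div} shows that a torally Kummer-faithful field is stably $\mu_{\ell^{\infty}}$-finite. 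To repair your argument, replace the uniformizer by the roots of unity that the Galois condition forces into $N$.
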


\begin{proof}
The assertion (1) follows from Corollary \ref{unram},
and (3) follows from (1) and (2). Thus it suffices to show (2).
Assume that there exists a prime  $\ell$ such that 
$v_{\ell}([N:M])=\infty$.
Note that  $\ell$ is not equal to $p$
since $N/M$ is totally tamely ramified. 
There exists an infinite set of finite Galois subextensions $\{M_i\}_{i\in \mbb{M}}$ in  $N/M$
with the property that 
$v_{\ell}(e_i)<v_{\ell}(e_{i+1})$ where $e_i:=[M_i:M]$.
Since $M_i$ is a Galois extension of $M$, it follows from
\cite[Chapter II, \S 5, Proposition 12]{Lan94} that
$M_i$ contains $e_i$-th roots of unity. Since $\lim_{i\to \infty}v_{\ell}(e_i)=\infty$,
it follows that $N$ contains all $\ell$-power roots of unity.
This contradicts the assumption that $K$ is torally Kummer-faithful. 
\end{proof}
\begin{corollary}
\label{tame}
Let $K$ be a Galois extension of some $p$-adic fields.
Then, the following are equivalent.
\begin{itemize}
\item[{\rm (a)}] $K/\mbb{Q}_p$ is tame and $K$ is Kummer-faithful.
\item[{\rm (b)}] $K/\mbb{Q}_p$ is quasi-finite. 
\item[{\rm (c)}] 
$K/\mbb{Q}_p$ is tame, $K$ is torally Kummer-faithful 
and $\mbb{F}_K$ is Kummer-faithful.
\end{itemize}
\begin{proof}
$(a)\Rightarrow (b)$ follows from Lemma \ref{subquasi-finite} (3).
$(b)\Rightarrow (a)$ follows from 
Propositions 
\ref{div} and 
\ref{fine:ppl:fine}.
$(a), (b)\Rightarrow (c)$ follows from Corollary \ref{unram}.
$(c)\Rightarrow (b)$ follows from Corollary \ref{unram}
and Lemma \ref{subquasi-finite} (2).
\end{proof}
\end{corollary}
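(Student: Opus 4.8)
The plan is to prove the four-part equivalence in Corollary \ref{tame} by a cyclic chain of implications, leaning on the unramified case (Corollary \ref{unram}), the tame/toral estimate (Lemma \ref{subquasi-finite}), and the two ``quasi-finite extensions preserve good properties'' results (Propositions \ref{div} and \ref{fine:ppl:fine}). The key structural point throughout is that for a tame Galois extension $K/\mbb{Q}_p$, if $M$ denotes the maximal unramified subextension, then $M/\mbb{Q}_p$ being quasi-finite plus $K/M$ being quasi-finite (prime-to-$p$, since tame) is exactly what ``$K/\mbb{Q}_p$ quasi-finite'' means.

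For $(a)\Rightarrow(b)$: if $K/\mbb{Q}_p$ is tame and $K$ is Kummer-faithful, then since $K$ is a Galois extension of $\mbb{Q}_p$, Lemma \ref{subquasi-finite}~(3) (applied with $k=\mbb{Q}_p$, so that the maximal tame subextension $N$ equals $K$ itself) gives $N/\mbb{Q}_p=K/\mbb{Q}_p$ quasi-finite. For $(b)\Rightarrow(a)$: first, $K/\mbb{Q}_p$ quasi-finite is in particular prime-to-$p$ in the wild part, hence tame, since any wild ramification would force $p^{\infty}\mid [K:\mbb{Q}_p]$. Next, $\mbb{Q}_p$ is a $p$-adic field, hence sub-$p$-adic, hence Kummer-faithful; a quasi-finite extension is a (potentially) prime-to-$\ell$ extension for every $\ell$, so Proposition \ref{fine:ppl:fine} shows $K$ is locally semi-AV-tor-finite, and then Proposition \ref{div} (using that $\mbb{Q}_p$ is Kummer-faithful and $K/\mbb{Q}_p$ Galois) upgrades this to $K$ Kummer-faithful; here one should also remark that $K$ inherits torally Kummer-faithfulness the same way, so the full Kummer-faithful conclusion follows from Proposition \ref{KFreduction}, or more directly from the Kummer-faithful case of Proposition \ref{div}. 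For $(a),(b)\Rightarrow(c)$: toral Kummer-faithfulness is inherited by subfields of Kummer-faithful fields, so $K$ is torally Kummer-faithful; and since $K/\mbb{Q}_p$ is unramified-then-tame, $\mbb{F}_K$ is the residue field of the maximal unramified subextension $M$, which is quasi-finite by $(b)$, so $G_{\mbb{F}_K}\simeq\hat{\mbb{Z}}$, whence $\mbb{F}_K$ is Kummer-faithful by Corollary \ref{unram}~$(b)\Leftrightarrow(c)$ (equivalently by Murotani's theorem). Finally $(c)\Rightarrow(b)$: given $(c)$, the maximal unramified subextension $M/\mbb{Q}_p$ is quasi-finite by Corollary \ref{unram} (its residue field is $\mbb{F}_K$, which is Kummer-faithful), and since $K$ is torally Kummer-faithful and $K/M$ is Galois and tame, Lemma \ref{subquasi-finite}~(2) gives $K/M$ quasi-finite; multiplying, $[K:\mbb{Q}_p]=[K:M][M:\mbb{Q}_p]$ is quasi-finite.

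I do not expect a genuine obstacle here: every implication is a direct citation of a result already in hand, and the only care needed is bookkeeping about which subfield plays the role of ``$k$'' in Lemma \ref{subquasi-finite} and making sure the tame hypothesis is used to identify $\mbb{F}_K$ with the residue field of the unramified part (so that ``$K/\mbb{Q}_p$ quasi-finite'' really does decompose as ``unramified part quasi-finite'' times ``tame prime-to-$p$ part quasi-finite''). If anything is mildly delicate it is the step $(b)\Rightarrow(a)$, where one must confirm that $K$ is torally Kummer-faithful and AVKF separately before invoking Proposition \ref{KFreduction} --- but both follow from Proposition \ref{div} applied to tori and to abelian varieties respectively, exactly as in the Kummer-faithful case, so this is routine. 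The proof is therefore short, and I would present it in precisely the order $(a)\Rightarrow(b)\Rightarrow(a)$, then $(a),(b)\Rightarrow(c)$, then $(c)\Rightarrow(b)$, matching the author's own outline.
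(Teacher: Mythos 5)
Your proof follows the paper's own argument step for step --- the same four implications, reduced to the same cited results --- and everything you write about the Kummer-faithfulness content is correct. Two small bookkeeping points: in $(a)\Rightarrow(b)$ the corollary only assumes $K$ is Galois over \emph{some} $p$-adic field $k_0$, not over $\mbb{Q}_p$, so Lemma \ref{subquasi-finite}~(3) should be applied with $k=k_0$ (then $N=K$ because $K/k_0$ is tame, and $K/\mbb{Q}_p$ is quasi-finite since $k_0/\mbb{Q}_p$ is finite); likewise in $(c)\Rightarrow(b)$ one should take $M$ to be the maximal unramified extension of $k_0$ in $K$, whose residue field is still $\mbb{F}_K$, so your argument goes through unchanged.

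The one genuine error is in $(b)\Rightarrow(a)$: your claim that quasi-finiteness of $K/\mbb{Q}_p$ forces tameness ``since any wild ramification would force $p^{\infty}\mid[K:\mbb{Q}_p]$'' is false. A finite wildly ramified extension such as $\mbb{Q}_p(\mu_{p^2})$ is quasi-finite over $\mbb{Q}_p$ (being finite), Galois over $\mbb{Q}_p$, and Kummer-faithful, yet not tame; only \emph{infinite} wild ramification contributes $p^{\infty}$ to the degree. So condition (b) cannot literally imply the tameness clause of (a), and the corollary must be read --- as the Introduction's phrasing ``if we restrict our attention to tamely ramified\dots extensions'' indicates --- with ``$K/\mbb{Q}_p$ tame'' as a standing hypothesis rather than as something to be deduced. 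The paper's own proof of $(b)\Rightarrow(a)$ silently addresses only the Kummer-faithfulness, which is exactly the part your argument (quasi-finite $\Rightarrow$ potentially prime-to-$\ell$ for every $\ell$ $\Rightarrow$ locally semi-AV-tor-finite by Proposition \ref{fine:ppl:fine} $\Rightarrow$ Kummer-faithful by Proposition \ref{div}, applied over a $p$-adic subfield of $K$ over which $K$ is Galois) handles correctly, with no need for the detour through Proposition \ref{KFreduction}.
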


\begin{proposition}
\label{KF:structure1}
Let $K$ be a Galois extension of a $p$-adic field.
Then  the following are equivalent.
\begin{itemize}
\item[{\rm (a)}] $K$ is Kummer-faithful.
\item[{\rm (b)}] $K$ has a decomposition $K=MN$. Here, 
\begin{itemize}
\item $M$ is Kummer-faithful with {\rm (}possibly infinite{\rm )} $p$-power degree over a $p$-adic field, and 
\item $N$ is a  quasi-finite Galois extension of a $p$-adic field. 
\end{itemize}
\end{itemize}
If $K$ is Kummer-faithful and is abelian extension of  a $p$-adic field,
then we can choose $N$ above
so that it is an unramified  quasi-finite extension of a $p$-adic field. 
\end{proposition}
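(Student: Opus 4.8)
My plan is to prove the two implications separately and then add the abelian refinement; the implication $(b)\Rightarrow(a)$ is formal, while $(a)\Rightarrow(b)$ rests on a profinite group-theoretic construction.

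For $(b)\Rightarrow(a)$, suppose $K=MN$ as in (b), with $N/k_N$ quasi-finite Galois over a $p$-adic field $k_N$ and $M$ Kummer-faithful, and fix the $p$-adic field $k$ with $K/k$ Galois provided by the standing hypothesis. Since $k$ is sub-$p$-adic, hence Kummer-faithful, Corollary \ref{fine=KF} (2) reduces the Kummer-faithfulness of $K$ to its local semi-AV-tor-finiteness, and by Corollary \ref{fine=KF} (1) together with Proposition \ref{fine:ppl:fine} it then suffices to exhibit a Kummer-faithful field over which $K$ is quasi-finite. The only subtlety is that $M$ and $N$ need not be nicely nested; I would pass to $\tilde M:=Mk_N$, which is still Kummer-faithful (finite over $M$), so that $k_N\subseteq\tilde M\cap N\subseteq N$, whence $N/(\tilde M\cap N)$ is Galois and $K=\tilde M N$ with $[K:\tilde M]=[N:\tilde M\cap N]$ a divisor of the quasi-finite degree $[N:k_N]$. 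Thus $K/\tilde M$ is quasi-finite and we are done.

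For $(a)\Rightarrow(b)$, fix a $p$-adic field $k$ with $K/k$ Galois, put $G:=\mrm{Gal}(K/k)$, and let $N\subseteq K$ be the maximal tamely ramified subextension of $K/k$. Then $N/k$ is Galois, and by Lemma \ref{subquasi-finite} (3) it is quasi-finite, so $N$ is already a quasi-finite Galois extension of the $p$-adic field $k$. Write $P:=\mrm{Gal}(K/N)$, the wild inertia subgroup of $G$: it is a closed normal pro-$p$ subgroup, and $G/P=\mrm{Gal}(N/k)$ is quasi-finite and pro-solvable (its finite quotients are Galois groups of finite tame extensions of local fields, hence metacyclic). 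The heart of the argument is to find a closed subgroup $C\le G$ with $C\cap P=\{1\}$ and $CP$ open in $G$. Granting this and setting $M:=K^C$, one checks at once that $MN=K^{C\cap P}=K$, that $M$ is Kummer-faithful as a subfield of $K$, that $M\cap N=K^{CP}$ is finite over $k$ and hence a $p$-adic field, and that $[M:M\cap N]=(CP:C)=|P|$ is a $p$-power; so $M$ and $N$ furnish the decomposition. To construct $C$: as $G/P$ is quasi-finite, the $p$-part of its order is a finite power of $p$, so a Hall $p'$-subgroup $\bar H$ of the pro-solvable group $G/P$ (profinite form of Ph.\ Hall's theorem) has finite index, i.e.\ is open; its preimage $\hat H\le G$ is open, contains $P$, and has $\hat H/P\cong\bar H$ of order prime to $p$, so the profinite Schur--Zassenhaus theorem (the normal subgroup $P$ being pro-$p$, of order prime to that of the quotient) yields a closed complement $C$ with $C\cap P=\{1\}$ and $CP=\hat H$ open in $G$.

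For the abelian refinement, assume moreover $K/k$ abelian, so $G/P=\mrm{Gal}(N/k)$ is abelian. The relation ``Frobenius conjugates tame inertia to its $q$-th power'' (with $q:=\#\mbb F_k$) then forces the image of tame inertia in $G/P$ to be annihilated by $q-1$, hence to be a finite subgroup $I_0$; decomposing the quasi-finite abelian group $G/P=\prod_\ell A_\ell$ into its (finite) pro-$\ell$ components, the subgroup $\bar C':=\prod_{\ell\notin S}A_\ell$, where $S$ is the finite set of primes dividing $|I_0|$, is open in $G/P$ and meets $I_0$ trivially, so its fixed field $k':=N^{\bar C'}$ is a finite, hence $p$-adic, extension of $k$ over which $N$ is unramified (and still quasi-finite and Galois); the same $N$ continues to satisfy $MN=K$. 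The step I expect to be the main obstacle is the construction of $C$ in $(a)\Rightarrow(b)$: one must ensure the Hall subgroup is open — which is precisely where Kummer-faithfulness enters, through the quasi-finiteness of $N/k$ supplied by Lemma \ref{subquasi-finite} — and one must invoke the correct profinite versions of Hall's and Schur--Zassenhaus's theorems for the relevant classes of profinite groups; the only other delicate point, the bookkeeping of extension degrees in $(b)\Rightarrow(a)$, is handled by the passage to $Mk_N$.
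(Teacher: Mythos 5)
Your proof is correct and follows essentially the same route as the paper: the same choice of $N$ (the maximal tamely ramified subextension, quasi-finite by Lemma \ref{subquasi-finite} (3)), the same construction of $M$ as the fixed field of a closed complement of the pro-$p$ group $\mathrm{Gal}(K/N)$ inside an open subgroup with prime-to-$p$ quotient, and the same finiteness-of-tame-inertia observation for the abelian refinement. Your Hall-subgroup-plus-profinite-Schur--Zassenhaus argument is precisely what the paper achieves by first shrinking $N/k$ to a prime-to-$p$ extension and then citing Iwasawa's splitting lemma, and your treatment of $(b)\Rightarrow(a)$ via $\tilde M=Mk_N$ correctly fills in a reduction the paper leaves implicit in its appeal to Proposition \ref{fine:ppl:fine}.
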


\begin{proof}
$(b)\Rightarrow (a)$ follows from Proposition \ref{fine:ppl:fine}. 
We show $(a)\Rightarrow (b)$.
Assume that $K$ is Kummer-faithful.
Let $k$ be a $p$-adic subfield of $K$ so that $K$ is a Galois extension of $k$.
Let $N$ be the maximal tamely ramified extension of $k$ contained in $K$,
which is a Galois extension of $k$ since so is $K$. 
By Lemma \ref{subquasi-finite} (3), replacing $k$ by a finite subextension in $K$,
we may suppose that $N/k$ is prime-to-$p$.
Then, it follows from Lemma 5 of \cite{Iwa55} that the exact sequence
$1\to \mrm{Gal}(K/N)\to \mrm{Gal}(K/k)\to \mrm{Gal}(N/k)\to 1$
splits, that is, 
there exists an algebraic extension $M$ of $k$ contained in $K$
such that $K=MN$ and $M\cap N=k$. 
Since $K/N$ is a pro-$p$ extension, 
we see that the supernatural number $[M:k]$ is a power of $p$. 
This finishes a proof of $(a)\Rightarrow (b)$.
Finally we note that, under the assumption that $K$ is abelian over a $p$-adic field $k$,
the tame ramification index of $K/k$ is finite.
\end{proof}

\if0
\begin{remark} 
We can not remove the assumption "Galois" from the statement 
of Proposition \ref{KF:structure1}.
Let $\ell$ be a prime not equal to $p$.
Fix a compatible system of $\ell$-power roots of $p$ 
and denote by $K$ the extension field of $\mbb{Q}_p$
obtained by adjoining these roots of $p$.
Tsujimura showed in Remark 1.4.5 of \cite{Tsu23} that $K$ is Kummer-faithful.
However, by construction, $K$ does not admit a decomposition as in
(b) of Theorem \ref{KF:structure1}.   
\end{remark}
\fi

\if0
Since quasi-finite extensions can be considered as "not particularly difficult," 
\fi
In view of the above proposition, 
the essential difficulty in constructing a Kummer-faithful field over a $p$-adic field 
lies in the problem of whether an (almost) 
pro-$p$ algebraic extension which is Kummer-faithful can be constructed.
Later we will study some criterion of Kummer-faithfulness for Lubin-Tate extensions of $p$-adic fields.

\subsection{Finiteness of torsion points}
The purpose of this section is to give some 
equivalent conditions of Kummer-faithfulness
in terms of finiteness of torsion points.

\begin{theorem}
\label{KF:char1}
Let $\mbb{Q}_p\subset k\subset K$ be algebraic extensions.
\begin{itemize}
\item[{\rm (1)}] Assume that $k$ is Kummer-faithful and $K$ is a Galois extension of $k$.
Then, the following are equivalent.
\begin{itemize}
\item[{\rm (a)}] $K$ is Kummer-faithful.
\item[{\rm (b)}] $K$ is stably $\mu_{\ell^{\infty}}$-finite for every prime $\ell$, and 
the group $A(L)[p^{\infty}]$ is finite
for any finite extension $L/K$ 
and any abelian variety $A/L$ with good reduction.
\end{itemize}
\item[{\rm (2)}] 
Assume that $k$ is both AV-tor-finite and Kummer-faithful,  
and also assume that $K$ is a Galois extension of $k$.
Then the following are equivalent.
\begin{itemize}
\item[{\rm (a)}] $K$ is Kummer-faithful with finite residue field.
\item[{\rm (b)}] $K$ is semi-AV-tor-finite.
\item[{\rm (c)}] $K$ is AV-tor-finite.
\end{itemize}
\end{itemize}
\end{theorem}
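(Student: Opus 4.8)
The plan for Part (1) is as follows. The implication $(a)\Rightarrow(b)$ is immediate: a Kummer-faithful field is torally Kummer-faithful and AVKF by Proposition \ref{KFreduction}, so $L^{\times}_{\mrm{div}}=0$ and $A(L)_{\mrm{div}}=0$ for every finite $L/K$ and every abelian variety $A/L$, and Proposition \ref{div} converts these into the two finiteness statements in (b). For $(b)\Rightarrow(a)$, by Corollary \ref{fine=KF}~(2) (using that $k$ is Kummer-faithful and $K/k$ is Galois) together with Proposition \ref{KF=AVKF}~(1), it suffices to prove that $K$ is $\ell^{\infty}$-AV-tor-finite for every prime $\ell$. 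I would first observe that (b) forces $\mbb{F}_K$ to be quasi-finite: for $\ell\neq p$ this is Lemma \ref{residue}~(1), and for $\ell=p$ the proof of Lemma \ref{residue}~(2) only uses finiteness of $E(K)[p^{\infty}]$ for one good ordinary CM elliptic curve, which (b) grants; since $G_{\mbb{F}_K}$ is procyclic, $G^{\ell}_{\mbb{F}_K}\simeq\mbb{Z}_{\ell}$ for all $\ell$ gives $G_{\mbb{F}_K}\simeq\hat{\mbb{Z}}$.

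Given $\ell$, a finite $L/K$ and an abelian variety $A/L$, I would enlarge $L$ (which only enlarges torsion and preserves (b)) so that $A$ has semistable reduction and $L$ is Galois over $k$. Non-archimedean rigid uniformization (Tate in the totally degenerate case, Raynaud in general) presents $A$ as $\mcal{E}/M$, where $\mcal{E}$ is an extension $0\to T\to\mcal{E}\to B\to 0$ of an abelian variety $B$ with good reduction by a torus $T$ split over a finite unramified extension, and $M\simeq\mbb{Z}^{t}$ is a discrete $G_L$-stable lattice; on $\ell$-adic Tate modules this yields exact sequences of $G_L$-modules $0\to V_{\ell}T\to V_{\ell}\mcal{E}\to V_{\ell}B\to 0$ and $0\to V_{\ell}\mcal{E}\to V_{\ell}A\to M\otimes\mbb{Q}_{\ell}\to 0$. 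As $A(L)[\ell^{\infty}]$ is finite if and only if $(V_{\ell}A)^{G_L}=0$, I would establish this vanishing piece by piece. One has $(V_{\ell}T)^{G_L}=0$ since $T(L)[\ell^{\infty}]$ embeds into the $\mu_{\ell^{\infty}}$ of a finite unramified extension, which is finite by stable $\mu_{\ell^{\infty}}$-finiteness; $(V_{\ell}B)^{G_L}=0$ since for $\ell=p$ this is the good-reduction hypothesis in (b), while for $\ell\neq p$ the module $V_{\ell}B$ is unramified with Frobenius eigenvalues Weil numbers of absolute value $\sqrt{\#\mbb{F}_{L}}\neq 1$ and $\mbb{F}_L$ is quasi-finite. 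Hence $(V_{\ell}\mcal{E})^{G_L}=0$, so $(V_{\ell}A)^{G_L}$ injects into $(M\otimes\mbb{Q}_{\ell})^{G_L}$ through a map landing in the kernel of the connecting homomorphism $\delta\colon(M\otimes\mbb{Q}_{\ell})^{G_L}\to H^{1}(G_L,V_{\ell}\mcal{E})$, which is cup product with the extension class. This class is the image of the periods $M\hookrightarrow\mcal{E}(L)$ under the $\ell$-adic Kummer map of $\mcal{E}$, so a nonzero element of $\ker\delta$ would produce a nonzero period that is infinitely $\ell$-divisible in $\mcal{E}(L)$; passing to the toric part over a finite unramified extension, this is an element of positive valuation possessing $\ell^{n}$-th roots there for all $n$, and the Galois-ness of the relevant base field then forces a full $\mu_{\ell^{n}}$ for each $n$, contradicting stable $\mu_{\ell^{\infty}}$-finiteness. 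Thus $(V_{\ell}A)^{G_L}=0$. I expect the identification of $\ker\delta$ with non-divisibility of the periods, and the bookkeeping needed to extract the roots of unity from Galois-ness, to be the main technical obstacle.

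For Part (2) the implication $(b)\Rightarrow(c)$ is trivial, since abelian varieties are semi-abelian. For $(c)\Rightarrow(a)$: if $K$ is AV-tor-finite then $\mbb{F}_K$ is finite by Proposition \ref{residueKF}~(2), and $K$ is $\ell^{\infty}$-AV-tor-finite, hence $\ell^{\infty}$-semi-AV-tor-finite by Proposition \ref{KF=AVKF}~(1), for every $\ell$; so $K$ is locally semi-AV-tor-finite and therefore Kummer-faithful by Corollary \ref{fine=KF}~(2).

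For $(a)\Rightarrow(b)$, I would first use finiteness of $\mbb{F}_K$ and Galois-ness of $K/k$ to upgrade the quasi-finiteness of tame ramification coming from Lemma \ref{subquasi-finite}~(3): if $e_{\mrm{tame}}(K/k)$ were divisible by infinitely many primes $\ell$, the argument in the proof of Lemma \ref{subquasi-finite}~(2) applied to the (Galois over $k$, finite residue) maximal tame subextension would put $\mu_{\ell}\subset K$ for each such $\ell$, and since $\mbb{F}_p(\mu_{\ell})=\mbb{F}_{p^{\mrm{ord}_{\ell}(p)}}$ with $\mrm{ord}_{\ell}(p)$ unbounded along any infinite set of primes, this contradicts $\mbb{F}_K$ being finite. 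Hence, after replacing $k$ by a finite extension inside $K$, the extension $K/k$ is pro-$p$. Now let $L/K$ be finite and $A/L$ semi-abelian, defined over a $p$-adic subfield $L_{0}$. On one hand $A(L)[p^{\infty}]$ is finite because $K$, being Kummer-faithful, is locally semi-AV-tor-finite (Corollary \ref{fine=KF}~(1)). On the other hand the prime-to-$p$ torsion of $A(L)$ is finite: passing to a finite extension over which $A$ is semistable and using that $L$ is an almost pro-$p$ (in particular prime-to-$\ell$-compatible) ramified extension of $L_{0}$ with fixed finite residue field, reduction on N\'eron models embeds this torsion into the bounded group assembled from the (unchanged) semi-abelian part of the special fibre and the prime-to-$p$ part of the component group, the latter being insensitive to pro-$p$ ramified base change. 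Therefore $A(L)_{\mrm{tor}}$ is finite and $K$ is semi-AV-tor-finite. The delicate point here is the control of the component groups and of prime-to-$p$ torsion along the possibly infinite, wildly ramified extension $L/L_{0}$, for which finiteness of the residue field and the pro-$p$ reduction step above are essential.
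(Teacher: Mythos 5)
Your architecture matches the paper's for Part (1) and for $(b)\Rightarrow(c)\Rightarrow(a)$ of Part (2), but your proof of $(a)\Rightarrow(b)$ in Part (2) is genuinely different. The paper also uses rigid uniformization there, but differently: it first proves finiteness of $S(L)_{\mrm{tor}}$ for the Raynaud extension $S$ (via the torus and potentially-good cases), then invokes the hypothesis that $k$ is AV-tor-finite to produce a uniform bound $M$ with $A(k_0)_{\mrm{tor}}\subset A[M]$, and descends to $A(L)_{\mrm{tor}}\subset A[M^2]$ by the twisting trick $\sigma P-P\in S(L)[n]$ for $\sigma\in G_{k_0}$. You instead reduce, via Lemma \ref{subquasi-finite}~(3) together with finiteness of $\mbb{F}_K$, to the case where $L$ is a totally ramified pro-$p$ extension of a $p$-adic field $L_0$, and control prime-to-$p$ torsion through the special fibre of the N\'eron model and the invariance of the prime-to-$p$ part of the component group under totally ramified pro-$p$ base change (Grothendieck, Bosch--Xarles). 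Both routes work; yours has the notable feature of not using AV-tor-finiteness of $k$ in this implication at all (that hypothesis remains needed for $(c)\Rightarrow(a)$ via Propositions \ref{div} and \ref{residueKF}), at the cost of importing the semistable N\'eron model and component-group machinery. Your reduction to the pro-$p$ case is exactly where Galois-ness enters, consistently with the counterexample in Remark \ref{galois}~(2).

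For Part (1), the paper outsources the key equivalence of Lemma \ref{ellfine} to \cite[Cor.~2.4~(1)]{Oze23}, and your sketch reconstructs that argument; however, the step you yourself flag as the main technical obstacle hides a genuine issue. A nonzero element of $\ker\delta$ yields a period $m$ with $m=\ell^{n}y_{n}$, $y_{n}\in\mcal{E}(L)$, but there is no projection $\mcal{E}\to T$, so one cannot directly ``pass to the toric part.'' One must first show that the image $\bar{m}\in B(L)$ vanishes, and for this the finiteness of $B(L)[\ell^{\infty}]$ alone is not sufficient: one needs $B(L)_{\ell\text{-div}}=0$, which requires the $\ell$-primary version of Proposition \ref{div} and hence the hypothesis that $L$ is Galois over a Kummer-faithful field. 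Once $\bar{m}$ is killed (up to bounded torsion), a multiple of $m$ lies in $T(L)$, its coordinates lie in a $p$-adic field and one of them has nonzero valuation because the valuation map is injective on the period lattice; only then does your Galois/roots-of-unity argument apply --- and note that what forces $\mu_{\ell^{n}}\subset L$ is that the element lies in the $p$-adic base field over which $L$ is Galois, not any statement about $\ell$-divisibility of the value group of $L$, which can perfectly well be infinitely $p$-divisible. So the skeleton of your Part (1) is correct, but the treatment of the abelian part $B$ in the period-divisibility step needs to be filled in before the argument closes.
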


We note that the implications $(a)\Rightarrow (b)$ in Theorem \ref{KF:char1} (1)
and $(b)\Rightarrow (c)$ in Theorem \ref{KF:char1} (2)
clearly hold for {\it every} algebraic extension field $K$ of $\mbb{Q}_p$.
If we remove the assumption ``Galois'',
then $(a)\Rightarrow (b),\ (c)$ in Theorem \ref{KF:char1} (2)
does not hold (although the author does not know about
the converse implication), see Remark \ref{galois} (2).

First we show Theorem \ref{KF:char1} (1); 
this is an immediate consequence of Proposition \ref{fine=KF}, 
Proposition \ref{residue} and Lemma \ref{ellfine} below.

\begin{lemma}
\label{ellfine}
Let $\mbb{Q}_p\subset k\subset K$ be algebraic extensions.
Let $\ell$ be a prime. 
Assume that $k$ is $\ell^{\infty}$-semi-AV-tor-finite and $K$ is a Galois extension of $k$.
Consider the following conditions.
\begin{itemize}
\item[{\rm (a)}] $K$ is $\ell^{\infty}$-semi-AV-tor-finite.
\item[{\rm (b)}] $K$ is stably $\mu_{\ell^{\infty}}$-finite, and 
the group $A(L)[\ell^{\infty}]$ is finite
for any  finite extension $L/K$ 
and any abelian variety $A_{/L}$ with good reduction.
\item[{\rm (c)}] $K$ is stably $\mu_{\ell^{\infty}}$-finite.
\end{itemize}
Then, the following hold.
\begin{itemize}
\item[{\rm (1)}] 
We have $(a) \Leftrightarrow (b)\Rightarrow (c)$.
\item[{\rm (2)}] 
If $\ell$ is not equal to $p$, then we have $(a) \Leftrightarrow (b)\Leftrightarrow (c)$.
\end{itemize}
\end{lemma}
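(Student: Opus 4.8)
The plan is as follows. The implications $(a)\Rightarrow(b)$ (take $A$ to be a torus, resp.\ an abelian variety with good reduction) and $(b)\Rightarrow(c)$ are formal and need no hypothesis, so the content is $(b)\Rightarrow(a)$ together with, in part~(2), $(c)\Rightarrow(b)$. For $(b)\Rightarrow(a)$ I would first reduce to tori and abelian varieties: for a semi-abelian variety $A$ over a finite extension $L/K$ there is an exact sequence $0\to T\to A\to B\to 0$ with $T$ a torus and $B$ an abelian variety, giving $0\to T(L)[\ell^\infty]\to A(L)[\ell^\infty]\to B(L)[\ell^\infty]$; and $T$ splits over a finite $L'/L$, so $T(L)[\ell^\infty]\hookrightarrow\mu_{\ell^\infty}(L')^{\dim T}$, finite by $(b)$ (or $(c)$). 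Thus everything reduces to $B(L)[\ell^\infty]$ for an abelian variety $B/L$, and after enlarging $L$ (which only enlarges the torsion, hence is harmless) we may assume $B$ has split semistable reduction.

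For $\ell\neq p$ I would argue through the N\'eron model $\mcal B/\cO_L$, avoiding uniformization. Here $B(L)=\mcal B(\cO_L)$, and the kernel of the reduction $\mcal B(\cO_L)\to\mcal B_s(\mbb F_L)$ is the group of $\cO_L$-points of a formal group, hence $\ell$-torsion free for $\ell\neq p$; so $B(L)[\ell^\infty]\hookrightarrow\mcal B_s(\mbb F_L)[\ell^\infty]$. The component group is finite and $\mcal B_s^0$ is an extension of an abelian variety $\tilde B_0$ by a torus $\tilde T_0$ over $\mbb F_L$, so it suffices to bound $\tilde T_0(\mbb F_L)[\ell^\infty]$ and $\tilde B_0(\mbb F_L)[\ell^\infty]$. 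Teichm\"uller lifting gives $\mu_{\ell^\infty}(\mbb F_{L'})\cong\mu_{\ell^\infty}(L')$ for $\ell\neq p$, so $(b)$ — equivalently $(c)$ — gives $G^\ell_{\mbb F_K}\simeq\mbb Z_\ell$ by Lemma \ref{residue}~(1), hence $\mbb F_L$ is stably $\mu_{\ell^\infty}$-finite and $G^\ell_{\mbb F_L}\simeq\mbb Z_\ell$; this handles $\tilde T_0$, and for $\tilde B_0$ one uses that the eigenvalues of Frobenius on $V_\ell\tilde B_0$ are Weil numbers, in particular never roots of unity, whence $\tilde B_0(\mbb F_L)[\ell^\infty]=(V_\ell\tilde B_0/T_\ell\tilde B_0)^{G_{\mbb F_L}}$ is finite. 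Running the good-reduction case of this argument — where the formal group is uniquely $\ell$-divisible, so that $B(L)[\ell^\infty]\cong\tilde B(\mbb F_L)[\ell^\infty]$ — gives $(c)\Rightarrow(b)$. This settles part~(2) and the case $\ell\neq p$ of part~(1).

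For $\ell=p$ and an abelian variety $B$ the $p$-adic rigid-analytic (Mumford--Raynaud) uniformization is needed: with split semistable reduction over $L$ there is an exact sequence $0\to T_0\to G\to B_0\to 0$ of semi-abelian varieties over $L$, with $T_0$ a split torus and $B_0$ of good reduction, together with a lattice $Y\hookrightarrow G(L)$ on which $G_L$ acts trivially, such that $B^{\mrm{an}}=G^{\mrm{an}}/Y$. Since $Y$ is torsion free and $G(\overline L)$ is divisible, passing to $p$-power torsion of $\overline L$-points and taking $G_L$-invariants yields $0\to G(L)[p^\infty]\to B(L)[p^\infty]\to Y\otimes\mbb Q_p/\mbb Z_p$, and $G(L)[p^\infty]$ is finite because $0\to T_0(L)[p^\infty]\to G(L)[p^\infty]\to B_0(L)[p^\infty]$ has outer terms $\mu_{p^\infty}(L)^{\dim T_0}$ and $B_0(L)[p^\infty]$, both finite by $(b)$ (the latter by its good-reduction clause). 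It therefore remains to show that the image of $B(L)[p^\infty]$ in $Y\otimes\mbb Q_p/\mbb Z_p$ is finite, i.e.\ that no nonzero $y\in Y$ is infinitely $p$-divisible in $G(L)$; this is the main obstacle.

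To settle it I would use the standing hypotheses as follows. The heart of the matter is the purely toric case: if $z\in L^\times$ has nonzero valuation (for the valuation on $\overline{\mbb Q}_p$) and is infinitely $p$-divisible in $L^\times$, then — replacing $L$ by the Galois closure of $L/k$, which is again finite over $K$ because $K/k$ is Galois — we may assume $L/k(z)$ is Galois, and since $\mu_{p^\infty}(L)$ is finite (a consequence of $(b)$), a Kummer-theory computation shows that, choosing $w_n\in L^\times$ with $w_n^{p^n}=z$, the fields $E(w_n)$ over $E:=k(z)(\mu_{p^\infty}(L))$ form an increasing chain of degree dividing $\#\mu_{p^\infty}(L)$, hence stabilise to a single finite extension $F$ of $k$; then $z$ is infinitely $p$-divisible in $F^\times$, so the Tate curve $\mbb G_m/z^{\mbb Z}$ — defined over the finite extension $k(z)$ of $k$ — has infinite $p$-power torsion over $F$, contradicting that $k$ is $p^\infty$-semi-AV-tor-finite. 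The general period element $y\in Y$ is defined over a finite extension $k_B$ of $k$ contained in $L$ (since $B,G,T_0,Y$ all descend to such a field), so the same device — enlarge $L$ to the Galois closure of $L/k$, absorb the finite group $G(L)[p^\infty]$ into $k_B$, and note that the division fields $k_B(g_n)$ of $y$ form an increasing chain of degree bounded by $\#G(L)[p^\infty]$ — shows $y$ is already infinitely $p$-divisible in $G(F)$ for some finite $F/k$; combined with finiteness of $B_0(L)[p^\infty]$ (to reduce modulo torsion to $Y\cap T_0$, on which the tropicalisation is injective) and the toric case, this should produce the desired contradiction. Controlling the interaction between the period lattice and the good-reduction part $B_0$ over the infinite extension $L/k_B$ is where I expect the real work to lie; once it is handled, $(b)\Rightarrow(a)$ holds for $\ell=p$, completing part~(1).
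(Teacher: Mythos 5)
Your reductions are sound as far as they go: $(a)\Rightarrow(b)\Rightarrow(c)$ are indeed formal, the d\'evissage of a semi-abelian variety into a torus and an abelian variety is fine, the N\'eron-model argument for $\ell\neq p$ works, and your treatment of $(c)\Rightarrow(b)$ for $\ell\neq p$ is essentially the paper's (though the clean way to finish there is not via Weil numbers --- which by themselves do not exclude infinite $\ell$-torsion over an infinite residue field --- but via the observation you already made, that $G^{\ell}_{\mbb{F}_L}\simeq\mbb{Z}_{\ell}$ makes $\mbb{F}_L$ a potentially prime-to-$\ell$ extension of a finite field, so the $\ell$-power torsion is already rational over a finite field). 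The genuine gap is exactly where you flag it: in $(b)\Rightarrow(a)$ for an abelian variety without good reduction, you must show that the image of $B(L)[\ell^{\infty}]$ in $Y\otimes\mbb{Q}_{\ell}/\mbb{Z}_{\ell}$ is finite, and your Kummer-chain strategy for this is left incomplete. Worse, the proposed reduction ``modulo torsion to $Y\cap T_0$'' is not available in general: the lattice $Y$ need not meet $T_0(\overline{K})$ nontrivially, and the hypothesis $(b)$ gives finiteness of $B_0(L)[p^{\infty}]$ but says nothing that directly forbids the image of a lattice vector from being infinitely $p$-divisible in $B_0(L)$. So the ``real work'' you defer is not a technicality; it is the heart of the lemma, and the route you sketch does not obviously close.

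The paper (following \cite[Cor.\ 2.4(1)]{Oze23}, and reproduced in Step 3 of the proof of Theorem \ref{KF:char1}(2)) disposes of the lattice contribution by a short descent argument that sidesteps divisibility questions entirely. After enlarging $L$ to be Galois over a finite extension $k_0$ of $k$ (legitimate, as you note, because $K/k$ is Galois) and making $N\simeq\mbb{Z}^{m}$ constant, one uses that $S(L)[\ell^{\infty}]$ is finite, say killed by $\ell^{c_1}$, and that $A(k_0)[\ell^{\infty}]$ is finite, say killed by $\ell^{c_2}$, the latter precisely by the standing hypothesis that $k$ is $\ell^{\infty}$-semi-AV-tor-finite. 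For $P\in A(L)[\ell^{n}]$ the image of $P$ in $N/\ell^{n}N$ is $G_{k_0}$-invariant, so $\sigma P-P\in S(L)[\ell^{n}]\subseteq S(\overline{K})[\ell^{c_1}]$ for every $\sigma\in G_{k_0}$; hence $\ell^{c_1}P\in A(k_0)[\ell^{\infty}]\subseteq A(\overline{K})[\ell^{c_2}]$ and $P$ is killed by $\ell^{c_1+c_2}$ uniformly in $n$. This is the one idea your proposal is missing, and it is where both the Galois hypothesis on $K/k$ and the finiteness hypothesis on $k$ are actually consumed. I would encourage you to replace the divisibility analysis of period vectors by this bounded-exponent descent.
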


\begin{proof}
The equivalence $(a)\Leftrightarrow (b)$ follows from essentially the same proof 
as that of Corollary 2.4 (1) of \cite{Oze23}. 
(We should give two remarks about this.
At first, in {\it loc cit.}, $k$ is assumed to be $p$-adic fields. 
However, the same argument proceeds by using $\ell^{\infty}$-semi-AV-tor-finiteness  
instead of Matuck's theorem.
Secondly,  we remark that the key for the proof is 
a non-archimedian rigid uniformization theorem.
We will use a similar method in the proof of Theorem \ref{KF:char1} (2) below.)
It suffices to show $(c)\Rightarrow (b)$ if $\ell\not=p$.
Let  $\ell\not=p$ and assume that  $K$ is stably $\mu_{\ell^{\infty}}$-finite. 
Let $L$ be a finite extension of $K$ and $A$  an abelian variety over $L$
with good reduction. Let us denote  by $\bar{A}$ the reduction of $A$. 
Since $\ell$ is prime to $p$, 
the reduction map induces a bijection between $A(L)[\ell^{\infty}]$
and $\bar{A}(\mbb{F}_{L})[\ell^{\infty}]$.
On the other hand, it follows from (c) that $\mbb{F}_{L}$ 
is a potential prime-to-$\ell$ extension over a finite field.
Thus $\bar{A}(\mbb{F}_{L})[\ell^{\infty}]$ is defined over  a finite field.
In particular, the group $\bar{A}(\mbb{F}_{L})[\ell^{\infty}]$, 
and thus also $A(L)[\ell^{\infty}]$, must be finite.
\end{proof}

Next we give a proof of Theorem \ref{KF:char1} (2).

\begin{proof}[Proof of Theorem \ref{KF:char1} (2)]
The implication $(b)\Rightarrow (c)$ is clear,
and $(c)\Rightarrow (a)$ is a result of Propositions \ref{div} and \ref{residueKF}.
We show  $(a)\Rightarrow (b)$. Assume the condition (a). 
Let  $L/K$ be a finite extension and $X/L$ a semi-abelian variety.
The goal is to show that the torsion subgroup of $X(L)$ is finite.
We will prove this in four steps depending on the situation of $X$.

{\bf Step 1.}  Consider the case where $X=T$ is a torus. 
Replacing $L$ by a finite extension,
we may assume that the torus $T$ splits over $L$.
Since $L$ is Kummer-faithful, $T(L)[\ell^{\infty}]$ is finite 
for every prime $\ell$. 
Moreover, one can check  that $T(L)[\ell]$  is trivial for 
almost all $\ell$: If not, we have $\mu_{\ell}\subset L$ 
for infinitely many primes $\ell$ but this contradicts the fact that $L$
has a finite residue field. Hence $T(L)_{\mrm{tor}}$ is finite.

{\bf Step 2.}  Consider the case where $X=A$ is an abelian variety with 
potential good reduction. Replacing $L$ by a finite extension,
we may suppose that $A$ has good reduction over $L$.
Since $L$ is AVKF, $A(L)[\ell^{\infty}]$ is finite for every prime $\ell$. 
Furthermore, since the reduction map 
from the prime-to-$p$ part $A(L)_{p'}$ of
$A(L)_{\mrm{tor}}$ is injective and the residue field of $L$ is finite, 
we see that $A(L)_{p'}$ is finite.
Hence $A(L)_{\mrm{tor}}$ is finite.

{\bf Step 3.}  Consider the case where $X=A$ is an abelian variety (without reduction hypothesis).
Let $g$ be the dimension of $A$.
Take a $p$-adic subfield $K_0$ in $L$ so that $A$ is defined over $K_0$. 
Applying a non-archimedian  rigid uniformization theorem to $A_{/K_0}$
(\cite{Ray71}, \cite{Bos91}  and \cite{BoXa96}), 
we find that there exist the following data, which is called a rigid uniformization of $A$ 
(cf.\  \cite[Definition 1.1 and Theorem 1.2]{BoXa96}):
\begin{itemize} 
\item[(i)] $S$ is a semi-abelian variety of dimension $g$ fits into an exact sequence
of $K_0$-group schemes
$0\to T \to S \to B \to 0$ where $T$ is a torus of rank $m$ and $B$
is an abelian variety which has potential good reduction,
\item[(ii)] a closed immersion of rigid $K_0$-groups $N^{\rm an}\hookrightarrow S^{\rm an}$ 
where  $N$ is a group scheme which is isomorphic to $\mbb{Z}^{\oplus m}$
 after a finite base extension. Here, the subscript  ``an'' is the GAGA functor, and  
\item[(iii)] a faithfully flat morphism $S^{\rm an}\to A^{\rm an}$ of rigid $K_0$-groups 
with kernel $N^{\rm an}$.
\end{itemize}
We have exact sequences
$$
0\to N(\overline{K}) \to S(\overline{K})\to A(\overline{K})\to 0 \quad \mrm{and} \quad 
0\to T(\overline{K}) \to S(\overline{K})\to B(\overline{K})\to 0
$$
of $G_{K_0}$-modules. 
For the proof, replacing $L$ and $K_0$ by finite extensions,
we may assume that $L$ is a Galois extension of $k$ and 
$N$ is constant over the base field $K_0$. 
We set $k_0:=kK_0$. Note that $L$ is a Galois extension of $k_0$. 
The exact sequence $0\to T(\overline{K}) \to S(\overline{K})\to B(\overline{K})\to 0$
of $G_{k_0}$-modules induces an exact sequence 
$0\to T(L)_{\mrm{tor}} \to S(L)_{\mrm{tor}}\to B(L)_{\mrm{tor}}$
of $G_{k_0}$-modules.
By  Step 1 and Step 2, it follows that
$T(L)_{\mrm{tor}}$ and $B(L)_{\mrm{tor}}$ are finite,
which shows that  $S(L)_{\mrm{tor}}$ is also finite.
Since $k$ is AV-tor-finite, there exists an integer $M>0$ 
such that both $S(L)_{\mrm{tor}}\subset S(\overline{K})[M]$
and $A(k_0)_{\mrm{tor}}\subset A(\overline{K})[M]$ hold. 
Let $n>0$ be any integer divided by $M^2$.
Since $L$ is a Galois extension of $k_0$,  
there exists an exact sequence 
$0\to S(L)[n]\to A(L)[n]\to N(\overline{K})/nN(\overline{K})$
of $G_{k_0}$-modules.
Let $P$ be an element of $A(L)[n]$. 
Since $G_{k_0}$ acts on $N(\overline{K})$ trivial,
we know that $\sigma P-P\in S(L)[n]\subset S(\overline{K})[M]$ 
for any $\sigma \in G_{k_0}$.
Thus we find $MP\in A(k_0)_{\mrm{tor}}\subset A(\overline{K})[M]$,
which implies $A(L)[n]\subset A(\overline{K})[M^2]$.
Since this relation holds for any $n$ divided by $M^2$,
we obtain that $A(L)_{\mrm{tor}}$ is contained in $A(\overline{K})[M^2]$,
which must be finite.

{\bf Step 4.}  Finally, we consider the case where $X$ is a semi-abelian variety.
We have an exact sequence
\begin{equation}
\label{semiav}
0\to T\to X \to A\to 0
\end{equation}
of group schemes over $K$, where 
 $T$ is a torus and $A$ is an abelian variety.
This induces an exact sequence
$
0\to T(L)_{\mrm{tor}}\to X(L)_{\mrm{tor}}\to A(L)_{\mrm{tor}}
$
of abelian groups.
Since $T(L)_{\mrm{tor}}$ and $A(L)_{\mrm{tor}}$ are 
finite by Step 1 and Step 3, 
we conclude that $X(L)_{\mrm{tor}}$ is finite as desired.
\end{proof}

\begin{remark} 
\label{galois}
(1) 
Let  $\mbb{Q}_p\subset k\subset K$ be as in Theorem \ref{KF:char1} (2).
The equivalent conditions (a), (b) and (c) in the theorem 
are also equivalent to the following condition (d):
\begin{itemize}
\item[{\rm (d)}]  
For every finite extension $L$ of $K$ and every commutative algebraic group 
$G$ over $L$, it holds that 
$G(L)_{\mrm{tor}}$ is finite. 
\end{itemize}
In fact, we can check the equivalence between (d) and the others 
by almost the same method as the proof of the theorem; 
we should use Chevalley's decomposition of 
(possibly non-connected) commutative algebraic groups
(cf.\ \cite[Theorem 2.9]{Bri17})  instead of \eqref{semiav} 
in the argument of Step 4.

(2) We can not remove the assumption "Galois" from the statement of Theorem \ref{KF:char1} (2). 
Let $\mfrak{n}=\prod_{\ell:\mrm{prime}}\ell$ be a supernatural number and 
take a compatible system $(p_k)_{k\mid \mfrak{n}}$ of $(p^{k}-1)$-th roots $p_k$ of $p$.
Let $K$ be the extension field $\mbb{Q}_p(p_k; k\mid \mfrak{n})$
of $\mbb{Q}_p$ obtained by adjoining all $p_k$ for all $k\mid \mfrak{n}$.  
We see that $K$ is totally ramified over $\mbb{Q}_p$ and 
the Galois closure $\hat{K}$ of $K$ over $\mbb{Q}_p$ is quasi-finite over $\mbb{Q}_p$.
The field $\hat{K}$ is  locally semi-AV-tor-finite by Proposition \ref{fine:ppl:fine}.
In particular, $\hat{K}$ is Kummer faithful by Proposition \ref{fine=KF} (2)
and thus so is $K$.
On the other hand, $K$ is not AV-tor-finite 
since there are infinitely many $K$-rational torsion points of  
the Tate curve with uniformizing parameter $p$. 
\end{remark}

\if0
\subsection{Quasi-finite extension of $p$-adic fields}

In this section, we study an explicit description of 
a quasi-finite Galois extension  of a $p$-adic field.
Let $q$ be a power of $p$ and $\mbb{F}_q$ the finite field of order $q$.
For a supernatural number $\mfrak{n}$,
we denote by $\mbb{F}_{q^{\mfrak{n}}}$ the extension field of $\mbb{F}_{q}$ of degree  $\mfrak{n}$,
that is, $\mbb{F}_{q^{\mfrak{n}}}=\bigcup_{n\mid \mfrak{n}, n\in \mbb{N}} \mbb{F}_{q^n}$.
If $k$ is a $p$-adic field with residue field $\mbb{F}_q$,
then we also denote by $k_{\mfrak{n}}$ the (unique) unramified extension 
of $k$ with residue field $\mbb{F}_{q^{\mfrak{n}}}$.
Explicitly, we have 
$k_{\mfrak{n}}=\bigcup_{n\mid \mfrak{n}}k(\mu_{q^n-1})$
where $n$ runs over the set of natural numbers dividing $\mfrak{n}$.
For any $x\in k$, we denote by $k_{\mfrak{n}}(\sqrt[q^{\mfrak{n}}-1]{x})$ 
the extension field of $k_{\mfrak{n}}$ obtained by adjoining 
all $y\in \overline{k}$ with the property that 
 $y^{q^n-1}=x$ for some natural number $n$ dividing $\mfrak{n}$.
We remark that 
we have
$k_{\mfrak{n}}(\sqrt[q^{\mfrak{n}}-1]{x})=\bigcup_n k_{n}(\sqrt[q^n-1]{x})$
where $n$ runs over the set of natural numbers dividing $\mfrak{n}$.
Since $k_{\mfrak{n}}$ contains all $(q^n-1)$-th roots of unity 
for any natural number $n$ dividing $\frak{n}$,
we see that $k_{\mfrak{n}}(\sqrt[q^{\mfrak{n}}-1]{x})/k$ is a Galois extension. 

For any prime  $\ell$, 
let $v_{\ell}$ be the $\ell$-adic valuation normalized by $v_{\ell}(\ell)=1$.

\begin{proposition}
\label{quasi-finite}
Let $K$ be an algebraic extension of a $p$-adic field.
Consider the following conditions.
\begin{itemize}
\item[{\rm (a)}] $K$ is a quasi-finite extension of a $p$-adic field.
\item[{\rm (b)}] $K$ is contained in $k_{\mfrak{n}}(\sqrt[q^{\mfrak{n}}-1]{\pi})$.
Here,  $k$ is a $p$-adic field with residue field $\mbb{F}_q$, 
$\pi$ is a uniformizer of $k$ and   $\mfrak{n}$ is 
a supernatural number such that $v_{\ell}(\mfrak{n})$
is finite for every prime  $\ell$.
\end{itemize}
\noindent
{\rm (1)} We have $(b)\Rightarrow (a)$.

\noindent
{\rm (2)} If $K$ is a Galois extension of a $p$-adic field,
then we have $(a)\Leftrightarrow (b)$.
\end{proposition}

\begin{remark}
The field 
$k_{\mfrak{n}}(\sqrt[q^{\mfrak{n}}-1]{\pi})$ above 
does not depend on the choice of $\pi$
since $k_{\mfrak{n}}$ contains all $(q^n-1)$-th roots of unity.
\end{remark}

\begin{lemma}
\label{quasi-finite:lem}
Let $a>1$ be a natural number and  
$\mfrak{n}$ a supernatural number such that $v_{\ell}(\mfrak{n})$
is finite for every prime  $\ell$.
Let $a^{\frak{n}}-1$ be the supernatural number defined by 
$a^{\mfrak{n}}-1:=\mrm{lcm}\{a^n-1\mid n\in \mbb{N}, n \mid \mfrak{n}\}$.
Then,  $v_{\ell}(a^{\mfrak{n}}-1)$
is finite for every prime  $\ell$.
\end{lemma}
\begin{proof}
Let $\{\ell_i\}_{i\in \mbb{N}}$ be the set of primes 
with  $\ell_i<\ell_{i+1}$, and write $ \mfrak{n}=\prod^{\infty}_{k=1}\ell_k^{n_k}$.
For $i\ge 1$, we set $m_i:=\prod^i_{k=1}\ell_k^{n_k}$ and $x_i:=a^{m_i}-1$. 
We also set $m_0:=1$ and $x_0:=1$.
Note that we  have $0\le n_i<\infty$ and $x_i\mid x_{i+1}$ for any $i$.
Since any natural number $n$ dividing $\mfrak{n}$ is a divisor of $m_i$ for some $i$,
the equalities
$$
a^{\mfrak{n}}-1=\mrm{lcm}\{x_i \}_{i\ge 0}
=\lim_{i\to \infty} x_i
$$
hold (as supernatural numbers).
For $j>i$, putting $\ell_{i,j}=\prod^j_{k=i+1}\ell_k^{n_k}$ and 
$y_{i,j}:=\sum^{\ell_{i.j}-1}_{k=0} x^{m_i k}$, 
we have $x_j=x_iy_{i,j}$.
Since $y_{i,j}\equiv \ell_{i,j}\ \mrm{mod}\ x_i$,
the greatest common divisor of $x_i$ and $y_{i,j}$ is a divisor of $\ell_{i,j}$.
Now we find that $ \lim_{i\to \infty} x_i$ 
is of the form $\prod_{\ell}\ell^{m_{\ell}}$ with $m_{\ell}<\infty$ for every prime $\ell$.
\end{proof}

\begin{proof}[Proof of Theorem \ref{quasi-finite}]
(1) It suffices to show that the extension $k_{\mfrak{n}}(\sqrt[q^{\mfrak{n}}-1]{\pi})/k$
is quasi-finite under the assumption that   $v_{\ell}(\mfrak{n})$
is finite for every prime $\ell$.
For any natural number $n$ dividing $\frak{n}$, taking a $(q^n-1)$-th root $\pi_n$ of $\pi$,
one has $k_{\mfrak{n}}(\sqrt[q^n-1]{\pi})=k_{\mfrak{n}}(\pi_n)$ 
since $k_{\mfrak{n}}$ contains a primitive 
$(q^n-1)$-th root of unity.
Thus we find $[k_{\mfrak{n}}(\sqrt[q^n-1]{\pi}):k_{\mfrak{n}}]=q^n-1$.
It follows from  
Proposition \ref{index:prop3}   and the equality 
$k_{\mfrak{n}}(\sqrt[q^{\mfrak{n}}-1]{\pi})
=\bigcup_{n\in \mbb{N}, n\mid \mfrak{n}} k_{\mfrak{n}}(\sqrt[q^n-1]{\pi})$
that we have 
\begin{equation}
\label{deg}
[k_{\mfrak{n}}(\sqrt[q^{\mfrak{n}}-1]{\pi}):k]=
[k_{\mfrak{n}}:k]\cdot [k_{\mfrak{n}}(\sqrt[q^{\mfrak{n}}-1]{\pi}):k_{\mfrak{n}}]
=\mfrak{n}\cdot (q^{\mfrak{n}}-1)
\end{equation}
(as supernatural numbers).
By Lemma \ref{quasi-finite:lem},  $v_{\ell}(q^{\mfrak{n}}-1)$
is finite for every prime $\ell$.
Thus we obtain the fact that the extension 
$k_{\mfrak{n}}(\sqrt[q^{\mfrak{n}}-1]{\pi})/k$ is quasi-finite.

(2) It suffices to show $(a)\Rightarrow (b)$.
Assume that $K/k$ is a quasi-finite Galois extension for some $p$-adic field $k$.
Replacing $k$ by a finite subextension in $K$,
we may assume that $K/k$ is prime-to-$p$. In particular, $K/k$ is now tame. 
Since $K/k$ is quasi-finite, there exists a  supernatural number 
$\mfrak{n}=\prod_{\ell} \ell^{n_{\ell}}$ with $n_{\ell}<\infty$
for every prime $\ell$ such that 
the maximal unramified subextension of $K/k$
coincides with $k_{\mfrak{n}}$.
Note that $n_p=0$.
We define a supernatural number $\mfrak{m}$ by $\mfrak{m}:=\mfrak{n}(q^{\mfrak{n}}-1)$.
It follows from Lemma \ref{quasi-finite:lem} that 
$v_{\ell}(\mfrak{m})$ is finite for every prime $\ell$.
Let $q$ be the order of the residue field of $k$ and let $\pi$ be  any uniformize  of $k$.
It is enough to show the following
\begin{claim}
$K$ is contained in $k_{\mfrak{m}}(\sqrt[q^{\mfrak{m}}-1]{\pi})$. 
\end{claim}
In the rest of the proof, for any integer $m$, we denote by $\mu_m$ 
the set of $m$-th roots of unity. 
Take any finite extension $N$ of $k_{\mfrak{n}}$ in $K$, of degree $e$, 
which is Galois over $k$.
Since $N/k_{\mfrak{n}}$ is tame, there  exists an element $\omega_e$ such that 
$\omega:=\omega_e^e$  is a uniformizer of  $k_{\mfrak{n}}$
and $N=k_{\mfrak{n}}(\omega_e)$.
Take a natural number $n\mid \mfrak{n}$ so that $\omega$ is contained in $k_n$.
We have a decomposition $\omega=\pi\cdot \zeta \cdot u^e$
for some $\zeta \in \mu_{q^n-1}$ and $u\in 1+ \pi \mcal{O}_{K_n}$
(here we remark that the $e$-th power map gives a bijection on $1+ \pi \mcal{O}_{K_n}$
since $e$ is prime-to-$p$ and $1+ \pi \mcal{O}_{K_n}$ is pro-$p$).
Taking an $e$-th root $\pi_e$ of $\pi$,  
we obtain $\omega_e\in \pi_e\cdot \mu_{e(q^n-1)}\cdot k_n^{\times}$.
On the other hand, since the set of conjugations of  $\omega_e$ 
over $k_{\frak{n}}$ are $\omega_e\cdot \mu_e$
and $N/k_{\frak{n}}$ is a Galois extension, we see that $\mu_e$ is contained in $N$.
Furthermore, we know that $k(\mu_e)/k$ is unramified since $e$ is prime to $p$.
Thus $k(\mu_e)/k$ is an unramified subextension of $K/k$, that is,
we have  $\mu_e\subset k_{\mfrak{n}}$.
Hence $e$ divides $q^{n'}-1$ for some natural number $n'$ dividing $\mfrak{n}$.
Thus, putting $N=\mrm{lcm}(n,n')$, we have 
$\omega_e\in \pi_e\cdot \mu_{(q^N-1)^2}\cdot k_N^{\times}$.
Note that $(q^N-1)^2$ divides $q^{N(q^N-1)}$. 
Therefore, we find that $\omega_e$ is an element of 
the set $\pi_e\cdot \mu_{q^{N(q^N-1)}}\cdot  k_N^{\times}$. 
Since we have 
$$
\pi_e\cdot \mu_{q^{N(q^N-1)}}\cdot  k_N^{\times} 
\subset \pi_e\cdot  k_{N(q^N-1)}^{\times} 
\subset k_{N(q^N-1)}(\sqrt[q^{N(q^N-1)}-1]{\pi})
\subset k_{\mfrak{m}}(\sqrt[q^{\mfrak{m}}-1]{\pi}),
$$
we see that $\omega_e$ is an element of 
$k_{\mfrak{m}}(\sqrt[q^{\mfrak{m}}-1]{\pi})$.
Therefore, we conclude $N\subset k_{\mfrak{m}}(\sqrt[q^{\mfrak{m}}-1]{\pi})$ and the 
claim follows.
\end{proof}
\fi

\section{Kummer-faithfulness of Lubin-Tate extensions}
\label{KFLT}

The aim of this section is to give some criteria on 
Kummer-faithfulness of Lubin-Tate extensions of $p$-adic fields.
Throughout this section, we fix an algebraic closure $\overline{\mbb{Q}}_p$ of $\mbb{Q}_p$.
Unless otherwise specified, all algebraic extension fields of $\mbb{Q}_p$ appearing in this section are subfields of 
our fixed $\overline{\mbb{Q}}_p$. 
Let $k$ be a $p$-adic field and $\pi$ a uniformizer.
Let $F_{\pi}$ be the Lubin-Tate formal group 
over the integer ring of $k$ associated with $\pi$.
Let $k_{\pi}$ be the extension of $k$
obtained by adjoining all $\pi$-power torsion points of $F_{\pi}$,
which is called the Lubin-Tate extension of $k$ associated with $\pi$.
Local class field theory asserts that $k_{\pi}$ is  a maximal totally ramified 
abelian extension of $k$, 
and the composite of $k_{\pi}$ and the maximal unramified 
extension field $k^{\mrm{ur}}$ of $k$ 
coincides with the maximal abelian extension field 
$k^{\mrm{ab}}$ of $k$.
We should remark that the following conditions on 
the Lubin-Tate extension field $k_{\pi}$ are equivalent by Theorem \ref{KF:char1} (2):
\begin{itemize}
\item[(a))] $k_{\pi}$ is Kummer-faithful. 
\item[(b)] $k_{\pi}$ is semi-AV-tor-finite.
\item[(c)] $k_{\pi}$ is AV-tor-finite.
\end{itemize}

We are interested in determining which Lubin–Tate extension fields $k_{\pi}$ 
satisfy the three equivalent conditions mentioned above.
Before the main part of this section,
we note that it is easy to check whether 
$k_{\pi}$ is
torally Kummer-faithfulness or not.
\begin{proposition}
\label{TKF}
$k_{\pi}$ is torally Kummer-faithful if and only if $q^{-1}\mrm{Nr}_{k/\mbb{Q}_p}(\pi)\not\in \mu_{p-1}$.    
\end{proposition}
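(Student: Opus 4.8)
The plan is to translate toral Kummer-faithfulness of $k_{\pi}$ into a question about divisible elements in multiplicative groups of finite extensions of $k_{\pi}$, and then to detect these with the reciprocity map. By the Weil-restriction remark following Definition~\ref{Def:KF}, $k_{\pi}$ is torally Kummer-faithful iff $T(k_{\pi})_{\mrm{div}}=0$ for every torus $T$ over $k_{\pi}$; since any such $T$ embeds (via the unit of adjunction) into $\mrm{Res}_{L/k_{\pi}}\mbb{G}_{m}$ for a finite splitting field $L/k_{\pi}$, while $\mrm{Res}_{L/k_{\pi}}\mbb{G}_{m}$ is itself a torus with $k_{\pi}$-points $L^{\times}$, this is equivalent to: $(L^{\times})_{\mrm{div}}=0$ for every finite extension $L/k_{\pi}$. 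For such an $L$ the value group is of the form $\tfrac{1}{c}\mbb{Z}[1/p]$, which has trivial divisible part, so $(L^{\times})_{\mrm{div}}\subseteq\mathcal{O}_{L}^{\times}$; writing $\mathcal{O}_{L}^{\times}=\mu_{q_{L}-1}\times U_{L}$ with $U_{L}=1+\mfrak{m}_{L}$, and using that $\mu_{q_{L}-1}$ is finite and the $\ell$-th power map on $U_{L}$ is bijective for every prime $\ell\neq p$, one gets $(L^{\times})_{\mrm{div}}=(U_{L})_{\mrm{div}}=\bigcap_{m\geq 1}U_{L}^{p^{m}}$. By a projective-limit argument (each set of $p^{m}$-th roots of a given element of $U_{L}$ is finite, being a torsor under $\mu_{p^{m}}(L)$), this intersection is nontrivial iff some $1\neq u\in U_{L}$ has a compatible system of all its $p$-power roots in $U_{L}$; and such a $u$ is then either a $p$-power root of unity --- which forces $\mu_{p^{\infty}}\subseteq L$ --- or of infinite order.

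The class field theory input is the computation of $[k_{\pi}(\mu_{p^{m}}):k_{\pi}]$. One has $k_{\pi}(\mu_{p^{m}})=k_{\pi}\cdot k(\mu_{p^{m}})\subseteq k^{\mrm{ab}}$, and since $k_{\pi}$ is the fixed field of $\overline{\langle\mrm{rec}_{k}(\pi)\rangle}$ in $k^{\mrm{ab}}$, the extension $k_{\pi}(\mu_{p^{m}})/k_{\pi}$ is determined by the action of $\mrm{rec}_{k}(\pi)$ on $\mu_{p^{m}}$. By functoriality of $\mrm{rec}$ along $\mbb{Q}_{p}\hookrightarrow k$ this action is through $\chi_{p}(\mrm{rec}_{\mbb{Q}_{p}}(\mrm{Nr}_{k/\mbb{Q}_{p}}(\pi)))$; writing $\mrm{Nr}_{k/\mbb{Q}_{p}}(\pi)=q\cdot w$ with $w:=q^{-1}\mrm{Nr}_{k/\mbb{Q}_{p}}(\pi)\in\mbb{Z}_{p}^{\times}$ and noting that $\mrm{rec}_{\mbb{Q}_{p}}(q)=\mrm{rec}_{\mbb{Q}_{p}}(p)^{f}$ acts trivially on $\mu_{p^{\infty}}$ (Lubin--Tate normalization), one finds $[k_{\pi}(\mu_{p^{m}}):k_{\pi}]=\mrm{ord}_{(\mbb{Z}/p^{m})^{\times}}(w)$. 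Hence $\mu_{p^{\infty}}$ lies in a finite extension of $k_{\pi}$ iff these degrees stay bounded, i.e.\ iff $w$ has finite order in $\mbb{Z}_{p}^{\times}$ --- for $p$ odd, iff $w\in\mu_{p-1}$.

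If $w\in\mu_{p-1}$, the last paragraph gives a finite $L/k_{\pi}$ with $\mu_{p^{\infty}}\subseteq L$, so $\mrm{Res}_{L/k_{\pi}}\mbb{G}_{m}$ has $k_{\pi}$-points $L^{\times}\supseteq\mu_{p^{\infty}}$ and $k_{\pi}$ fails to be torally Kummer-faithful. Conversely, assume $w\notin\mu_{p-1}$ and suppose, for contradiction, that some finite $L/k_{\pi}$ carries $0\neq u\in(L^{\times})_{\mrm{div}}$. Then $u$ is a non-torsion principal unit (a torsion one would force $\mu_{p^{\infty}}\subseteq L$, contradicting the last paragraph). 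After replacing $u$ by one of its $p$-power roots and enlarging a $p$-adic subfield $k_{0}$ of $L$ --- so that $k\subseteq k_{0}$, $u\in k_{0}$ and $u\notin(k_{0}^{\times})^{p}$ --- the field $k_{0}(u^{1/p^{\infty}})$ is an \emph{infinite} extension of $k_{0}$ contained in $L$. Its Galois closure over $k_{0}$ has Galois group $\mbb{Z}_{p}(1)\rtimes\Delta$ with $\Delta=\mrm{Gal}(k_{0}(\mu_{p^{\infty}})/k_{0})$ an open subgroup of $\mbb{Z}_{p}^{\times}$ acting naturally; since the commutator subgroup of such a semidirect product has finite index in $\mbb{Z}_{p}(1)$, every subextension of $k_{0}(u^{1/p^{\infty}})/k_{0}$ that is Galois over $k_{0}$ is \emph{finite} over $k_{0}$. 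On the other hand $E:=k_{0}(u^{1/p^{\infty}})\cap k_{\pi}$ is cofinite in $k_{0}(u^{1/p^{\infty}})$ over $k_{0}$ (as $[L:k_{\pi}]<\infty$), so $Ek_{0}$ is infinite over $k_{0}$; and $Ek_{0}/k_{0}$ is abelian, since $E\subseteq k_{\pi}\subseteq k^{\mrm{ab}}$ and $k_{0}\supseteq k$. This contradiction proves the proposition.

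The reductions of the first paragraph, the reciprocity bookkeeping of the second, and the easy direction are routine; the main obstacle is the converse, specifically the semidirect-product computation showing that $k_{0}(u^{1/p^{\infty}})/k_{0}$ has no infinite Galois subextension --- it is this that rules out non-torsion divisible elements once $w$ has infinite order. (The case $p=2$, where $\mu_{p-1}$ is strictly smaller than the torsion subgroup $\{\pm 1\}$ of $\mbb{Z}_{2}^{\times}$, should be examined separately in light of the above analysis.)
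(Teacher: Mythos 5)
Your reduction of toral Kummer-faithfulness to $(L^{\times})_{\mrm{div}}=0$ for finite $L/k_{\pi}$, and the class-field-theoretic computation of $[k_{\pi}(\mu_{p^{m}}):k_{\pi}]$ in terms of $w=q^{-1}\mrm{Nr}_{k/\mbb{Q}_p}(\pi)$, are both sound (and the caveat about $p=2$, where the torsion of $\mbb{Z}_2^{\times}$ is $\{\pm1\}$ rather than $\mu_{p-1}$, is a fair observation). Note, however, that the paper disposes of the whole statement in two lines: Proposition \ref{div} (quoting [OzTg22, Prop.\ 2.4]) converts toral Kummer-faithfulness of the Galois extension $k_{\pi}/k$ into stable $\mu_{p^{\infty}}$-finiteness, and local class field theory finishes. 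Your first and third paragraphs are in effect a hand-made reproof of that reduction for $\mbb{G}_m$, and this is where the gap lies.

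The problematic step is the assertion that, because the commutator subgroup of $\mbb{Z}_p(1)\rtimes\Delta$ has finite index in $\mbb{Z}_p(1)$, every subextension of $k_{0}(u^{1/p^{\infty}})/k_{0}$ Galois over $k_{0}$ is finite. Writing $M=k_{0}(\mu_{p^{\infty}},u^{1/p^{\infty}})$ and $G=\mrm{Gal}(M/k_{0})$, the commutator computation only controls the Kummer direction: it shows that the image of $H=\mrm{Gal}(M/k_{0}(\mu_{p^{\infty}}))$ in $G^{\mrm{ab}}$ is finite, but $G^{\mrm{ab}}$ still surjects onto the infinite group $\Delta$, and $k_{0}(\mu_{p^{\infty}})/k_{0}$ is itself an infinite abelian subextension of $M/k_{0}$. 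What your argument actually yields is that any abelian subextension $F$ of $k_{0}(u^{1/p^{\infty}})/k_{0}$ satisfies $[F:F\cap k_{0}(\mu_{p^{\infty}})]<\infty$; to conclude $[F:k_{0}]<\infty$ you must in addition show that $k_{0}(u^{1/p^{\infty}})\cap k_{0}(\mu_{p^{\infty}})$ is finite over $k_{0}$, and nothing in the stated group theory gives this. (The subgroup $\mrm{Gal}(M/k_{0}(u^{1/p^{\infty}}))$ is a \emph{twisted} complement determined by the Kummer cocycle of $u$; it need not project onto an open subgroup of $\Delta$, and in the extreme case $H=0$, i.e.\ $u^{1/p^{\infty}}\subset k_{0}(\mu_{p^{\infty}})$, the field $k_{0}(u^{1/p^{\infty}})$ is an infinite \emph{abelian} extension of $k_{0}$ and the lemma fails outright.) The missing step is available in your situation, but only by invoking the hypothesis $w\notin\mu_{p-1}$ a second time: since $[k_{\pi}(\mu_{p^{m}}):k_{\pi}]\to\infty$ and $[L:k_{\pi}]<\infty$, the image of $G_{L}$ in $\mrm{Gal}(k_{0}(\mu_{p^{\infty}})/k_{0})\hookrightarrow\mbb{Z}_p^{\times}$ is an infinite closed, hence open, subgroup, so $L\cap k_{0}(\mu_{p^{\infty}})$ --- and a fortiori $Ek_{0}\cap k_{0}(\mu_{p^{\infty}})$, which your own argument shows is cofinite in $Ek_{0}$ --- is finite over $k_{0}$; that is the contradiction. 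As written, your converse direction uses the hypothesis only to exclude torsion divisible elements, which is not enough.
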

\begin{proof}
 One verifies immediately that $k_{\pi}$ is torally Kummer-faithful if and only if 
it is stably $\mu_{p^{\infty}}$-finite by Proposition \ref{div}. 
The latter condition is equivalent to $q^{-1}\mrm{Nr}_{k/\mbb{Q}_p}(\pi)\not\in \mu_{p-1}$
by local class field theory.
\end{proof}

\subsection{Criterion for Lubin-Tate extensions}
In this section, we give a proof of Theorem \ref{LT} in the Introduction.
In fact, we prove more precise criterion for Kummer-faithfulness of Lubin-Tate extension fields.
To simplify statements,
we use the following notation.
Let $k$ be a $p$-adic field and 
$\pi$ a uniformizer of $k$. 
For a tuple $(r_{\sigma})_{\sigma \in \Gamma_k}$
with $r_{\sigma}\in \{0,1\}$,
we often set 
$$
\hat{\pi}
:=\prod_{\sigma \in \Gamma_k} \sigma \pi^{r_{\sigma}}
$$ 
(of course this notation depends on the choices of 
$\pi$ and $(r_{\sigma})_{\sigma \in \Gamma_k}$).
We recall that $W(q)$ is the set of Weil $q$-integers.

\begin{theorem}
\label{refined:LT}
Let $k$ be  a $p$-adic field with residue cardinality $q=p^f$ 
and $\pi$ a uniformizer of $k$.
We denote by $k_{\pi}$ the Lubin-Tate extension field of $k$ associated with $\pi$.
\begin{itemize}
\item[{\rm (1)}]  
If $k$ is a Galois extension of $\mbb{Q}_p$ and $k_{\pi}$ is not Kummer-faithful, 
then either of the following holds.
\begin{itemize}
\item[{\rm (a)}] $q^{-1}\mrm{Nr}_{k/\mbb{Q}_p}(\pi)\in \mu_{p-1}$.
\item[{\rm (b)}] 
For some $(r_{\sigma})_{\sigma \in \Gamma_k}$ with $r_{\sigma}\in \{0,1\}$, 
it holds $\hat{\pi}\in W(q)$.
\end{itemize}

\item[{\rm (2)}] 
If $q^{-1}\mrm{Nr}_{k/\mbb{Q}_p}(\pi)\in \mu_{p-1}$ or   
$\mrm{Nr}_{k/\mbb{Q}_p}(\pi)\in   W(q)$, 
then $k_{\pi}$ is not Kummer-faithful.

\item[{\rm (3)}] Assume that $k$ is a Galois extension of $\mbb{Q}_p$, and also
assume that all the following conditions hold.
\begin{itemize}
\item[{\rm (i)}] For some $(r_{\sigma})_{\sigma\in \Gamma_k}$
with $r_{\sigma}\in \{0,1\}$, it holds 
$\hat{\pi}\in W(q)$.
\item[{\rm (ii)}] $\pi^r\in \mbb{Q}_p$ for some integer $r\ge 1$.
\item[{\rm (iii)}]
Let $\hat{\pi}$ be as in {\rm (i)}.
\begin{enumerate}
\item $f$  divides $\mrm{ord}_v(\hat{\pi})f_v$
for any finite place $v$ of $\mbb{Q}(\hat{\pi})$ above $p$.
Here, $\mrm{ord}_v$ is the valuation associated with $v$ normalized by $\mrm{ord}_v(\mbb{Q}(\hat{\pi})^{\times})=\mbb{Z}$
and $f_v$ is the residue degree of $\mbb{Q}(\hat{\pi})/\mbb{Q}$ at $v$.
\item  Let $f(X)\in \mbb{Q}[X]$ be the minimal  polynomial of $\hat{\pi}$ over $\mbb{Q}$. Then, 
the degree of any irreducible factor  of $f(X)$  in $\mbb{Q}_p[X]$ 
is not of the form $n\cdot [\mbb{Q}_p(\hat{\pi}):\mbb{Q}_p]$ 
for any integer $n\ge 2$.
\end{enumerate}
\end{itemize}
Then, $k_{\pi}$ is not Kummer-faithful. 
\end{itemize}
\end{theorem}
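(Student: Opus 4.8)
The plan is to treat the three assertions separately, using throughout that $k_\pi/k$ is totally ramified, so that its residue field is the finite field $\mbb{F}_q$; as already recorded in this section via Theorem \ref{KF:char1} (2), this makes the properties ``Kummer-faithful'', ``semi-AV-tor-finite'' and ``AV-tor-finite'' coincide for $k_\pi$, and by Proposition \ref{TKF} the toral obstruction is measured precisely by condition (a).

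For (1), I would first dispose of the toral part: if (a) fails then Proposition \ref{TKF} shows $k_\pi$ is torally Kummer-faithful, hence stably $\mu_{\ell^\infty}$-finite for every prime $\ell$. Supposing $k_\pi$ is nevertheless not Kummer-faithful, Theorem \ref{KF:char1} (2) produces a finite extension $L/k_\pi$ and an abelian variety over $L$ with infinite torsion; running the rigid-uniformization reduction of Step 3 in the proof of Theorem \ref{KF:char1} (2), the torus and prime-to-$p$ contributions are finite (the former by stable $\mu_{\ell^\infty}$-finiteness, the latter because the residue field is finite), so the obstruction localizes in the $p$-power torsion of an abelian variety $B$ with potential good reduction, defined over a finite extension of $k$. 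Thus $V_p(B)^{G_{k_\pi}}\neq 0$ after a finite base change. The heart is then a $p$-adic Hodge-theoretic computation, following \cite[Theorem 1.1]{Oze23}: the representation $V_p(B)$ is crystalline with Hodge--Tate weights in $\{0,1\}$, and the presence of a $G_{k_\pi}$-fixed vector forces a rank-one crystalline subquotient trivial on $G_{k_\pi}$. Classifying such characters through the Lubin--Tate character $\chi_\pi$ and its conjugates identifies the associated Weil number of $\bar B$ with $\hat\pi=\prod_{\sigma\in\Gamma_k}\sigma\pi^{r_\sigma}$ for some $r_\sigma\in\{0,1\}$; since Frobenius eigenvalues on the first cohomology of an abelian variety over $\mbb{F}_q$ are Weil $q$-integers, this yields $\hat\pi\in W(q)$, which is (b).

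For (2), the case $q^{-1}\mrm{Nr}_{k/\mbb{Q}_p}(\pi)\in\mu_{p-1}$ is immediate, since Proposition \ref{TKF} then shows $k_\pi$ is not torally Kummer-faithful and a fortiori not Kummer-faithful. For the case $\mrm{Nr}_{k/\mbb{Q}_p}(\pi)\in W(q)$, which is the full-product specialization of (b) (all $r_\sigma=1$, as $\mrm{Nr}_{k/\mbb{Q}_p}(\pi)=\prod_{\sigma\in\Gamma_k}\sigma\pi$), I would construct an explicit witness: by Honda--Tate theory there is an abelian variety $\bar A_0$ over $\mbb{F}_q$ whose $q$-Frobenius is $\mrm{Nr}_{k/\mbb{Q}_p}(\pi)$, and CM-lifting produces an abelian variety $A$ with good reduction $\bar A_0$ over a finite extension of $k$. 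The point is the norm-compatibility $\mrm{Nr}_{k/\mbb{Q}_p}\circ\chi_\pi=\prod_{\sigma\in\Gamma_k}(\sigma\circ\chi_\pi)$, each factor of which has the same kernel $G_{k_\pi}=\ker\chi_\pi$; hence the relevant rank-one constituent of $V_p(A)$ is trivial on $G_{k_\pi}$, making $A(k_\pi)[p^\infty]$ infinite, so $k_\pi$ is not AV-tor-finite.

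Part (3) carries out the analogous construction for a general partial product $\hat\pi$. Honda--Tate again furnishes $\bar A_0/\mbb{F}_q$ with Frobenius $\hat\pi$, and hypotheses (ii),(iii) are exactly what is needed to realize $\bar A_0$ as the good reduction of an abelian variety over a finite extension of $k$ for which a rank-one $p$-adic constituent becomes trivial on $G_{k_\pi}$: condition (iii)(1) forces the Newton slopes to be compatible with the residue degree $f$ so that the local invariants at $p$ of the resulting endomorphism algebra vanish and the CM structure descends, (iii)(2) rules out an irreducible $p$-adic factor of the minimal polynomial of ``wrong'' degree that would obstruct the fixed line, and (ii) guarantees that $\hat\pi$ descends to a sufficiently small field. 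Granting these, one once more obtains that $A(k_\pi)[p^\infty]$ is infinite. The principal obstacle throughout is the Lubin--Tate/$p$-adic Hodge dictionary underlying both directions: in (1) one must show a $G_{k_\pi}$-fixed vector can only come from a partial-product Weil number (the delicate classification of crystalline characters trivial on $G_{k_\pi}$, taken from \cite{Oze23}), while in (2) and (3) one must conversely manufacture, from a prescribed Weil $q$-integer, an abelian variety whose Tate module genuinely acquires a $G_{k_\pi}$-fixed line---the step where Honda--Tate realizability and the arithmetic conditions (iii) do the essential work.
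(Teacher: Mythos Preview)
Your outline follows the same architecture as the paper's proof, but two of the load-bearing technical steps are missing or misidentified.

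For (2), the real difficulty is not the norm-compatibility formula $\mrm{Nr}_{k/\mbb{Q}_p}\circ\chi_\pi=\prod_\sigma(\sigma\circ\chi_\pi)$ but the preceding step: why is $\mrm{Nr}_{k/\mbb{Q}_p}\circ\chi_\pi$ a constituent of $V_p(B)$ at all? Knowing that $\mrm{Nr}_{k/\mbb{Q}_p}(\pi)$ is a Frobenius eigenvalue of $\bar B$ is a statement about $D^K_{\mrm{cris}}$, not directly about the Galois action. The paper bridges this via Conrad's invariant $\Phi_K$ (Proposition \ref{Conrad}): one decomposes $V_p(B)=\oplus_i L_i(\psi_i)$ along the CM field, matches $\Phi_K(\psi_i)=\mrm{Nr}_{k/\mbb{Q}_p}(\pi)^{f_{K/k}}=\Phi_K(\mrm{Nr}_{k/\mbb{Q}_p}\circ\chi_\pi)$ for some $i$, and then---crucially---applies Lemma \ref{Phi} (5), which says that a crystalline character with Hodge--Tate weights of a single sign and $\Phi_K=1$ is trivial, to the quotient $\psi_i^{-1}\cdot(\mrm{Nr}_{k/\mbb{Q}_p}\circ\chi_\pi)$. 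Without this injectivity statement your sketch has no mechanism to pass from ``same Frobenius eigenvalue'' to ``same character''.

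For (3), two points are off. First, the ordinary Honda--Tate lifting theorem only produces $B$ over a finite extension $K$ with possibly enlarged residue field, so the Frobenius eigenvalue one recovers is $\hat\pi^{f_{K/k}}$ rather than $\hat\pi$; the paper instead invokes the Chai--Conrad--Oort refinement \cite[Theorem 4.1.1]{CCO14} to lift over a \emph{totally ramified} extension of $\mbb{Q}_q$, and condition (iii)(1) is exactly what forces the Brauer class of $\mrm{End}^0(\bar A)$ to be trivial (hence $d=1$, $L=\mbb{Q}(\hat\pi)$), which is the hypothesis needed to apply CCO. Second, condition (ii) has nothing to do with ``$\hat\pi$ descending to a small field''---$\hat\pi$ already lies in $k$. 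Its role is to feed Lemma \ref{Phi} (6): once $\Phi_{Kk}$ of the relevant character quotient vanishes, the hypothesis $\pi^r\in\mbb{Q}_p$ lets one conclude that (a power of) $\psi_i$ is literally a product $\prod_\sigma\sigma\circ\chi_\pi^{m_\sigma}$ on an open subgroup, hence trivial on $G_{k_\pi}$ after a further finite extension. You also omit the edge case $\hat\pi^2=q$, which the paper disposes of separately by showing it forces (a).
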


\if0

\begin{theorem}
\label{refined:LT}
Let $k$ be  a $p$-adic field with residue field $\mbb{F}_q$ 
and $\pi$ a uniformizer of $k$.
We denote by $k_{\pi}$ the Lubin-Tate extension field of $k$ associated with $\pi$.
\begin{itemize}
\item[{\rm (1)}]  
If $k$ is a Galois extension of $\mbb{Q}_p$ and $k_{\pi}$ is not Kummer-faithful, 
then either of the following holds.
\begin{itemize}
\item[{\rm (a)}] $q^{-1}\mrm{Nr}_{k/\mbb{Q}_p}(\pi)\in \mu_{p-1}$.
\item[{\rm (b)}] 
For some $(r_{\sigma})_{\sigma \in \Gamma_k}$, 
it holds $\hat{\pi}\in W(q)$.
\end{itemize}

\item[{\rm (2)}] 
If $q^{-1}\mrm{Nr}_{k/\mbb{Q}_p}(\pi)\in \mu_{p-1}$ or   
$\mrm{Nr}_{k/\mbb{Q}_p}(\pi)\in   W(q)$, 
then $k_{\pi}$ is not Kummer-faithful.

\item[{\rm (3)}] Assume that $k$ is a Galois extension of $\mbb{Q}_p$, and also
assume that all the following conditions hold.
\begin{itemize}
\item[{\rm (i)}] For some $(r_{\sigma})_{\sigma\in \Gamma_k}$, it holds 
$\hat{\pi}\in W(q)$.
\item[{\rm (ii)}] $\pi^r\in \mbb{Q}_p$ for some integer $r\ge 1$.
\item[{\rm (iii)}]
Let $\hat{\pi}$ be as in (i) and $f(X)\in \mbb{Q}[X]$ the minimal  polynomial of $\hat{\pi}$ over $\mbb{Q}$. Then, 
the degree of any irreducible factor  of $f(X)$  in $\mbb{Q}_p[X]$ 
is not of the form $\frac{n}{m}\cdot [\mbb{Q}_p(\hat{\pi}):\mbb{Q}_p]$ 
for any integers $n\ge 2$ and $m\ge \hat{f}$,
where $\hat{f}$ is the least common multiple of the denominators of $\frac{\mrm{ord}_v(\hat{\pi})f_v}{f}$
for finite places $v$ of $\mbb{Q}(\hat{\pi})$ above $p$.
Here, $\mrm{ord}_v$ is the valuation associated with $v$ normalized by $\mrm{ord}_v(\mbb{Q}(\hat{\pi})^{\times})=\mbb{Z}$
and $f_v$ is the residue degree of $\mbb{Q}(\hat{\pi})/\mbb{Q}$ at $v$.
\end{itemize}
Then, $k_{\pi}$ is not Kummer-faithful. 
\end{itemize}
\end{theorem}
\fi

The aim of this section is to prove the above theorem.
Our argument relies heavily on both Honda–Tate theory and $p$-adic Hodge theory.
We begin by briefly reviewing the aspects of these theories that are essential for our proof.
Let $A$ be a simple abelian variety over $\mbb{F}_q$ of dimension $g>0$
(here, ``simple" stands for not ``$\overline{\mbb{F}}_q$-simple" but ``$\mbb{F}_q$-simple").
Put $D=\mrm{End}_{\mbb{F}_q}(A) \otimes_{\mbb{Z}} \mbb{Q}$ and 
denote by $\pi_A \in D$ the geometric Frobemius endomorphism of $A_{/\mbb{F}_q}$.
Weil showed that $\pi_A$ is a Weil $q$-integer. 
The degree $2g$ characteristic polynomial $f_{\bar{A}/\mbb{F}_q}\in \mbb{Z}[T]$ 
of $\pi_A$ has a single monic irreducible factor over $\mbb{Q}$,
and  $f_{\bar{A}/\mbb{F}_q}$  coincides with 
the characteristic polynomial 
the action of the arithmetic $q$-Frobenius element of $G_{\mbb{F}_q}$ on 
the $\ell$-adic Tate module $V_{\ell}(A)$ of $A$ for any prime $\ell\not=p$.

\begin{theorem}
\label{Honda-Tate}
Let $\mbb{F}$ be the finite field of order $q$.
\begin{itemize}
\item[{\rm (1)}] The assignment $A\mapsto \pi_A$  defines a bijection from the set of 
isogeny classes of simple abelian varieties over $\mbb{F}$ to 
the set of $G_{\mbb{Q}}$-conjugacy classes of Weil $q$-integers. 
\item[{\rm (2)}] 
Let $A$ be an  abelian vareity over $\mbb{F}$
which is $\mbb{F}$-isogenous to a power of an $\mbb{F}$-simple abelian variety.
Then there exist a finite extension $\mbb{F}'/\mbb{F}$
and a $p$-adic field $k'$ with residue field $\mbb{F}'$
such that  $A$ is $\mbb{F}'$-isogenous to the reduction of 
a CM abelian variety with good reduction over $k'$. 
\end{itemize}
\end{theorem}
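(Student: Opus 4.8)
The plan is to treat part (1) as the classical Honda--Tate theorem, assembled from three standard inputs, and to deduce part (2) from the surjectivity half of (1) together with the theory of complex multiplication; I would cite Weil, Tate, Honda and the Shimura--Taniyama theory for the ingredients rather than reprove them.

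For (1), the first step is \emph{well-definedness}: that each $\pi_A$ is a Weil $q$-integer is Weil's Riemann hypothesis for abelian varieties over finite fields, applied to the eigenvalues of geometric Frobenius on $V_{\ell}(A)$, and since $f_{\bar A/\mbb{F}_q}\in\mbb{Z}[T]$ the $G_{\mbb{Q}}$-conjugacy class of $\pi_A$ depends only on the isogeny class of $A$. The second step is \emph{injectivity}: here I would invoke Tate's isogeny theorem, that $\mrm{Hom}_{\mbb{F}_q}(A,B)\otimes\mbb{Z}_{\ell}\xrightarrow{\ \sim\ }\mrm{Hom}_{G_{\mbb{F}_q}}(T_{\ell}A,T_{\ell}B)$ for $\ell\neq p$; consequently $\mrm{Hom}_{\mbb{F}_q}(A,B)\neq 0$ if and only if the semisimple Frobenius actions on $V_{\ell}A$ and $V_{\ell}B$ share a common irreducible constituent, if and only if the minimal polynomials of $\pi_A$ and $\pi_B$ over $\mbb{Q}$ coincide; for \emph{simple} $A,B$ this forces $A\sim B$, giving injectivity. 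The third step is \emph{surjectivity}: given a Weil $q$-integer $\pi$, one must exhibit a simple abelian variety over $\mbb{F}_q$ with Frobenius conjugate to $\pi$. I would follow Honda: after replacing $\mbb{F}_q$ by a suitable finite extension $\mbb{F}_{q^n}$ one realizes, via the Shimura--Taniyama theory of CM abelian varieties and the Shimura--Taniyama formula for the Frobenius of a good reduction, a CM abelian variety over a number field whose reduction has Frobenius conjugate to $\pi^n$; then one descends from $\mbb{F}_{q^n}$ back to $\mbb{F}_q$ using Tate's description of $\mrm{End}^0$ (via the Brauer invariants at the places of $\mbb{Q}(\pi)$) to see that $\pi$ itself, not merely $\pi^n$, is realized over $\mbb{F}_q$.

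For (2), it suffices to lift an $\mbb{F}$-simple $B$: if $A\sim_{\mbb{F}}B^m$ and $\mcal{B}/k'$ is a CM lift of $B$ with good reduction, then $\mcal{B}^m$ is again CM (its endomorphism algebra contains the diagonal $E^m$, an \'etale algebra of degree $2\dim\mcal{B}^m$) with good reduction over $k'$, and its reduction is $\mbb{F}'$-isogenous to $B_{\mbb{F}'}^m\sim A_{\mbb{F}'}$. By (1) and Tate's theorem, $D=\mrm{End}_{\mbb{F}}(B)\otimes\mbb{Q}$ is a division algebra with center $\mbb{Q}(\pi_B)$, of dimension $\bigl(2\dim B/[\mbb{Q}(\pi_B):\mbb{Q}]\bigr)^2$ over its center, and $D$ contains a CM field $E$ of degree $2\dim B$ over $\mbb{Q}$, so $B$ has sufficiently many complex multiplications. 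The CM abelian varieties of the corresponding CM type over $\mbb{C}$ are defined over a number field $H$, have potential good reduction at every finite place by the Serre--Tate criterion, and after enlarging $H$ and choosing a place $w\mid p$ one obtains good reduction over a $p$-adic field $k'$ with residue field $\mbb{F}'$; by the Shimura--Taniyama formula the Frobenius of this reduction is, up to conjugacy and a power absorbed by a further enlargement of $\mbb{F}'$, equal to that of $B_{\mbb{F}'}$, and then (1) converts this agreement of Frobenii into the desired $\mbb{F}'$-isogeny.

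The step I expect to be the main obstacle is the surjectivity in (1), equivalently the descent to the \emph{prescribed} residue field $\mbb{F}_q$ in part (2): producing a CM lift over \emph{some} finite extension $\mbb{F}_{q^n}$ is essentially formal from the Shimura--Taniyama existence theorem, but realizing $\pi$ rather than $\pi^n$ over $\mbb{F}_q$ requires the full strength of Honda's argument, which controls how endomorphism algebras and Frobenius elements behave under base change of the finite field. Everything else (Weil's Riemann hypothesis, Tate's isogeny theorem, Tate's computation of $\mrm{End}^0$, the CM structure, and Shimura--Taniyama reduction) I would take from the literature.
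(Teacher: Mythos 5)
The paper gives no proof of this theorem: it simply records that (1) is the central result of Honda--Tate theory and that (2) is the Honda--Tate CM lifting theorem, and refers the reader to Section 1.6 of \cite{CCO14}. Your outline is a correct assembly of exactly the classical ingredients that reference develops (Weil's Riemann hypothesis for well-definedness, Tate's isogeny theorem for injectivity, Honda's surjectivity via Shimura--Taniyama CM reduction, and Tate's Brauer-invariant computation of $\mrm{End}^0$ for the CM structure in (2)), so it matches the paper's intended source; the one imprecision is that the descent from $\mbb{F}_{q^n}$ to $\mbb{F}_q$ in the surjectivity step is standardly effected by Weil restriction of scalars, with Tate's description of $\mrm{End}^0$ used to identify the relevant simple isogeny factor, rather than serving as the descent mechanism itself --- but this is precisely the step you correctly single out as requiring the full strength of Honda's argument.
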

The first assertion is a central result of Honda–Tate theory,
while the second is commonly referred to as the "Honda–Tate CM lifting theorem".
For more details, we refer the reader to Section 1.6 of \cite{CCO14}.

Next, we summarize some fundamental facts about $p$-adic Hodge theory.
For an introduction to the basic notions of $p$-adic Hodge theory,
it is helpful for the reader to refer  \cite{Fon94a} and \cite{Fon94b}.  
Let $K$ be a $p$-adic field.
Below we often use the following notations.
Let $B_{\mrm{cris}}$ be the Fontaine's $p$-adic period ring
and set $D^K_{\mrm{cris}}(V) := (B_{\mrm{cris}}\otimes_{\mbb{Q}_p} V)^{G_K}$
for any $\mbb{Q}_p$-representation $V$ of $G_K$.
Let us denote by $K_0$ the maximal unramified subextension of $K/\mbb{Q}_p$.
Since $B_{\mrm{cris}}^{G_K}=K_0$,  $D^K_{\mrm{cris}}(V)$ is a $K_0$-vector space.
Denote by $\vphi_{K_0}$ the arithmetic Frobenius map of $K_0$, that is, 
the (unique) lift of the $p$-th power map on the residue field of $K_0$.
This extends to $B_{\mrm{cris}}$, and moreover this extended Frobenius map induces 
a $K_0$-semi-liniear endomorphism
$\vphi$ on $D^K_{\mrm{cris}}(V)$, 
which is also called Frobenius endomorphism.
Note that the $f_K$-th iterate $\vphi^{f_K}$ of $\vphi$ on $D^K_{\mrm{cris}}(V)$
is $K_0$-linear
where $f_{K}$ is the residue degree of $K/\mbb{Q}_p$.
It is known that 
$D^K_{\mrm{cris}}(V)$ is of finite dimensional over $K_0$ with 
$\dim_{K_0} D^K_{\mrm{cris}}(V)\le \dim_{\mbb{Q}_p} V$.
We say that $V$ is crystalline if the equality 
$\dim_{K_0} D^K_{\mrm{cris}}(V)=\dim_{\mbb{Q}_p} V$ holds.
Let us recall locally algebraic theory for crystalline characters; 
see \cite[Appendix of Chapter III]{Ser98} and   \cite[Appendix B]{Con11}
for more precise information.
Let $F$ be a $p$-adic field and  $\psi\colon G_K\to F^{\times}$ a continuous character.
We denote by $F(\psi)$ the $\mbb{Q}_p$-representation of $G_K$
underlying a $1$-dimensional $F$-vector space endowed with an $F$-linear action by $G_K$ via $\psi$.
We say that $\psi$ is  crystalline   if $F(\psi)$ is crystalline. 
Suppose that $\psi$ is crystalline.
For any $\sigma\in \Gamma_F$, let $\chi_{\sigma F}\colon I_{\sigma F}\to \sigma F^{\times}$ 
be the restriction to the inertia $ I_{\sigma F}$ of the Lubin-Tate character associated  
with any choice of uniformizer of $\sigma F$ 
(it depends on the choice of a uniformizer of $\sigma F$,
but its restriction to the inertia subgroup does not).
Assume that $K$ contains the Galois closure of $F/\mbb{Q}_p$. 
Then, we have 
\begin{equation}
\label{psiHT}
\psi = 
\prod_{\sigma \in \Gamma_F} \sigma^{-1} \circ \chi_{\sigma F}^{h_{\sigma}}
\end{equation}
on the inertia $I_K$ for some integer $h_{\sigma}$ 
where $\Gamma_F:=\mrm{Hom}_{\mbb{Q}_p}(F,\overline{\mbb{Q}}_p)$.
Note that $\{h_{\sigma} \mid \sigma\in \Gamma_F \}$ 
is the (multi-)set of Hodge-Tate weights of $F(\psi)$, 
that is, $C\otimes_{\mbb{Q}_p} F(\psi)\simeq \oplus_{\sigma \in \Gamma_F} C(h_{\sigma})$
where $C$ is the completion of $\overline{\mbb{Q}}_p$.

Here we recall a remarkable observation of Conrad for crystalline characters.
We denote by $\underline{K}^{\times}$ 
the Weil restriction  $\mrm{Res}_{K/\mbb{Q}_p}(\mbb{G}_m)$.
This is an algebraic torus such that, for a $\mbb{Q}_p$-algebra $R$,
the $R$-valued points $\underline{K}^{\times}(R)$ of $\underline{K}^{\times}$ 
is $\mbb{G}_m(R\otimes_{\mbb{Q}_p} K)$. 
If $\psi\colon G_K\to F^{\times}$ is 
a continuous character, we may regard $\psi$ as a character of 
$\mrm{Gal}(K^{\mrm{ab}}/K)$
where $K^{\mrm{ab}}$ is the maximal abelian extension of $K$.
We denote by $\psi_K\colon K^{\times}\to F^{\times}$
the composite of 
the reciprocity map $K^{\times}\to \mrm{Gal}(K^{\mrm{ab}}/K)$ of local class field theory 
and 
$\psi\colon \mrm{Gal}(K^{\mrm{ab}}/K)\to F^{\times}$. 
For example, if $\chi\colon G_K\to \mbb{Q}_p^{\times}$
is the $p$-adic cyclotomic character,
we have $\chi_K(x)=\left(\mrm{Nr}_{K/\mbb{Q}_p}(x)|\mrm{Nr}_{K/\mbb{Q}_p}|_p\right)^{-1}$
for $x\in K^{\times}$
where $|\cdot|_p$ is the $p$-adic absolute value normalized by $|p|_p=p^{-1}$.
The following  result is shown in 
Proposition B.4 of \cite{Con11}.
\begin{proposition}
\label{Conrad}
Let $\psi\colon G_K\to F^{\times}$ be 
a continuous character.
\begin{itemize}
\item[{\rm (1)}] 
$F(\psi)$ is crystalline if and  only if 
there exists a {\rm(}necessarily unique{\rm )} $\mbb{Q}_p$-homomorphism 
$\psi_{\mrm{alg},K}\colon \underline{K}^{\times}\to \underline{F}^{\times}$
such that $\psi_K$ and $\psi_{\mrm{alg},K}$ {\rm (}on $\mbb{Q}_p$-points{\rm )} 
coincides on $\cO_K^{\times} (\subset \underline{K}^{\times}(\mbb{Q}_p))$.
\item[{\rm (2)}]
Assume that $F(\psi)$ is crystalline and let $\psi_{\mrm{alg},K}$ be as in {\rm (1)}. 
Then,  
$D^K_{\mrm{cris}}(F(\psi^{-1}))$ 
is free of rank $1$ over $K_0\otimes_{\mbb{Q}_p} F$ 
and its $K_0$-linear endomorphism $\vphi^{f_K}$ 
is given by the action of the product 
$$
\psi_K(\pi_K)\cdot \psi_{\mrm{alg},K}^{-1}(\pi_K)\in F^{\times},
$$
where $\pi_K$ is any uniformizer of $K$
and $f_{K}$ is the residue degree of $K/\mbb{Q}_p$.
Note that this product is independent of the choice of $\pi_K$.
\end{itemize}
\end{proposition}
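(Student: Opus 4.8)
The statement is Conrad's Proposition~B.4 of \cite{Con11}, and the plan is to reconstruct its proof through the theory of locally algebraic abelian $p$-adic characters (\cite[Appendix of Chapter III]{Ser98}, \cite[Appendix B]{Con11}), of which the inertial formula \eqref{psiHT} is the explicit shadow. The basic dictionary is the following. A $\mbb{Q}_p$-homomorphism $\psi_{\mrm{alg},K}\colon \underline{K}^{\times}\to\underline{F}^{\times}$ is the datum of a $G_{\mbb{Q}_p}$-stable family of integers attached to the embeddings of $K$, and on $\mbb{Q}_p$-points it yields a homomorphism $K^{\times}\to F^{\times}$ expressed through the embeddings of $K$ and $F$ into $\overline{\mbb{Q}}_p$. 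Since $\cO_K^{\times}$ is open in $K^{\times}=\underline{K}^{\times}(\mbb{Q}_p)$ and an open subgroup of the $\mbb{Q}_p$-points of a torus is Zariski dense, two $\mbb{Q}_p$-homomorphisms agreeing on $\cO_K^{\times}$ agree identically; this already gives the uniqueness clause of (1).

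For the forward implication of (1) I would use that a crystalline character is Hodge--Tate, hence locally algebraic: there is a $\mbb{Q}_p$-homomorphism $\psi_{\mrm{alg},K}$, built from the Hodge--Tate weights $\{h_{\sigma}\}$ of $F(\psi)$, agreeing with $\psi_K$ on some open subgroup of $\cO_K^{\times}$. The extra input of crystallinity, beyond mere Hodge--Tate-ness, is precisely what upgrades this to agreement on all of $\cO_K^{\times}$: the defect $\psi_K\cdot\psi_{\mrm{alg},K}^{-1}$ is a finite-order character of $\cO_K^{\times}$, corresponding under the reciprocity map to a finite-order character of the inertia $I_K$, and a Hodge--Tate character is crystalline exactly when this ramified finite part is trivial, equivalently when $\psi|_{I_K}$ is a genuine product of integer powers of Lubin--Tate characters as in \eqref{psiHT}, with no finite-order twist. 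For the converse, given $\psi_{\mrm{alg},K}$ agreeing with $\psi_K$ on all of $\cO_K^{\times}$, the character $\psi$ is a fortiori locally algebraic, and the triviality of the defect forces $\psi|_{I_K}$ to be the corresponding product of powers of Lubin--Tate characters; each such character is crystalline, tensor products of crystalline characters are crystalline, and crystallinity is unaffected by unramified twists, so $F(\psi)$ is crystalline.

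For (2) I would first establish freeness. As $F(\psi)$ is crystalline of $\mbb{Q}_p$-dimension $[F:\mbb{Q}_p]$, the module $D^K_{\mrm{cris}}(F(\psi^{-1}))$ has $K_0$-dimension $[F:\mbb{Q}_p]$; the crystalline comparison is $F$-linear, so $D^K_{\mrm{cris}}(F(\psi^{-1}))$ is a module over the \'etale $\mbb{Q}_p$-algebra $K_0\otimes_{\mbb{Q}_p}F$, which is a product of fields of total $K_0$-dimension $[F:\mbb{Q}_p]$. Matching dimensions on each idempotent factor, the nonvanishing of each being guaranteed by $F$-linearity of the comparison, shows the module is free of rank $1$. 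For the Frobenius eigenvalue I would use multiplicativity of $D^K_{\mrm{cris}}$ under tensor products together with the factorization of $\psi$, after possibly enlarging $K$ to contain the Galois closure of $F$, into a product of powers of Lubin--Tate characters and an unramified character. On each Lubin--Tate factor the $K_0$-linear operator $\vphi^{f_K}$ on its rank-one crystalline Dieudonn\'e module is computed directly in terms of the uniformizer, while on the unramified factor $\vphi^{f_K}$ acts by the Frobenius eigenvalue, i.e.\ by the value at a uniformizer. Combining these contributions and tracking the $\psi^{-1}$ together with the geometric-Frobenius normalization of reciprocity, pinned down by the cyclotomic case $\chi_K(x)=(\mrm{Nr}_{K/\mbb{Q}_p}(x)|\mrm{Nr}_{K/\mbb{Q}_p}|_p)^{-1}$ for which the recipe returns $\vphi^{f_K}=p^{f_K}$, yields the asserted value $\psi_K(\pi_K)\cdot\psi_{\mrm{alg},K}^{-1}(\pi_K)$. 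Its independence of $\pi_K$ is immediate from (1): replacing $\pi_K$ by $\pi_K u$ with $u\in\cO_K^{\times}$ multiplies the product by $\psi_K(u)\psi_{\mrm{alg},K}^{-1}(u)=1$.

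The main obstacle is the precise Frobenius computation in (2), where all normalizations must be tracked consistently: the inverse in $F(\psi^{-1})$, the choice of geometric versus arithmetic Frobenius in the reciprocity map, the direct evaluation of $\vphi$ on the crystalline Dieudonn\'e module of a Lubin--Tate character, and the interaction of the $f_K$-th power of $\vphi$ with the residue degree. A secondary subtlety lies in (1), namely the genuinely crystalline step that promotes agreement on an open subgroup of $\cO_K^{\times}$ to agreement on all of $\cO_K^{\times}$, which is where the full force of \eqref{psiHT} and the compatibility of reciprocity with Lubin--Tate characters enters.
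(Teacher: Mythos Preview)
The paper does not prove this proposition at all; it simply records it as Proposition~B.4 of \cite{Con11} and uses it as a black box. Your proposal correctly identifies the source and sketches the argument along the lines of Conrad's Appendix~B and Serre's locally algebraic theory, which is the standard route and matches what the cited reference does.
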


Motivated by the above proposition,
for  a crystalline character  $\psi\colon G_K\to F^{\times}$  of $G_K$, 
we set 
$$
\Phi_K(\psi):=\psi_K(\pi_K)\cdot \psi_{\mrm{alg},K}^{-1}(\pi_K)\in F^{\times}.
$$
This is invariant under the coefficient field extension $F'/F$.
For a finite extension $K'/K$, we set $\Phi_{K'}(\psi):=\Phi_{K'}(\psi|_{G_{K'}})\in F^{\times}$.
For example,  one verifies  $\Phi_K(\chi_{\pi})=\pi^{f_{K/k}}$ where $\chi_{\pi}\colon G_k\to k^{\times}$ is 
the Lubin-Tate character of $k$  associated with a uniformizer $\pi$
and 
$K$ is a finite extension of $k$ with residue degree $f_{K/k}$
(see also Lemma \ref{Phi} (2) below).  
Note that the character
$\psi_{{\rm alg},K}$ on $\mbb{Q}_p$-points  coincides with 
$\prod_{\sigma \in \Gamma_F} \sigma^{-1} \circ \mrm{Nr}_{K/\sigma F}^{-h_{\sigma}}$
if $K$ contains the Galois closure of $F/\mbb{Q}_p$ and $\psi$ is of the form \eqref{psiHT}. 
By Proposition \ref{Conrad} (2), one can verify 
that the roots of the characteristic polynomial of the $\vphi^{f_K}$-action on the $K_0$-vector space 
$D^K_{\mrm{cris}}(F(\psi^{-1}))$ are the $\mbb{Q}_p$-conjugates of $\Phi_K(\psi)$, that is,
$$
\det (T-\vphi^{f_K}\mid D^K_{\mrm{cris}}(F(\psi^{-1})))=\prod_{\sigma\in \Gamma_F} (T-\sigma \Phi_K(\psi)).
$$
We summarize some basic properties of $\Phi_K(\psi)$.
\begin{lemma}
\label{Phi}
Let  $\psi\colon G_K\to F^{\times}$ be a crystalline character
and $\{h_{\sigma}\}_{\sigma\in \Gamma_F}$ the {\rm (}multi-{\rm )}set of Hodge-Tate 
weights of $F(\psi)$.

\begin{itemize}
\item[{\rm (0)}]
The $\vphi^{f_K}$-action on the free $K_0\otimes_{\mbb{Q}_p} F$-module 
$D^K_{\mrm{cris}}(F(\psi^{-1}))$ of rank $1$ 
is given $\Phi_K(\psi)$.

\item[{\rm (1)}] For a crystalline character $\psi'\colon G_K\to F^{\times}$, we have 
$\Phi_K(\psi \psi')=\Phi_K(\psi)\Phi_K(\psi')$.

\item[{\rm (2)}] 
For a finite extension $K'/K$ with residue degree $f_{K'/K}$, 
we have $\Phi_{K'}(\psi)=\Phi_K(\psi)^{f_{K'/K}}$.

\item[{\rm (3)}] For $\sigma\in \Gamma_F$, we have 
$\Phi_K(\sigma\psi)=\sigma \Phi_K(\psi)$.

\item[{\rm (4)}] $\Phi_K(\psi)\in \cO_F^{\times}$ if and only if  $\sum_{\sigma\in \Gamma_F} h_{\sigma}=0$.

\item[{\rm (5)}] Assume either ``$h_{\sigma}\ge 0$ for all $\sigma$" or ``$h_{\sigma}\le 0$ for all $\sigma$".
If $\Phi_K(\psi)=1$, then $\psi=1$.

\item[{\rm (6)}] Let $\pi_F$ be a uniformizer of $F$ with the property that $\pi_F^r\in \mbb{Q}_p$ for some $r>0$
(such $\pi_F$ exists if $F/\mbb{Q}_p$ is tamely ramified).
If $K\supset F$, $F$ is a Galois extension of $\mbb{Q}_p$ and  $\Phi_K(\psi)=1$, then we have
$$
\psi = 
\prod_{\sigma \in \Gamma_F} \sigma^{-1} \circ \chi_{\pi_F}^{h_{\sigma}}
$$
on an open subgroup of $G_K$.
Moreover, 
\begin{itemize}
\item[{\rm (i)}] $\psi\colon G_K\to F^{\times}$ extends to $G_F$, and 
\item[{\rm (ii)}] there exists a character $\epsilon\colon G_F\to \overline{\mbb{Q}}_p^{\times}$ 
 such that $\epsilon^r=1$ and 
$
\psi = \epsilon \cdot
\prod_{\sigma \in \Gamma_F} \sigma^{-1} \circ \chi_{\pi_F}^{h_{\sigma}}
$ on $G_F$.
\end{itemize}
\end{itemize}
\end{lemma}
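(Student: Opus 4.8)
The plan is to establish the six assertions more or less in the stated order, bootstrapping each from the explicit description of $\Phi_K(\psi)$ given by Proposition \ref{Conrad}(2), namely $\Phi_K(\psi) = \psi_K(\pi_K)\cdot\psi_{\mrm{alg},K}^{-1}(\pi_K)$, together with the formula $\psi_{\mrm{alg},K} = \prod_{\sigma\in\Gamma_F}\sigma^{-1}\circ\mrm{Nr}_{K/\sigma F}^{-h_\sigma}$ on $\mbb{Q}_p$-points. Assertion (0) is just a restatement of Proposition \ref{Conrad}(2). For (1), both $\psi_K$ and $\psi_{\mrm{alg},K}$ are multiplicative in $\psi$ (the reciprocity map is fixed, and $(\psi\psi')_{\mrm{alg},K}=\psi_{\mrm{alg},K}\psi'_{\mrm{alg},K}$ by uniqueness in Proposition \ref{Conrad}(1)), so $\Phi_K$ is a homomorphism on crystalline characters. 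For (2): if $f_{K'/K}$ is the residue degree, then a uniformizer $\pi_K$ of $K$ is also a uniformizer of $K'$ when $K'/K$ is unramified; in general one reduces to the unramified and totally ramified cases separately, using that under the reciprocity maps the norm $\mrm{Nr}_{K'/K}$ corresponds to the inclusion $G_{K'}^{\mrm{ab}}\hookrightarrow G_K^{\mrm{ab}}$, and that $\vphi^{f_{K'}} = (\vphi^{f_K})^{f_{K'/K}}$ on $D_{\mrm{cris}}$; alternatively, compare characteristic polynomials of the two Frobenius powers on the same crystalline module. Assertion (3) follows because applying $\sigma\in\Gamma_F$ to the coefficient field conjugates both $\psi_K(\pi_K)$ and $\psi_{\mrm{alg},K}(\pi_K)$, or again from the determinant formula $\det(T-\vphi^{f_K}) = \prod_{\tau}(T-\tau\Phi_K(\psi))$ which is visibly $G_{\mbb{Q}_p}$-stable.

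For (4), observe that $\psi_K(\pi_K)\in\cO_F^\times$ always (the reciprocity map sends a uniformizer to a Frobenius-type element whose associated character value is a unit — more precisely, one checks $v_F(\psi_K(\pi_K))=0$ from the crystalline condition on the unramified part), while the valuation of $\psi_{\mrm{alg},K}^{-1}(\pi_K) = \prod_\sigma \sigma^{-1}\mrm{Nr}_{K/\sigma F}(\pi_K)^{h_\sigma}$ computes to a quantity proportional to $\sum_\sigma h_\sigma$; hence $\Phi_K(\psi)$ is a unit iff $\sum_\sigma h_\sigma = 0$. (More conceptually: $\Phi_K(\psi)$ is a unit iff the slope of the isocrystal $D_{\mrm{cris}}(F(\psi^{-1}))$ is zero iff the Hodge–Tate weights sum to zero.) Assertion (5) is the key rigidity input: under the one-sided weight hypothesis, $F(\psi)$ or $F(\psi^{-1})$ is effective (has nonnegative weights), so it is a quotient/sub of some $\mbb{Z}_p(0)$-power crystalline representation; concretely, $\psi$ restricted to inertia is a product of nonnegative powers of Lubin–Tate characters by \eqref{psiHT}, each of which is nontrivial on inertia unless its exponent vanishes, and $\Phi_K(\psi)=1$ forces (via (4) and a comparison of Newton and Hodge data, i.e.\ the fact that a rank-one crystalline isocrystal with trivial Frobenius and effective weights must have all weights zero) all $h_\sigma=0$; then $\psi$ is unramified with $\Phi_K(\psi)=\psi_K(\pi_K)=1$, and since a uniformizer generates $K^\times$ modulo $\cO_K^\times$ on which an unramified character is trivial, $\psi_K=1$, hence $\psi=1$.

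For (6), write $\psi = \prod_\sigma \sigma^{-1}\circ\chi_{\pi_F}^{h_\sigma}\cdot\eta$ on $I_K$ using \eqref{psiHT} and the fact that $\chi_{\sigma F}$ and $\sigma^{-1}\chi_{\pi_F}$ differ only by the choice of uniformizer hence agree on inertia; here $\eta$ is the remaining unramified-on-$I_K$ discrepancy, so $\psi$ and $\prod_\sigma\sigma^{-1}\chi_{\pi_F}^{h_\sigma}$ agree on $I_K$, i.e.\ on an open subgroup of $G_K$ up to an unramified twist; applying (1) and the known value $\Phi_K(\chi_{\pi_F}) = $ (a power of $\pi_F$, by the $\Phi_K(\chi_\pi)=\pi^{f_{K/k}}$ computation recalled before the lemma) together with $\Phi_K(\psi)=1$ pins down the unramified part to be a finite-order character, and the hypothesis $\pi_F^r\in\mbb{Q}_p$ makes $\prod_\sigma\sigma^{-1}\chi_{\pi_F}^{h_\sigma}$ extend to $G_F$ (the product over the full Galois-conjugate set is Galois-equivariant), giving (i); then $\epsilon := \psi\cdot(\prod_\sigma\sigma^{-1}\chi_{\pi_F}^{h_\sigma})^{-1}$ is unramified with $\Phi$-value $1$, hence of finite order, and chasing the $r$-th power relation $\pi_F^r\in\mbb{Q}_p^\times$ through local class field theory shows $\epsilon^r=1$, giving (ii). The main obstacle I expect is assertion (5): making precise that a rank-one crystalline $F$-representation with one-signed Hodge–Tate weights and $\Phi_K = 1$ must be trivial requires the comparison between the Newton polygon (determined by $\Phi_K$) and the Hodge polygon (determined by the $h_\sigma$) — weak admissibility forces Newton $\ge$ Hodge, and triviality of $\Phi_K$ together with effectivity squeezes both to the zero polygon — and then the passage from ``unramified with $\Phi_K=1$'' to ``trivial'' must be argued carefully via local class field theory; everything else is bookkeeping with the explicit formulas for $\psi_K$ and $\psi_{\mrm{alg},K}$.
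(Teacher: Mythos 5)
Your treatment of parts (0)--(5) is correct and is essentially the paper's own argument: everything reduces to the identity $\Phi_K(\psi)=\psi_K(\pi_K)\cdot \psi_{\mrm{alg},K}^{-1}(\pi_K)$ together with the fact that $\psi_K$ and $\psi_{\mrm{alg},K}$ agree on $\cO_K^{\times}$. Two remarks. For (2) the paper avoids your case split: writing $\mrm{Nr}_{K'/K}(\pi_{K'})=\pi_K^{f_{K'/K}}u$ with $u\in \cO_K^{\times}$, the unit contributions to $\psi_K$ and $\psi_{\mrm{alg},K}^{-1}$ cancel and the exponent $f_{K'/K}$ comes out in one line. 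For (5), the Newton--Hodge digression you flag as the ``main obstacle'' is not needed: your own chain --- (4) gives $\sum_{\sigma}h_{\sigma}=0$, one-signedness gives all $h_{\sigma}=0$, hence $\psi_{\mrm{alg},K}=1$, hence $\psi_K$ is trivial on $\cO_K^{\times}$ and on $\pi_K$ --- is already complete and is exactly what the paper does.

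The genuine gap is in (6), specifically claim (i). Saying that $\prod_{\sigma}\sigma^{-1}\circ \chi_{\pi_F}^{h_{\sigma}}$ ``is Galois-equivariant, giving (i)'' does not prove (i): that product is a character of $G_F$ by construction, and what has to be shown is that $\psi$ itself, a priori only a character of $G_K$, extends to $G_F$. In your setup the discrepancy $\epsilon=\psi\cdot\bigl(\prod_{\sigma}\sigma^{-1}\circ \chi_{\pi_F}^{h_{\sigma}}\bigr)^{-1}$ is defined only on $G_K$, so for (i) and (ii) you would still need to extend this unramified finite-order character from $\mrm{Gal}(K^{\mrm{ur}}/K)$ to $\mrm{Gal}(F^{\mrm{ur}}/F)$; this is true (a finite-order character of an open subgroup of $\hat{\mbb{Z}}$ extends, the target being divisible), but it is precisely the content of (i) and cannot be waved away. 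The paper argues more directly: the identity $\psi_K(x)=\prod_{\sigma}\sigma^{-1}\mrm{Nr}_{K/F}(x)^{-h_{\sigma}}$ holds for $x\in\cO_K^{\times}$ by the locally algebraic description of $\psi_K$ on units, and for $x=\pi_K$ it is exactly the hypothesis $\Phi_K(\psi)=1$; hence it holds for all $x\in K^{\times}$, so $\psi_K$ factors through $\mrm{Nr}_{K/F}$, which by local class field theory is precisely the statement that $\psi$ extends to $G_F$ with $\psi_F(x)=\prod_{\sigma}\sigma^{-1}x^{-h_{\sigma}}$; part (ii) then follows by evaluating $\psi_F$ at $\pi_F^{r}$ and using $\sum_{\sigma}h_{\sigma}=0$ from (4). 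A further small slip: $\Phi_K(\epsilon)$ is not $1$ in general but equals $\prod_{\sigma}(\sigma^{-1}\pi_F)^{-f_{K/F}h_{\sigma}}$ up to normalization, which is only an $r$-th root of unity; your subsequent appeal to $\pi_F^{r}\in\mbb{Q}_p$ still yields $\epsilon^{r}=1$, so this is harmless once the extension issue is repaired.
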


\begin{proof}
(0) is Proposition \ref{Conrad} (2).  
(1) and (3) are clear from the definition of $\Phi_K$.
We consider (2). By local class field theory, 
$\psi_{K'}\colon (K')^{\times}\to F^{\times} $ is the composite of the norm map 
$\mrm{Nr}_{K'/K}\colon K'^{\times}\to K^{\times}$  and 
$\psi_K\colon K^{\times}\to F^{\times}$, and also
$\psi_{\mrm{alg},K'}\colon \underline{K'}^{\times} \to \underline{F'}^{\times}$
 is the composite of the norm map 
$\mrm{Nr}_{K'/K}\colon \underline{K'}^{\times} \to \underline{K}^{\times} $ and 
$\psi_{{\rm alg},K}\colon \underline{K}^{\times} \to \underline{F}^{\times} $.
Take uniformizers $\pi_K$ and $\pi_{K'}$ of $K$ and $K'$, respectively.
Writing $f=f_{K'/K}$ and $\mrm{Nr}_{K'/K}(\pi_{K'})=\pi_K^f u$ for some $u\in \cO_K^{\times}$,
we have 
$\Phi_{K'}(\psi)=\psi_{K'}(\pi_{K'})\cdot \psi_{\mrm{alg},K'}^{-1}(\pi_{K'})
=\psi_{K}(\pi_{K}^f u)\cdot \psi_{\mrm{alg},K}^{-1}(\pi_K^f u)
=(\psi_{K}(\pi_{K})\cdot \psi_{\mrm{alg},K}^{-1}(\pi_K))^f
=\Phi_{K}(\psi)^f$. This shows (2).
Next we show (4). 
By (2), 
we may assume that $K$
contains the Galois closure of $F/\mbb{Q}_p$. 
Thus $\psi$ is of the form 
\eqref{psiHT} on $I_K$.
Note that 
$\Phi_K(\psi)\in \cO_F^{\times}$ if and only if $\psi_{\mrm{alg},K}(\pi_K)\in \cO_F^{\times}$
since $\psi_K$ has values in $\cO_F^{\times}$.
Since $\psi_{\mrm{alg},K}(\pi_K)$  coincides with 
$\prod_{\sigma \in \Gamma_M} \sigma^{-1} \mrm{Nr}_{K/\sigma F}(\pi_K)^{-h_{\sigma}}$,
we see that $\psi_{\mrm{alg},K}(\pi_K)$ is a $p$-adic unit if and only if 
$\sum_{\sigma\in \Gamma_F} h_{\sigma}=0$, which shows (4). 
We show (5). It follows from the assumption and  (4) that $h_{\sigma}=0$ for all $\sigma$.
Thus $\psi_{\mrm{alg},K}$ is trivial  and then $\psi_K$ is trivial on $\cO_K^{\times}$.
Furthermore, $\Phi_K(\psi)=1$ implies $\psi_K(\pi_K)=1$. Hence we have $\psi_K=1$ on $K^{\times}$,
equivalently, $\psi=1$. 
Finally, we show (6). 
By $\Phi_K(\psi)=1$, we have 
\begin{equation}
\label{psieq}
\psi_K(x)=\prod_{\sigma \in \Gamma_F} \sigma^{-1} \circ \mrm{Nr}_{K/F}(x)^{-h_{\sigma}}
\end{equation}
if $x$ is any uniformizer $\pi_K$ of $K$.
It follows from the definition of $\{h_{\sigma}\}_{\sigma\in \Gamma_F}$ 
that \eqref{psieq} also holds for any $x\in \cO_K^{\times}$.
Hence the equality \eqref{psieq} holds for every $x\in K^{\times}$.
Thus $\psi\colon G_K\to F^{\times}$ extends to $G_F$ so that 
$$
\psi_F(x)=\prod_{\sigma \in \Gamma_F} \sigma^{-1}x^{-h_{\sigma}}
$$ 
for $x\in F^{\times}$.
Taking $\pi_F$  as in the statement of (6),  by (4), we see 
$$
\psi_F(\pi_F^r)=\prod_{\sigma \in \Gamma_F} (\sigma^{-1}\pi_F^r)^{-h_{\sigma}}
=\prod_{\sigma \in \Gamma_F} \pi_F^{-r h_{\sigma}}=1.
$$
Hence we have 
$
\psi^r= \left(\prod_{\sigma \in \Gamma_F} \sigma^{-1} \circ \chi_{\pi_F}^{h_{\sigma}}\right)^r
$ on $G_F$. Now the result immediately follows.
\end{proof}

Now we are ready to prove Theorem \ref{refined:LT}.

\begin{proof}[Proof of Theorem \ref{refined:LT}]
(1) The assertion is an immediate consequence of \cite[Theorem 1.2]{Oze23}.
However, we include a proof here for the sake of completeness
(in fact,  arguments will be simpler since we only need to consider  abelian varieties here).
Assume that $k$ is a Galois extension of $\mbb{Q}_p$ and 
$k_{\pi}$ is not Kummer-faithful.
By Theorem \ref{KF:char1}, we know that 
(i) $k_{\pi}$ is not stably $\mu_{p^{\infty}}$-finite, or,
(ii) the group $A(L)[p^{\infty}]$ is infinite
for some finite extension $L/k_{\pi}$ 
and some abelian variety $A_{/L}$ with good reduction.
In the former case (i), we have 
$q^{-1}\mrm{Nr}_{k/\mbb{Q}_p}(\pi)\in \mu_{p-1}$. 
In the rest of the proof we assume the latter case (ii) and 
let $L/k_{\pi}$ and $A_{/L}$ 
be as in (ii).
Take a finite subextension  $K/k$ in  $L/k$ so that 
$L=Kk_{\pi}$, $A$ is defined over $K$ and  $A$ has good reduction over $K$.
Since  $L$ is a Galois extension of $K$ and the group $A(L)[p^{\infty}]$ is infinite, 
 $V:=V_p(A)^{G_L}$ is a non-zero 
$G_K$-stable submodule of the $p$-adic Tate module $V_p(A)$ of $A$.
Since  the $G_K$-action on $V$ factors through an abelian quotient, 
by taking a $p$-adic field $F$ large enough,
we have an isomorphism 
$$
(V\otimes_{\mbb{Q}_p} F)^{\mrm{ss}}\simeq F(\psi_1)\oplus F(\psi_2)\oplus 
\cdots \oplus F(\psi_{t})
$$ 
of $F[G_K]$-modules for some continuous characters $\psi_i\colon G_K\to F^{\times}$.
Here, ``ss" stands for the semi-simplification as a $F[G_K]$-module.
Each $\psi_i$ is crystalline since $A$ has good reduction
(see \cite[Section 6]{Fon82}; see also \cite[Theorem 1]{CI99}).
Furthermore, each $\psi_i$ factors through $\mrm{Gal}(L/K)$.
Replacing $L,K$ and $F$ by finite extensions,
by Lemma 2.5 of \cite{Oze23}, 
we find that 
$
\psi_i=\prod_{\sigma\in \Gamma_k} \sigma \circ \chi_{\pi}^{r_{i,\sigma}}
$
on $G_K$ for some $r_{i,\sigma}\in \{0,1 \}$
(here, we need the assumption that $k$ is a Galois extension of $\mbb{Q}_p$
to apply the lemma).
Thus  
\begin{equation}
\label{Phieq}
\Phi_K(\psi_i)
=\prod_{\sigma\in \Gamma_k} \sigma \Phi_K(\chi_{\pi})^{r_{i,\sigma}} 
=\left(\prod_{\sigma\in \Gamma_k} \sigma \pi^{r_{i,\sigma}}\right)^{f_{K/k}}.
\end{equation}
Now we set 
$$
f(T):=\det (T-\vphi^{f_K}\mid D^K_{\mrm{cris}}(V^{\vee}))
$$
where $f_{K}$ is the residue degree of $K/\mbb{Q}_p$ and 
``$\vee$" stands for the dual of $\mbb{Q}_p[G_K]$-modules.
We have 
$$
f(T)^{[F:\mbb{Q}_p]}
= \mrm{det}(T-\varphi^{f_K}\mid D^K_{\mrm{cris}}(V^{\vee} \otimes_{\mbb{Q}_p} F)) 
 = \prod^{t}_{i=1}\prod_{\sigma\in \Gamma_F} (T-\sigma \Phi_K(\psi_i))
$$
by Proposition \ref{Conrad} (2). 
The  polynomial  $f(T)$ 
is a divisor of $f_A(T):=\mrm{det}(T-\varphi^{f_K}\mid D^K_{\mrm{cris}}(V_p(A)^{\vee}))$
and it follows from $p$-adic Hodge theory (cf.\ \cite{Fal88} and \cite{CLS98}) 
that 
\begin{equation}
\label{pHT}
f_A(T)
=\mrm{det}(T-\varphi^{f_K}\mid D^K_{\mrm{cris}}
(H^1_{\aet}(A_{\overline{K}},\mbb{Q}_p)) 
= \mrm{det}(T-\mrm{Frob}^{-1}_{K}\mid 
H^1_{\aet}(A_{\overline{K}},\mbb{Q}_{\ell}) )
\end{equation}
for any prime $\ell\not=p$, where 
$\mrm{Frob}_{K}$ stands for the arithmetic Frobenius of $K$. 
By the Weil conjecture, 
we conclude that each $\Phi_K(\psi_i)$ is a Weil $q_K$-integer.
It follows from \eqref{Phieq} that 
$\prod_{\sigma\in \Gamma_k} \sigma \pi^{r_{i,\sigma}}$ is 
a Weil $q$-integer.

\if0
On the other hand, 
set 
$$
f(T):=\det (T-\vphi^{f_K}\mid D^K_{\mrm{cris}}(V))\quad \mrm{and}\quad 
f^{\vee}(T):=\det (T-\vphi^{f_K}\mid D^K_{\mrm{cris}}(V^{\vee}))
$$
where $f_{K}$ is the residue degree of $K/\mbb{Q}_p$ and 
``$\vee$" stands for the dual of $\mbb{Q}_p[G_K]$-modules.
We have 
$$
f(T)^{[F:\mbb{Q}_p]}
= \mrm{det}(T-\varphi^{f_K}\mid D^K_{\mrm{cris}}(V \otimes_{\mbb{Q}_p} F)) 
 = \prod^{t}_{i=1}\prod_{\sigma\in \Gamma_F} (T-\sigma \Phi_K(\psi_i)^{-1})
$$
by Proposition \ref{Conrad} (2). 
Since  the multi-set of the inverses of the roots of $f(T)$ coincides with 
the multi-set of the roots of $f^{\vee}(T)$, 
each $\Phi_K(\psi_i)$ is a root of 
the polynomial $f^{\vee}(T)$. 
The  polynomial  $f^{\vee}(T)$ 
is a divisor of $f_A^{\vee}(T):=\mrm{det}(T-\varphi^{f_K}\mid D^K_{\mrm{cris}}(V_p(A)^{\vee}))$
and it follows from $p$-adic Hodge theory (cf.\ \cite{Fal88} and \cite{CLS98}) 
that 
\begin{equation}
\label{pHT}
f_A^{\vee}(T)
=\mrm{det}(T-\varphi^{f_K}\mid D^K_{\mrm{cris}}
(H^1_{\aet}(A_{\overline{K}},\mbb{Q}_p)) 
= \mrm{det}(T-\mrm{Frob}^{-1}_{K}\mid 
H^1_{\aet}(A_{\overline{K}},\mbb{Q}_{\ell}) )
\end{equation}
for any prime $\ell\not=p$, where 
$\mrm{Frob}_{K}$ stands for the arithmetic Frobenius of $K$. 
By the Weil conjecture, 
we conclude that each $\Phi_K(\psi_i)$ is a Weil $q_K$-integer.
It follows from \eqref{Phieq} that 
$\prod_{\sigma\in \Gamma_k} \sigma \pi^{r_{i,\sigma}}$ is 
a Weil $q$-integer.
\fi

(2) If $q^{-1}\mrm{Nr}_{k/\mbb{Q}_p}(\pi)$ is a root of unity,
then $k_{\pi}$ is not torally-Kummer faithful by Proposition \ref{TKF} and thus it is not Kummer-faithful.
We consider the case where $\mrm{Nr}_{k/\mbb{Q}_p}(\pi)\in   W(q)$. It suffices to show that 
there exists a $p$-adic field $K$ and an abelian variety defined over $K$ 
which has  infinitely many $Kk_{\pi}$-rational $p$-power torsion points.
Let $\bar{A}$ be  the   simple abelian variety defined over $\mbb{F}_q$ 
(uniquely determined up to $\mbb{F}_q$-isogeny)
which corresponds to the Weil $q$-number  
$\mrm{Nr}_{k/\mbb{Q}_p}(\pi)$
via Honda-Tate theory (cf.\ Theorem \ref{Honda-Tate} (1)).
(Note that $\bar{A}$ is ordinary by the first Theorem of \cite[Section III]{MiWa71}.)
Moreover, by Honda-Tate lifting theorem (cf.\ Theorem \ref{Honda-Tate} (2)), 
there exists a finite extension $K/k$ 
and a CM abelian variety $B$ over $K$ with good reduction  
such that $\bar{A}$ is isogenous to the reduction  $\bar{B}$ of $B$
over the residue field of $K$. 
Put $g=\dim B$ and denote by $L$ the CM field of $B$
(so there exists an embedding from $L$ into $\mrm{End}_K(B)\otimes_{\mbb{Z}} \mbb{Q}$).
Let $\prod_i L_i$ denote the decomposition of $L\otimes_{\mbb{Q}} \mbb{Q}_p$
into a finite product of 
finite extensions of $\mbb{Q}_p$
(note that, a priori, 
each factor field $L_i$ does not live in the fixed algebraically closed field $\overline{\mbb{Q}}_p$).
Since  the $G_K$-action on $V_p(B)$  commutes with $L\otimes_{\mbb{Q}} \mbb{Q}_p$-action,
the above decomposition gives the decomposition 
$V_p(B)\simeq \oplus_i V_i$ of $G_K$-modules with the property that 
each $V_i$ is naturally equipped with a structure of one dimensional 
$L_i$-representation of $G_K$.
By choosing a $\mbb{Q}_p$-algebra embedding 
from $L_i$ into our fixed $\overline{\mbb{Q}}_p$ for each $i$,
we may regard $L_i$ as a subfield of $\overline{\mbb{Q}}_p$.
Denote by $\psi_i\colon G_K\to L_i^{\times}$
the character obtained by the $L_i$-linear $G_K$-action on $V_i$. 
Each $\psi_i$ is crystalline since $B$ has good reduction
(note that the $L_i$-representation $L_i(\psi_i)$ considered as a $\mbb{Q}_p$-representation 
is isomorphic to $V_i$, which is a $\mbb{Q}_p$-subrepresentation of $V_p(B)$).
It holds that 
\begin{equation}
\label{Vi}
\det (T-\vphi^{f_K}\mid D^K_{\mrm{cris}}(V_i^{\vee}))
=\prod_{\tau \in \Gamma_{L_i}} (T-\Phi_K(\tau\circ \psi_i))
\end{equation}
by Proposition \ref{Conrad} (2) and 
Lemma \ref{Phi} (3). 
Now we set 
$$
f_B(T):=\det (T-\vphi^{f_K}\mid D^K_{\mrm{cris}}(V_p(B)^{\vee})).
$$
Note that $f_B(T)$ is equal to  the characteristic polynomial 
$\mrm{det}(T-\mrm{Frob}_{K}\mid V_{\ell}(B))
=\mrm{det}(T-\mrm{Frob}_{K}\mid V_{\ell}(\bar{B}))$ for any prime $\ell\not=p$ (see \eqref{pHT}), 
which also coincides with the characteristic polynomial 
$\mrm{det}(T-\mrm{Frob}_{K}\mid V_{\ell}(\bar{A}))$.
Hence $\mrm{Nr}_{k/\mbb{Q}_p}(\pi)^{f_{K/k}}$ is a root of $f_B(T)$
by the choice of $\bar{A}$.
Since $f_B(T)$ is the product of the polynomial \eqref{Vi} over all $i$,  
we have an equality 
$\mrm{Nr}_{k/\mbb{Q}_p}(\pi)^{f_{K/k}}=\Phi_K(\tau\circ \psi_i)$ 
for some $i$ and some $\tau\in \Gamma_{L_i}$. 
Since $\mrm{Nr}_{k/\mbb{Q}_p}(\pi)$ is an element of $\mbb{Q}_p$, 
we have $\mrm{Nr}_{k/\mbb{Q}_p}(\pi)^{f_{K/k}}
=\Phi_K(\psi_i)$ by Lemma \ref{Phi} (3).
Note that 
$
 \mrm{Nr}_{k/\mbb{Q}_p}(\pi)^{f_{K/k}}
= \left(\Phi_k({\mrm{Nr}_{k/\mbb{Q}_p} \circ \chi_{\pi}})\right)^{f_{K/k}}
=\Phi_K({\mrm{Nr}_{k/\mbb{Q}_p} \circ \chi_{\pi}}) 
$
since $\Phi_k(\chi_{\pi})=\pi$.
Hence we obtain 
\begin{equation}
\label{PsiK}
\Phi_K(\psi_i^{-1}\cdot ({\mrm{Nr}_{k/\mbb{Q}_p} \circ \chi_{\pi}}))=1.
\end{equation}
On the other hand, replacing $K$ by a finite extension 
so that $K$ contains the Galois closure of $L_i/\mbb{Q}_p$, 
we find 
$\psi_i=\prod_{\sigma\in \Gamma_{L_i}} 
\sigma^{-1}\circ \chi_{\sigma L_i}^{r_{\sigma}}$
for some $r_{\sigma}\in \{0,1\}$
on $I_K$. 
Furthermore, we find 
$\mrm{Nr}_{k/\mbb{Q}_p}\circ \chi_{\pi}
=\prod_{\sigma\in \Gamma_{L_i}} \sigma^{-1}\circ \chi_{\sigma L_i}$
as characters from $I_K$ to $L_i^{\times}$.
Then we have an equality  
$\psi_i^{-1}\cdot ({\mrm{Nr}_{k/\mbb{Q}_p} \circ \chi_{\pi}})
=\prod_{\sigma\in \Gamma_{L_i}} \sigma^{-1}\circ \chi_{\sigma L_i}^{1-r_{\sigma}}$
on $I_K$ and this in particular implies that all the Hodge-Tate weights of 
$L_i(\psi_i^{-1}\cdot ({\mrm{Nr}_{k/\mbb{Q}_p} \circ \chi_{\pi}}))$ are non-negative. 
By \eqref{PsiK} and Lemma \ref{Phi} (5), we obtain 
$\psi_i={\mrm{Nr}_{k/\mbb{Q}_p} \circ \chi_{\pi}}$ on $G_K$.
Thus the Galois group ${G_{Kk_{\pi}}}$ acts on $V_i$ trivial.
This in particular implies that  
$V_p(B)^{G_{Kk_{\pi}}}$ is not zero, which is equivalent to say that 
$B(Kk_{\pi})[p^{\infty}]$ is infinite. 
Therefore, $k_{\pi}$ is not Kummer-faithful.

(3) 
First we consider the case where $\hat{\pi}^2=q$.
Calculating the $p$-adic valuation on both sides,
we have $2\sum_{\sigma\in \Gamma_k} r_{\sigma}=[k:\mbb{Q}_p]$.
On the other hand, taking the norm, we also have 
$\mrm{Nr}_{k/\mbb{Q}_p}(\hat{\pi})^2=q^{[k:\mbb{Q}_p]}$.
Since $k$ is a Galois extension of $\mbb{Q}_p$,
we have $\mrm{Nr}_{k/\mbb{Q}_p}(\hat{\pi})^2=\mrm{Nr}_{k/\mbb{Q}_p}(\pi)^{2\sum_{\sigma\in \Gamma_k} r_{\sigma}}$.
Hence we see that $q^{-1}\mrm{Nr}_{k/\mbb{Q}_p}(\pi)$ is a root of unity, and hence $k_{\pi}$ is not Kummer-faithful
by (2).

In the rest of the proof,
we assume $\hat{\pi}^2\neq q$.
Then $\mbb{Q}(\hat{\pi})$ can not be embedded into $\mbb{R}$.
Moreover, $\mbb{Q}(\hat{\pi})$  must be a CM field
since it is a totally imaginary quadratic extension of 
the totally real number field $\mbb{Q}(\hat{\pi}+q/\hat{\pi})$.
The proof below is based on the method used in the proof of (2) but we need more caferul treatments. 
The notable difference here is that we apply a CM lifting theorem due to 
Chai-Conrad-Oort \cite{CCO14}, which refines the Honda–Tate lifting theorem.
Let $\bar{A}$ be  the   simple abelian variety defined over $\mbb{F}_q$ 
which corresponds to the Weil $q$-number 
$\hat{\pi}$ via Honda-Tate theory.
We put $g=\dim \bar{A}$, 
$D=\mrm{End}_{\mbb{F}_q}(\bar{A})\otimes_{\mbb{Z}} \mbb{Q}$
and $Z=\mbb{Q}(\hat{\pi})$.
Since $\hat{\pi}$ is (a $\mbb{Q}$-conjugation of) the $q$-Frobenius map of $\bar{A}$,  $Z$ is a subfield of $D$.
It is a theorem of Tate (cf.\ Corollary 1.6.2.2 (3) of \cite{CCO14}) that 
$Z$ is a center of $D$, 
$D$ is a division algebra over $Z$ of degree $d^2$ for some integer $d>0$, and 
$2g=d\cdot [Z:\mbb{Q}]$.
Tate moreover showed that 
there exists a maximal subfield $L$ in $D$ of degree $d$ over $Z$
which is a CM field. 
Now we claim that $L=Z$.
For any field $F$, we denote by $\mrm{Br}(F)$  the Brauer group of $F$.
Let $[D]\in \mrm{Br}(Z)$ be the class of $D$. 
We denote by $\mrm{inv}_v\colon \mrm{Br}(Z_v)\overset{\sim }{\rightarrow} \mbb{Q}/\mbb{Z}$  the local invariant map of $\mrm{Br}(Z_v)$
for any finite place $v$ of $Z$. Here, $Z_v$ is the completion of $Z$ at $v$. 
It is known (cf.\ Corollary 1.6.2.2 (3) of \cite{CCO14}) that 
$\mrm{inv}_v([D])=0$ if $v$ is not above $p$ and 
$\mrm{inv}_v([D])=\frac{\mrm{ord}_v (\hat{\pi})f_v}{f} \mod \mbb{Z}$  
if $v$ is above $p$.
\if0
(here, $\mrm{ord}_v$ is the valuation associated with $v$ normalized by $\mrm{ord}_v(Z_v^{\times})=\mbb{Z}$ and 
$f_v$ is the residue degree of the extension $Z_v/\mbb{Q}_p$).
\fi
By the assumption 1 of (iii) and the fact that $Z$ has no real infinite place, 
we find that $[D]$ is trivial; in fact,
the local invariant maps induces an injection 
$\mrm{Br}(Z)\hookrightarrow \oplus_v \mrm{Br}(Z_v)$ 
where $v$ runs though all finite places of $Z$.
On the other hand, 
the order of $[D]$ in $\mrm{Br}(Z)$  coincides with 
the square root $d$ of $[D:Z]$ by Theorem 1.2.4.4 of \cite{CCO14}.
Thus we have $d=1$ and the claim follows.
Consequently, we obtain the fact that $\bar{A}$ has complex multiplication 
over $\mbb{F}_q$ by the CM field $L=Z$.

Let $\mbb{Q}_q$ be the unramified extension of $\mbb{Q}_p$ of degree $f$.
By Theorem 4.1.1 of \cite{CCO14} (and the constduction of ``$D$" in page 246--247 of {\it loc.\ cit.}), 
there exists a finite totally ramified extension $K/\mbb{Q}_q$ 
and an abelian variety $B$ over $K$ with good reduction  
such that $\bar{A}$ is $\mbb{F}_q$-isogenous to the reduction of $B$.
Moreover, we can take $B$ so that 
$B$ has complex multiplication over $K$ by the same 
choosed CM field as $\bar{A}$;
thus, we may suppose that $B$ has complex multiplication  by $L$.
Let $\prod_i L_i$ denote the decomposition of $L\otimes_{\mbb{Q}} \mbb{Q}_p$
into a finite product of finite extensions of $\mbb{Q}_p$
and let $V_p(B)\simeq \oplus_i V_i$ be the corresponding decomposition of $G_K$-modules.
Each $V_i$ is naturally equipped with a structure of one dimensional 
$L_i$-representation of $G_K$.
By choosing a $\mbb{Q}_p$-algebra embedding 
from $L_i$ into our fixed $\overline{\mbb{Q}}_p$ for each $i$,
we may regard $L_i$ as a subfield of $\overline{\mbb{Q}}_p$.
We denote by $\psi_i\colon G_K\to L_i^{\times}$
the character obtained by the $G_K$-action on $V_i$.
By a similar argument of the proof of (2),
we know that 
the set of the  roots of 
$\mrm{det}(T-\mrm{Frob}_{K}\mid V_{\ell}(\bar{A}))$ 
is $\{\Phi_K(\tau\circ \psi_i)\mid \tau \in \Gamma_{L_i}, i\}$.
Furthermore, $\hat{\pi}$ is also a root of this polynomial 
since $K$ is totally ramified
over $\mbb{Q}_q$. 
Thus we obtain 
\begin{equation}
\label{PsiK2}
\Phi_K( \tau \circ \psi_i) 
= \hat{\pi}
\end{equation}
for some $i$ and some $\mbb{Q}_p$-algebra embedding  $\tau\colon L_i\hookrightarrow \overline{\mbb{Q}}_p$.
Since the left hand side of \eqref{PsiK2} is contained in  $\tau L_i$, 
we have  $\tau L_i\supset \mbb{Q}_p(\hat{\pi})$.
In particular, we have 
$[L_i:\mbb{Q}_p]=n\cdot [\mbb{Q}_p(\hat{\pi}):\mbb{Q}_p]$ for some 
integer $n\ge 1$.
Since $[L_i:\mbb{Q}_p]$ coincides with the degree of some 
irreducible factor of $f(X)$ in $\mbb{Q}_p[X]$,
it follows from the assumption 2 of (iii) that  $n=1$, that is,
\begin{equation}
\label{tauL=k}
\tau L_i= \mbb{Q}_p(\hat{\pi}).
\end{equation}

Note that \eqref{PsiK2} implies 
$
\Phi_K( \tau \circ \psi_i) 
=\Phi_k\left(\prod_{\sigma \in \Gamma_k} \sigma\circ \chi_{\pi}^{r_{\sigma}}\right).
$
Taking $f_{Kk}$-th power of this equality,
we obtain 
$
\Phi_{Kk}((\tau \circ \psi_i)^{f_K}) 
=\Phi_{Kk}\left(\left(\prod_{\sigma \in \Gamma_k} \sigma\circ \chi_{\pi}^{r_{\sigma}}\right)^{f_k}\right).
$
Here we remark that, by \eqref{tauL=k}, we may consider $\tau \circ \psi_i$ as a crystalline character
of $G_K$ with values in $k^{\times}$. 
Combining this with  the assumption that $k$ is a Galois extension of $\mbb{Q}_p$ and the assumption (ii),
it follows from Lemma \ref{Phi} (6) that we have 
$$
(\tau\circ \psi_{i})^{f_K}=\prod_{\sigma \in \Gamma_k} \sigma\circ \chi_{\pi}^{m_{\sigma}}
$$
on $G_{K'}$ for some finite extension $K'$ of $Kk$  and some integers $m_{\sigma}$. 
Thus the character $\tau \circ \psi_i$ restricted to $G_{K'k_{\pi}}$ has values in the set of $f_K$-th roots of unity, 
and hence $\psi_i$ is trivial on $G_{K''k_{\pi}}$
for some finite extension $K''$ of $K'$,
which in particular implies that $B(K''k_{\pi})[p^{\infty}]$ is infinite.
Therefore, we conclude  that $k_{\pi}$ is not Kummer-faithful.
\end{proof}

\if0
\begin{remark}
It follows from the proof of Theorem \ref{refined:LT} (2) that,
if $\mrm{Nr}_{k/\mbb{Q}_p}(\pi)\in   W(q)$,
then there exists a finite extension $K/k$ and
 a simple CM abelian variety $A$ over $K$ with good ordinary reduction
such that $A(Kk_{\pi})[p^{\infty}]$ is finite. 
(Here, ``CM" here implies that  $\mrm{End}_K(A)\otimes_{\mbb{Z}} \mbb{Q}$
contains a number field of degree $2\dim A$.) 
\end{remark}
\fi

Theorem \ref{refined:LT} naturally leads us to consider the following question.
\begin{question}
Let $k$ be a Galois extension of $\mbb{Q}_p$. 
Are the following conditions equivalent?
\begin{itemize}
\item[(i)] $k_{\pi}$ is not Kummer-faithful.
\item[(ii)] Either of the following holds:
\begin{itemize}
\item[{\rm (a)}] $q^{-1}\mrm{Nr}_{k/\mbb{Q}_p}(\pi)\in \mu_{p-1}$.
\item[{\rm (b)}] For some $(r_{\sigma})_{\sigma \in \Gamma_k}$
with $r_{\sigma}\in \{0,1\}$, 
it holds $\hat{\pi}:=\prod_{\sigma \in \Gamma_k} \sigma \pi^{r_{\sigma}}\in W(q)$.
\end{itemize}
\end{itemize}
\end{question}
The implications (i) $\Rightarrow$ (ii) 
and (ii-a) $\Rightarrow$ (i)  were shown in  
Theorem \ref{refined:LT} (1) and Proposition \ref{TKF},
respectively.
Thus the remaining problem is to determine whether 
(ii-b) always implies (i). 
At the moment, the author does not have an answer for this problem.
Theorem \ref{refined:LT} (2) gives a partial answers for this;
if $\hat{\pi}$ is the norm $\mrm{Nr}_{k/\mbb{Q}_p}(\pi)$, 
then $\hat{\pi}\in W(q)$ implies (i). 
It is natural to ask whether there actually exists an example in which 
$k_{\pi}$ is not Kummer-faithful,
even though 
$q^{-1}\mrm{Nr}_{k/\mbb{Q}_p}(\pi)\not\in \mu_{p-1}$,
$\hat{\pi}\neq \mrm{Nr}_{k/\mbb{Q}_p}(\pi)$ and  $\hat{\pi}\in W(q)$.
By applying Theorem \ref{refined:LT} (3), we construct such examples in the next section.

\subsection{Examples of non-Kummer-faithful Lubin-Tate extension fields}
\label{nonKE:example}

Here we give an example of a non-Kummer-faithful Lubin-Tate extension field by applying Theorem \ref{refined:LT} (3).
Let $r$ be a divisor of $p-1$. 
Let $F_1,F_2,\dots ,F_r$ be imaginary quadratic fields such that 
$p$ splits completely by principal ideals for each $F_i$ and 
$\mrm{Gal}(F/\mbb{Q})\simeq 
\prod^r_{i=1}\mrm{Gal}(F_i/\mbb{Q})$,
where $F$ is the composite field of $F_1,F_2,\dots ,F_r$.
Let $\mfrak{p}$ be a finite place of $F$ above $p$.
Denote by $\mfrak{p}_i$ the finite place of $F_i$ below $\mfrak{p}$
and take a generator $\omega_i$ of $\mfrak{p}_i$.
Let $\omega_i^c$ be  the complex conjugate of $\omega_i$; 
we have $\omega_i^c=\omega_i^{-1}p$. 
We set 
$$
\pi_0:=\omega_1\cdot \prod^r_{i=2}\omega_i^c \quad 
\mrm{and}\quad \pi:=\pi_0^{1/r}.
$$
We fix an embedding 
$\overline{\mbb{Q}}\hookrightarrow \overline{\mbb{Q}}_p$
with respected to $\mfrak{p}$. 
With this embedding, $\omega_1$ is a uniformizer of $\mbb{Q}_p$
and $\omega_2^c,\dots, \omega_r^c$ are $p$-adic units.
Set $k:=\mbb{Q}_p(\pi)$.  
Then $k$ is a totally ramified  
extension of $\mbb{Q}_p$ of degree $r$
and $\pi$ is its uniformzer.
Since $r$ divides $p-1$, $k$ is a Galois extension of $\mbb{Q}_p$.
The minimal polynomial $f(X)$ of $\pi$ over $\mbb{Q}$
is a divisor of 
$\prod^r_{i=1}\prod_{\sigma_i\in \{1,c\}}
(X^r-\omega_1^{\sigma_1}\cdots \omega_r^{\sigma_r})$
and each $\omega_1^{\sigma_1}\cdots \omega_r^{\sigma_r}$
is of the form 
$p^a\omega_1^{\pm 1}\cdots \omega_r^{\pm 1}$ .
Hence any irreducible factor in $\mbb{Q}_p[X]$ of $f(X)$ 
is a divisor of some $X^r-\omega_1^{\sigma_1}\cdots \omega_r^{\sigma_r}$.
By Theorem \ref{refined:LT} (3),
we obtain that $k_{\pi}$ is not Kummer-faithful.

\begin{proposition}
\label{nonKF2}
Let $\mbb{Q}_p\subset k_0\subset k$ be finite extensions.
Let $\pi_0$ and $\pi$  be uniformizers of $k_0$ and $k$, respectively.
Denote by $k_{0,\pi_0}$ and $k_{\pi}$
the Lubin-Tate extensions of $k_0$ and $k$ associated with $\pi_0$ and $\pi$,
respectively.
Let $f$ be the residue degree of $k/k_0$.
If $\pi_0^{-f}\mrm{Nr}_{k/k_0}(\pi)$ is a root of unity 
and $k_{0,\pi_0}$ is not Kummer-faithful, then $k_{\pi}$ is not Kummer-faithful.
\end{proposition}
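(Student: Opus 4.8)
The plan is to prove, unconditionally, that $k_{0,\pi_0}$ is contained in a finite extension of $k_{\pi}$. Once this is established the proposition follows at once: a finite extension of a Kummer-faithful field is Kummer-faithful and a subfield of a Kummer-faithful field is Kummer-faithful, so Kummer-faithfulness of $k_{\pi}$ would force that of $k_{0,\pi_0}$, contrary to hypothesis. To set up, put $\zeta:=\pi_0^{-f}\mrm{Nr}_{k/k_0}(\pi)\in k_0^{\times}$, which is a root of unity by assumption, and let $m:=\mrm{ord}(\zeta)<\infty$. Since $f$ is the residue degree of $k/k_0$, the element $\mrm{Nr}_{k/k_0}(\pi)$ has normalized $k_0$-valuation $f$; hence $\zeta\in\cO_{k_0}^{\times}$ and $\mrm{Nr}_{k/k_0}(\pi^{m})=(\pi_0^{f}\zeta)^{m}=\pi_0^{fm}\in\pi_0^{\mbb{Z}}$.

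I would then run the argument through local class field theory in the form of the reciprocity isomorphism $\mrm{rec}_E\colon\widehat{E^{\times}}\xrightarrow{\sim}G_E^{\mrm{ab}}$ for a $p$-adic field $E$, using two standard facts: (i) for a uniformizer $\varpi$ of $E$, the Lubin-Tate extension $E_{\varpi}$ is the fixed field of $\mrm{rec}_E(\overline{\varpi^{\mbb{Z}}})$, where $\overline{\varpi^{\mbb{Z}}}$ is the closure of $\varpi^{\mbb{Z}}$ in $\widehat{E^{\times}}$; and (ii) for $k_0\subset k$ finite one has $k_0^{\mrm{ab}}\subset k^{\mrm{ab}}$ and the restriction map $G_k^{\mrm{ab}}\to G_{k_0}^{\mrm{ab}}$ corresponds under reciprocity to $\mrm{Nr}_{k/k_0}\colon\widehat{k^{\times}}\to\widehat{k_0^{\times}}$. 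First I would observe that the compositum $M:=k\cdot k_{0,\pi_0}\subset\overline{\mbb{Q}}_p$ is abelian over $k$: indeed $M/k$ is Galois and the restriction $\mrm{Gal}(M/k)\hookrightarrow\mrm{Gal}(k_{0,\pi_0}/k_0)$ is injective with abelian target. An automorphism in $G_k^{\mrm{ab}}$ fixes $M$ exactly when its image in $G_{k_0}^{\mrm{ab}}$ fixes $k_{0,\pi_0}$, so combining (ii) with (i) for $E=k_0$ gives $\mrm{rec}_k^{-1}(\mrm{Gal}(k^{\mrm{ab}}/M))=\mrm{Nr}_{k/k_0}^{-1}(\overline{\pi_0^{\mbb{Z}}})$ inside $\widehat{k^{\times}}$.

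Now the computation of the first paragraph shows $\pi^{m}\in\mrm{Nr}_{k/k_0}^{-1}(\overline{\pi_0^{\mbb{Z}}})$; since the right-hand side is a closed subgroup it contains $\overline{\pi^{m\mbb{Z}}}$, hence $\mrm{rec}_k(\overline{\pi^{m\mbb{Z}}})\subset\mrm{Gal}(k^{\mrm{ab}}/M)$, i.e.\ $M$ lies in the fixed field $k_{(m)}$ of $\mrm{rec}_k(\overline{\pi^{m\mbb{Z}}})$. Finally $\overline{\pi^{m\mbb{Z}}}\subset\overline{\pi^{\mbb{Z}}}$, and since these are the closures in $\widehat{k^{\times}}$ of subgroups of $\pi^{\mbb{Z}}\cong\mbb{Z}$ of index $m$ and $1$ respectively, the index $[\overline{\pi^{\mbb{Z}}}:\overline{\pi^{m\mbb{Z}}}]$ equals $m$. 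By (i) applied to $E=k$ and $\varpi=\pi$ this means $k_{\pi}\subset k_{(m)}$ with $[k_{(m)}:k_{\pi}]=m$. Therefore $k_{0,\pi_0}\subset M\subset k_{(m)}$, a finite extension of $k_{\pi}$, which is what we wanted.

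I do not expect a genuine obstacle here: the one substantive ingredient is the elementary identity $\mrm{Nr}_{k/k_0}(\pi^{m})\in\pi_0^{\mbb{Z}}$ with $m=\mrm{ord}(\zeta)$, and everything else is formal. The care needed is bookkeeping, namely interpreting the ``norm subgroup'' of the infinite abelian extension $k_{0,\pi_0}$ via the corresponding closed subgroup of the abelianized Galois group (rather than as a literal norm image) and keeping track of the inclusion-reversal between closed subgroups of $\widehat{k^{\times}}$ and abelian subextensions; note that facts (i) and (ii) are insensitive to the choice of normalization of the reciprocity map, so no sign conventions intervene.
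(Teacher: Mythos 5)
Your proposal is correct and is precisely the argument the paper intends: the paper's own proof is a one-line assertion that, by local class field theory, a finite extension of $k_{\pi}$ contains $k_{0,\pi_0}$, and you have simply supplied the details (norm computation $\mrm{Nr}_{k/k_0}(\pi^{m})=\pi_0^{fm}$, functoriality of the reciprocity map, and the identification of $k_{0,\pi_0}$ and $k_{\pi}$ with the fixed fields of the closures of $\pi_0^{\mbb{Z}}$ and $\pi^{\mbb{Z}}$). The concluding deduction via the stability of Kummer-faithfulness under finite extensions and passage to subfields matches the paper's remarks following Definition \ref{Def:KF}.
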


\begin{proof}
The result follows from local class field theory; 
if $\pi_0^{-f}\mrm{Nr}_{k/k_0}(\pi)$ is a root of unity,
then a finite extension of $k_{\pi}$ contains $k_{0,\pi_0}$.
\end{proof}

Let $k_0$ be a $p$-adic field and  
$\pi_0$ a uniformizer of $k_0$ so that 
$k_{0,\pi_0}$ is not Kummer-faithful. 
If we choose $k$ and $\pi$ by one of the following manner,
it follows from Proposition \ref{nonKF2} that 
$k_{\pi}$ is not Kummer-faithful.
\begin{itemize}
\item[(i)] Let $\pi$ be a $n$-th root of unity of $\pi_0$
for an integer $n>0$
and set $k:=k_0(\pi)$.
\item[(ii)] 
Let $k$ be a finite unramified extension of $k_0$
and take a uniformizer $\pi$ of $k$ such that 
$\pi_0^{-f}\mrm{Nr}_{k/k_0}(\pi)=1$.
(The existence of such $\pi$ is assured by, for example, \cite[Chapter V, \S.2, Corollary of Proposition 3]{Ser68}.)
\end{itemize}

\if0
It might be unrelated to the main topic of this paper,
we end this paper with a  remark that there exist  infinitely many $p$-adic Weil integers
at least in the case where $p$ is an odd prime.

\begin{remark}
Let $k$ be a $p$-adic field with residue field $\mbb{F}_q$.
We show that the set $W(q)\cap k$ is infinite if $p$ is an odd prime.
(The author suspect that  $W(q)\cap k$ is infinite also for the case where $p=2$
but now he does not have an idea how to check  it.)

It suffices to show that  $W(p)\cap \mbb{Q}_p$ is infinite 
since $\alpha^f\in W(q)$ if $\alpha\in W(p)$ where $q=p^f$. 
We claim that
 there exist infinitely many $\alpha\in \mbb{Q}_p$
which satisfy all the following properties:
\begin{itemize}
\item[(i)] $\alpha$ is a totally real algebraic integer.
\item[(ii)]$\iota(\alpha)^2<4p$
for every embedding $\iota\colon \mbb{Q}(\alpha)\hookrightarrow \mbb{R}$.
\item[(iii)] $\alpha\in \mbb{Z}_p^{\times}$.
\end{itemize}
The desired result immediately follows from this claim since  
the roots of the polynomial $T^2-\alpha T+p=0$  for $\alpha$ as above 
are elements of $W(p)\cap \mbb{Q}_p$.
We say that an integer $n$ not divisible by $p$ is a quadratic residue (resp.\   quadratic non-residue) 
if $n \! \mod p \in (\mbb{F}_p^{\times})^2$ (resp.\ $n \! \mod p\notin (\mbb{F}_p^{\times})^2$).
Let $n_p>1$ be the least quadratic non-residue.
We check that the inequality $1<n_p<\sqrt{p}$ holds if $p\equiv 1\! \mod 4$.
Assume $\sqrt{p}<n_p$ and write $p=sn_p+r$ with $0\le r< n_p$.
Since $p$ is a prime, we have $r\not=0$.
Since $(s+1)n_p\equiv n_p-r \! \mod p$ and $n_p-r$ is a quadratic residue, 
we know that $s+1$ is a quadratic non-residue,
which implies $n_p\le s+1$ and thus $p-n_p< n_p^2-n_p \le (s+1)n_p-n_p=p-r\le p-1$. 
Thus we obtain 
\begin{equation}
\label{np}
1\le  p-n_p(n_p-1)<n_p
\end{equation}
Here, $n_p-1$  is a quadratic residue by definition of $n_p$ 
and so is $-1$ since $p\equiv 1\! \mod 4$.  
Thus $p-n_p(n_p-1)$ must be a quadratic non-residue
but this and \eqref{np} contradicts  the fact that $n_p$ is the least quadratic non-residue.

Let us return the proof of the claim above. We define an integer $m$ 
by $m=-1$ if $p\equiv 3\! \mod 4$
and $m=n_p$ as above  if $p\equiv 1\! \mod 4$. 
Note that $m$ is not a square element of $\mbb{Z}_p^{\times}$
and $|m|<\sqrt{p}$. 
We construct a sequence 
$(\alpha_n)_{n\ge 0}$ of $p$-adic units $\alpha_n\in \mbb{Z}_p^{\times}$ inductively as follows.
At first, we set $\alpha_0:=4$. 
Suppose we have defined $\alpha_{n-1}\in \mbb{Z}_p^{\times}$ for $n\ge 1$.
Since $m$ is not a square element of $\mbb{Z}_p^{\times}$,
we find that either $2p+\alpha_{n-1}$ or $2p+m\alpha_{n-1}$ is a square element of $\mbb{Z}_p^{\times}$,
equivalently, either the equations $T^2-(2p+\alpha_{n-1})=0$ or $T^2-(2p+m\alpha_{n-1})=0$ has roots in $\mbb{Z}_p^{\times}$.
Let $\alpha_n\in \mbb{Z}_p^{\times}$ be as one of such roots. 
We show that  both (i) and (ii) are satisfied with $\alpha=\alpha_n$ constructed above for $n\ge 0$ (resp.\ $n\ge 1$) 
if $p\not=3$ (resp.\ $p=3$) by induction.
The assertions are clear for $n=0$ (resp.\ $n=1$) if $p\not=3$ (resp.\ $p=3$).
Suppose that  both (i) and (ii) with $\alpha=\alpha_{n-1}$ hold. 
Choose $m'\in \{1,m\}$ so that $\alpha_n^2=2p+m'\alpha_{n-1}$.
For any embedding $\iota\colon \overline{\mbb{Q}}\hookrightarrow \mbb{C}$, 
we have $\iota(\alpha_n)^2=2p+m'\iota(\alpha_{n-1})\in \mbb{R}$ and 
$\iota(\alpha_n)^2\ge 2p-|m'||\iota(\alpha_{n-1})|> 2p-\sqrt{p}\cdot 2\sqrt{p}=0$.
Furthermore, we have $\iota(\alpha_n)^2\le 2p+|m'||\iota(\alpha_{n-1})|<  2p+\sqrt{p}\cdot 2\sqrt{p}=4p$,
On the other hand,  $\alpha_n$ is an algebraic integer by definition.
Thus (i) and (ii) with $\alpha=\alpha_n$ follows. 
Finally, to finish the proof of the claim,  we should check that $\alpha_i\not=\alpha_j$ for $i\not=j$.
By construction, $\alpha_i$ is a root of a degree $2^i$-polynomial of the form
$$
(\cdots ((T^2-2p)^2-2p)^2-\cdots -2p)^2-(2p+4),
$$
which is irreducible over $\mbb{Q}$ (since this is an Eisenstein polynomial for the prime $2$).
In particular, we have $[\mbb{Q}(\alpha_i):\mbb{Q}]=2^i$ and thus 
$\alpha_i\not=\alpha_j$ for $i\not=j$ as desired.
\end{remark}
\fi


\begin{thebibliography}{Mur23b}

\bibitem[AT25]{AsTg24}
Takuya Asayama and Yuichiro Taguchi.
\newblock Mordell-{W}eil groups over large algebraic extensions of fields of
  characteristic zero.
\newblock {\em Res. Number Theory}, 11(4):Paper No. 89, 18, 2025.

\bibitem[BL91]{Bos91}
Siegfried Bosch and Werner L\"utkebohmert.
\newblock Degenerating abelian varieties.
\newblock {\em Topology}, 30(4):653--698, 1991.

\bibitem[Bri17]{Bri17}
Michel Brion.
\newblock Commutative algebraic groups up to isogeny.
\newblock {\em Doc. Math.}, 22:679--725, 2017.

\bibitem[BX96]{BoXa96}
Siegfried Bosch and Xavier Xarles.
\newblock Component groups of {N}\'eron models via rigid uniformization.
\newblock {\em Math. Ann.}, 306(3):459--486, 1996.

\bibitem[CCO14]{CCO14}
Ching-Li Chai, Brian Conrad, and Frans Oort.
\newblock {\em Complex multiplication and lifting problems}, volume 195 of {\em
  Mathematical Surveys and Monographs}.
\newblock American Mathematical Society, Providence, RI, 2014.

\bibitem[CI99]{CI99}
Robert Coleman and Adrian Iovita.
\newblock The {F}robenius and monodromy operators for curves and abelian
  varieties.
\newblock {\em Duke Math. J.}, 97(1):171--215, 1999.

\bibitem[CLS98]{CLS98}
Bruno Chiarellotto and Bernard Le~Stum.
\newblock Sur la puret\'e{} de la cohomologie cristalline.
\newblock {\em C. R. Acad. Sci. Paris S\'er. I Math.}, 326(8):961--963, 1998.

\bibitem[Con11]{Con11}
Brian Conrad.
\newblock Lifting global representations with local properties.
\newblock {\em preprint, available at https://math.stanford.edu/\textasciitilde
  conrad/}, 2011.

\bibitem[Fal89]{Fal88}
Gerd Faltings.
\newblock Crystalline cohomology and {$p$}-adic {G}alois-representations.
\newblock In {\em Algebraic analysis, geometry, and number theory ({B}altimore,
  {MD}, 1988)}, pages 25--80. Johns Hopkins Univ. Press, Baltimore, MD, 1989.

\bibitem[Fon82]{Fon82}
Jean-Marc Fontaine.
\newblock Sur certains types de repr\'esentations {$p$}-adiques du groupe de
  {G}alois d'un corps local;\ construction d'un anneau de {B}arsotti-{T}ate.
\newblock {\em Ann. of Math. (2)}, 115(3):529--577, 1982.

\bibitem[Fon94a]{Fon94a}
Jean-Marc Fontaine.
\newblock Le corps des p\'eriodes {$p$}-adiques.
\newblock Number 223, pages 59--111. 1994.
\newblock With an appendix by Pierre Colmez, P\'eriodes $p$-adiques
  (Bures-sur-Yvette, 1988).

\bibitem[Fon94b]{Fon94b}
Jean-Marc Fontaine.
\newblock Repr\'esentations {$p$}-adiques semi-stables.
\newblock Number 223, pages 113--184. 1994.
\newblock With an appendix by Pierre Colmez, P\'eriodes $p$-adiques
  (Bures-sur-Yvette, 1988).

\bibitem[Fuc15]{Fuc15}
L\'aszl\'o Fuchs.
\newblock {\em Abelian groups}.
\newblock Springer Monographs in Mathematics. Springer, Cham, 2015.

\bibitem[HMT25]{HoMoTs20}
Yuichiro Hoshi, Shinichi Mochizuki, and Shota Tsujimura.
\newblock Combinatorial construction of the absolute {G}alois group of the
  field of rational numbers.
\newblock {\em J. Math. Sci. Univ. Tokyo}, 32(1):1--125, 2025.

\bibitem[Hos17]{Hos17}
Yuichiro Hoshi.
\newblock On the {G}rothendieck conjecture for affine hyperbolic curves over
  {K}ummer-faithful fields.
\newblock {\em Kyushu J. Math.}, 71(1):1--29, 2017.

\bibitem[Ima75]{Ima75}
Hideo Imai.
\newblock A remark on the rational points of abelian varieties with values in
  cyclotomic {$Z\sb{p}$}-extensions.
\newblock {\em Proc. Japan Acad.}, 51:12--16, 1975.

\bibitem[Iwa55]{Iwa55}
Kenkichi Iwasawa.
\newblock On {G}alois groups of local fields.
\newblock {\em Trans. Amer. Math. Soc.}, 80:448--469, 1955.

\bibitem[KL81]{KaLa81}
Nicholas~M. Katz and Serge Lang.
\newblock Finiteness theorems in geometric classfield theory.
\newblock {\em Enseign. Math. (2)}, 27(3-4):285--319 (1982), 1981.
\newblock With an appendix by Kenneth A. Ribet.

\bibitem[KT13]{KbTg13}
Yusuke Kubo and Yuichiro Taguchi.
\newblock A generalization of a theorem of {I}mai and its applications to
  {I}wasawa theory.
\newblock {\em Math. Z.}, 275(3-4):1181--1195, 2013.

\bibitem[Lan94]{Lan94}
Serge Lang.
\newblock {\em Algebraic number theory}, volume 110 of {\em Graduate Texts in
  Mathematics}.
\newblock Springer-Verlag, New York, second edition, 1994.

\bibitem[Mat55]{Mat55}
Arthur Mattuck.
\newblock Abelian varieties over {$p$}-adic ground fields.
\newblock {\em Ann. of Math. (2)}, 62:92--119, 1955.

\bibitem[MO25]{MrOz25}
Takahiro Murotani and Yoshiyasu Ozeki.
\newblock Some {K}ummer extensions over maximal cyclotomic fields, a finiteness
  theorem of {R}ibet and {TKND}-{AVKF} {F}ields.
\newblock {\em Int. Math. Res. Not. IMRN}, (14):Paper No. rnaf209, 20, 2025.

\bibitem[Moc96]{Moc96}
Shinichi Mochizuki.
\newblock The profinite {G}rothendieck conjecture for closed hyperbolic curves
  over number fields.
\newblock {\em J. Math. Sci. Univ. Tokyo}, 3(3):571--627, 1996.

\bibitem[Moc07]{Moc07}
Shinichi Mochizuki.
\newblock Absolute anabelian cuspidalizations of proper hyperbolic curves.
\newblock {\em J. Math. Kyoto Univ.}, 47(3):451--539, 2007.

\bibitem[Moc15]{Moc15}
Shinichi Mochizuki.
\newblock Topics in absolute anabelian geometry {III}: global reconstruction
  algorithms.
\newblock {\em J. Math. Sci. Univ. Tokyo}, 22(4):939--1156, 2015.

\bibitem[Mur23a]{Mur23b}
Takahiro Murotani.
\newblock Anabelian properties of infinite algebraic extensions of finite
  fields, 2023.

\bibitem[Mur23b]{Mur23a}
Takahiro Murotani.
\newblock A study on anabelian geometry of higher local fields.
\newblock {\em Int. J. Number Theory}, 19(6):1229--1248, 2023.

\bibitem[Nak90a]{Nak90b}
Hiroaki Nakamura.
\newblock Galois rigidity of the \'etale fundamental groups of punctured
  projective lines.
\newblock {\em J. Reine Angew. Math.}, 411:205--216, 1990.

\bibitem[Nak90b]{Nak90a}
Hiroaki Nakamura.
\newblock Rigidity of the arithmetic fundamental group of a punctured
  projective line.
\newblock {\em J. Reine Angew. Math.}, 405:117--130, 1990.

\bibitem[Oht22]{Oht22}
Sachiko Ohtani.
\newblock Kummer-faithful fields which are not sub-{$p$}-adic.
\newblock {\em Res. Number Theory}, 8(1):Paper No. 15, 7, 2022.

\bibitem[OT22]{OzTg22}
Yoshiyasu Ozeki and Yuichiro Taguchi.
\newblock A note on highly {K}ummer-faithful fields.
\newblock {\em Kodai Math. J.}, 45(1):49--64, 2022.

\bibitem[Oze23]{Oze23}
Yoshiyasu Ozeki.
\newblock Torsion of algebraic groups and iterate extensions associated with
  {L}ubin-{T}ate formal groups.
\newblock {\em J. Math. Soc. Japan}, 75(2):735--759, 2023.

\bibitem[Ray71]{Ray71}
Michel Raynaud.
\newblock Vari\'et\'es ab\'eliennes et g\'eom\'etrie rigide.
\newblock In {\em Actes du {C}ongr\`es {I}nternational des {M}ath\'ematiciens
  ({N}ice, 1970), {T}ome 1}, pages 473--477. Gauthier-Villars \'Editeur, Paris,
  1971.

\bibitem[Ser68]{Ser68}
Jean-Pierre Serre.
\newblock {\em Corps locaux}, volume No. VIII of {\em Publications de
  l'Universit\'e{} de Nancago}.
\newblock Hermann, Paris, 1968.
\newblock Deuxi\`eme \'edition.

\bibitem[Ser97]{Ser97}
Jean-Pierre Serre.
\newblock {\em Galois cohomology}.
\newblock Springer-Verlag, Berlin, 1997.
\newblock Translated from the French by Patrick Ion and revised by the author.

\bibitem[Ser98]{Ser98}
Jean-Pierre Serre.
\newblock {\em Abelian {$l$}-adic representations and elliptic curves},
  volume~7 of {\em Research Notes in Mathematics}.
\newblock A K Peters, Ltd., Wellesley, MA, 1998.
\newblock With the collaboration of Willem Kuyk and John Labute, Revised
  reprint of the 1968 original.

\bibitem[SL97]{ScLo97}
Leila Schneps and Pierre Lochak, editors.
\newblock {\em Geometric {G}alois actions. 1}, volume 242 of {\em London
  Mathematical Society Lecture Note Series}.
\newblock Cambridge University Press, Cambridge, 1997.
\newblock Around Grothendieck's ``Esquisse d'un programme''.

\bibitem[Sti02a]{Sti02a}
Jakob Stix.
\newblock Affine anabelian curves in positive characteristic.
\newblock {\em Compositio Math.}, 134(1):75--85, 2002.

\bibitem[Sti02b]{Sti02b}
Jakob Stix.
\newblock {\em Projective anabelian curves in positive characteristic and
  descent theory for log-\'etale covers}, volume 354 of {\em Bonner
  Mathematische Schriften [Bonn Mathematical Publications]}.
\newblock Universit\"at Bonn, Mathematisches Institut, Bonn, 2002.
\newblock Dissertation, Rheinische Friedrich-Wilhelms-Universit\"at Bonn, Bonn,
  2002.

\bibitem[Tam97]{Tam97}
Akio Tamagawa.
\newblock The {G}rothendieck conjecture for affine curves.
\newblock {\em Compositio Math.}, 109(2):135--194, 1997.

\bibitem[Tsu23]{Tsu23}
Shota Tsujimura.
\newblock Birational anabelian {G}rothendieck conjecture for curves over
  arbitrary cyclotomic extension fields of number fields.
\newblock {\em Adv. Math.}, 435:Paper No. 109380, 41, 2023.

\bibitem[WM71]{MiWa71}
W.~C. Waterhouse and J.~S. Milne.
\newblock Abelian varieties over finite fields.
\newblock In {\em 1969 {N}umber {T}heory {I}nstitute ({P}roc. {S}ympos. {P}ure
  {M}ath., {V}ol. {XX}, {S}tate {U}niv. {N}ew {Y}ork, {S}tony {B}rook,
  {N}.{Y}., 1969)}, volume Vol. XX of {\em Proc. Sympos. Pure Math.}, pages
  53--64. Amer. Math. Soc., Providence, RI, 1971.

\end{thebibliography}
\end{document}